\pdfoutput=1
\documentclass[a4paper,12pt]{amsart}

\usepackage{amsthm}
\usepackage{amsmath}
\usepackage{amssymb}
\usepackage{latexsym}
\usepackage{enumerate}

\def\cprime{$'$}


\usepackage{graphicx}

\newtheorem{theoremA}{Theorem}

\renewcommand{\thetheoremName}

\newtheorem{theorem}{Theorem}[section]
\newtheorem{lemma}[theorem]{Lemma}

\newtheorem{proposition}[theorem]{Proposition}
\newtheorem{corollary}[theorem]{Corollary}

\theoremstyle{definition}
\newtheorem{definition}[theorem]{Definition}
\newtheorem{example}[theorem]{Example}

\newtheorem{remark}[theorem]{Remark}

\numberwithin{equation}{section}


\newcommand{\Hess}{\operatorname{Hess}}
\newcommand{\dist}{\operatorname{dist}}
\newcommand{\Vol}{\operatorname{Vol}}

\newcommand{\Div}{\operatorname{div}}
\newcommand{\C}{\operatorname{Cap}}

\newcommand{\erre}{\mathbb{R}}

\newcommand{\tN}{\widetilde{N}}

\newcommand{\LL}{\operatorname{L}}
\newcommand{\Len}{\operatorname{Length}}
\newcommand{\A}{\operatorname{Area}}


\begin{document}

\title[Isoperimetric Analysis]{Extrinsic Isoperimetric Analysis \\ on Submanifolds \\ with Curvatures bounded from below}

\author[S. Markvorsen]{Steen Markvorsen$^{\#}$}
\address{Department of Mathematics, Technical University of Denmark.}
\email{S.Markvorsen@mat.dtu.dk}
\thanks{$^{\#}$ Work partially supported by
the Danish Natural Science Research Council and DGI grant
MTM2004-06015-C02-02.}

\author[V. Palmer]{Vicente Palmer*}
\address{Departament de Matem\`{a}tiques, Universitat Jaume I, Castellon,
Spain.}
\email{palmer@mat.uji.es}
\thanks{* Work partially supported by
the Caixa Castell\'{o} Foundation, DGI grant MTM2004-06015-C02-02,
and by the Danish Natural Science Research Council.}

\subjclass[2000]{Primary 53C42, 58J65, 35J25,
60J65}


\keywords{Submanifolds, extrinsic balls, radial
convexity, radial tangency, mean exit time,
isoperimetric inequalities, volume bounds,
parabolicity}


\begin{abstract}
We obtain upper bounds for the isoperimetric quotients of extrinsic balls of submanifolds
in ambient spaces which have a lower bound on their {\em{radial sectional curvatures}}.
The submanifolds are themselves only assumed to have lower bounds on
the radial part of the mean curvature vector field and on the radial part
of the intrinsic unit normals at the boundaries of
the extrinsic spheres, respectively. In the same vein
we also establish {\em{lower bounds}} on the {\em{mean exit time}} for Brownian motions in the
extrinsic balls, i.e. lower bounds for the time it takes (on average) for Brownian particles
to diffuse {\em{within}} the extrinsic ball from a given starting point before they hit the
boundary of the extrinsic ball. In those cases, where we may extend our analysis
to hold all the way to infinity, we apply a {\em{capacity comparison}} technique to obtain a sufficient
condition for the submanifolds to be {\em{parabolic}}, i.e. a condition which will guarantee that
any Brownian particle, which is free to move around in the whole submanifold, is bound to eventually
revisit any given neighborhood
of its starting point with probability $1$.
The results of this paper are in a rough sense {\em{dual}} to similar results obtained
previously by the present authors in complementary settings where we assume that the curvatures
are bounded from {\em{above}}.
\end{abstract}


\maketitle


\section{Introduction}  \label{secIntro}
Given a precompact domain $\Omega$ in a Riemannian manifold $M$, the isoperimetric
quotient for $\Omega$ measures
the ratio between the volume of the boundary and the volume of the
enclosed domain: $\mathcal{Q}(\Omega) \, = \, \Vol(\partial \Omega)/\Vol(\Omega)$.
These volume measures and this quotient
are descriptors of fundamental importance for obtaining geometric and a\-na\-ly\-tic information
about the manifold $M$. In fact, this holds true on every zoom level, be it global, local, or micro-local.

\subsection{On the global level} \label{subsecGlobal} A classical quest is
to find necessary and sufficient conditions for the {\em{type}} of a given manifold:
Is it hyperbolic or parabolic? As already alluded to in the abstract, parabolicity is a first measure of the relative
smallness of the boundary at infinity of the manifold: The Brownian particles are bound to eventually
return to any given neighborhood of their starting point - as in $\mathbb{R}^{2}$; they do not get lost at infinity as they do in $\mathbb{R}^{3}$ (which the simplest example of a transient manifold). \\
In \cite{LS} T. Lyons and D. Sullivan collected and proved a
number of equivalent conditions (the so-called Kelvin--Nevanlinna--Royden criteria) for non-parabolicity, i.e. hyperbolicity:
The Riemannian manifold $\,(M, g)\,$ is hyperbolic if one (thence all) of the following equivalent conditions are satisfied: (a) $M$ has {\em finite resistance} to infinity, (b)
$M$ has {\em positive capacity}, (c)
$M$ admits a {\em{Green's function}}, (d)
There exists a precompact open domain $\Omega$, such that
the Brownian motion starting from $\Omega$ does
{\em{not return}} to $\Omega$ with probability $1$, (e)
$M$ admits a {\em{square integrable vector field}} with finite, but non-zero, absolute divergence. In particular the capacity condition (b) implies that  $\,(M, g)\,$ is parabolic if it has vanishing capacity - a condition which we will apply in section
\ref{secCapAnalysis}. \\
Returning now to the role of isoperimetric information: J. L. Fernandez showed in \cite{F}
that $M$ is hyperbolic if the so-called (rooted) isoperimetric profile function  $\phi(t)\,$  has a quare integrable
reciprocal, i.e. $\,\int^{\Vol(M)}\phi^{-2}(t)\,dt \, < \, \infty\,$. Here the $\Omega_{0}-$rooted profile function is molded directly from isoperimetric information as follows:
\begin{equation*}
\begin{aligned}
\phi(t) \, = \,  \operatorname{inf}
 \{&\Vol(\partial\Omega)\,
\, | \\
 \, \, &\Omega \, \, \text{is a smooth relatively compact domain in }
 M \, \, ,\\
       &\Omega \supset \Omega_{0}\, \, , \quad \text{and} \, \,
        \operatorname{Vol}(\Omega) \, \geq \, t \, \, \} \quad .
\end{aligned}
\end{equation*}
The volume of a non-parabolic manifold $M$ is necessarily infinite.
In fact, a finite volume manifold is parabolic by the following theorem due
to Grigor'yan, Karp, Lyons and Sullivan, and Varopoulos.
See \cite{Gri} section 7.2 for an account of this type of
results, which again is stated in terms of the simplest possible 'isoperimetric' information:
Let $B_{r}(q)$ denote the geodesic ball centered at $q$ in
$M$ and with radius $r$.  If there exists a  point $q$ such that one (or both) of the
following conditions is satisfied
\begin{equation*}
\begin{aligned}
\int^{\infty} \frac{r}{\Vol(B_{r}(q))} \, dr  \, &= \, \infty \\
\int^{\infty} \frac{1}{\Vol(\partial B_{r}(q))} \, dr  \, &= \, \infty \quad ,
\end{aligned}
\end{equation*}
then $M$ is parabolic.\\
In the present paper we obtain generalizations of this parabolicity condition. They are obtained for submanifolds in ambient spaces with a {\em{lower bound}} on curvatures by using a capacity comparison technique in combination with the Kelvin--Nevanlinna--Royden condition (b) as stated above, see Proposition \ref{propBallCond}
and Theorem \ref{thmSphereCondGen}.
These results complement - and are in a rough sense dual to - previous hyperbolicity results that we have obtained using a corresponding {\em{upper bound}} on the curvatures of the ambient spaces, see \cite{MP2, MP3}.

\subsection{On the local level} If the boundary of a given domain is relatively small as compared
to the domain itself, we also expect the mean exit time for Brownian motion to be
correspondingly larger. The main concern of the present paper is to show an upper bound on the isoperimetric quotients
of the so-called extrinsic balls of submanifolds under the essential assumption that the ambient spaces have
a {\em{lower bound}} on their curvatures, see Theorem \ref{thmIsopGeneral}. The result for the mean exit time (from such extrinsic balls) then follows, as observed and proved in Theorem \ref{thmExitTime}.
These results are again dual to results which have been previously obtained under
the condition of an upper bound on the curvature of the ambient space, see  \cite{Pa2, MP1} and \cite{Ma1, Pa1}\,,  respectively.

\subsection{On the micro-local level} When considering again the intrinsic geodesic balls $B_{\varepsilon}(q)$ of $M$ centered at
a fixed point $q$ and assuming that the radius
$\varepsilon$ is approaching $0$,  then the Taylor series expansion of the
volume function of the corresponding
metric ball (or metric sphere) contains information about the
curvatures of $M$ at $q$ - a classical result (for surfaces) obtained by Gauss and
developed by A. Gray and L. Vanhecke in their seminal work \cite{MR521460}. Moreover,
the geodesic metric balls have been analyzed by A. Gray and M. Pinsky in \cite{GraP} in order to extract the
geometric information contained in that particular  function of $\varepsilon\,$, which gives the mean exit time from the center point $q$ of the metric ball $B_{\varepsilon}(q)$, see Theorem \ref{thmGraP} below in section \ref{secExitTime}. \\
To motivate even further the {\em{extrinsic geometric}} setting under consideration in the present paper, we mention here also yet another nice observation due to L. Karp and M. Pinsky concerning submanifolds in $\mathbb{R}^{n}\,$; see \cite{KP, KPextrin}\,, where they show how to extract combinations of the principal curvatures of a submanifold at a point $q$ from suitable power series expansions of the respective functions $\Vol(D_{\varepsilon}(q))$, $\Vol(\partial D_{\varepsilon}(q))$, and $E_{D_{\varepsilon}(q)}$, where $D_{\varepsilon}(q)$ is the extrinsic ball of (extrinsic) radius $\varepsilon$ centered at $q$, and $E$ is the mean exit time function.\\

On all levels then, be it global, local, or micro-local, as well as from both viewpoints, intrinsic or extrinsic, we thus encounter a fundamental interplay and inter-de\-pen\-den\-ce between the highly instrumental geometric concepts of measure, shape, and diffusion considered here, namely the notions of volume, curvature, and exit time, respectively. \\


\subsection{Outline of the paper} \label{subsecOutline}
In the first three sections \ref{secFirstGlimpse}, \ref{secPrevRes}, and  \ref{secExampSurfOfRev},
we first provide intuitive versions, shadows, of our main results, i.e. we
present  the general results under stronger conditions than actually needed and compare them in particular with
previous techniques for obtaining  parabolicity (for surfaces of revolution) due to J. Milnor and K. Ichihara.
In section \ref{secGeoBounds} we then begin to establish the technical machinery for the paper and give
precise definitions of the geometric bounds needed as preparation for our definition of what we call an
Isoperimetric Constellation in section \ref{secIsopConstellation}. This key notion is then applied in section
\ref{secMainIsopRes} to present and prove our
main isoperimetric result, Theorem \ref{thmIsopGeneral}.
Various consequences of the main result and its proof are shown in sections \ref{secConseq} and \ref{secIntrinsic}.
In particular we find new inequalities involving the volumes of extrinsic balls
and their derivatives as well as
intrinsic versions of our main results.
In section \ref{secExitTime} we find a lower bound on the mean exit time function from extrinsic balls,
and establish in section \ref{secCapAnalysis} an inequality for the capacities of extrinsic annular domains,
which is then  finally applied to prove the parabolicity result, Theorem \ref{thmSphereCondGen}, as
alluded to above.


\section{A first Glimpse of the Main Results} \label{secFirstGlimpse}
We first facilitate intuition concerning our main results by considering some of their consequences for submanifolds in
constant curvature ambient spaces - in particular for surfaces in $\mathbb{R}^{3}$. This seems quite relevant and worthwhile,
because even in these strongly restricted settings we find results, which we believe are of independent interest. The results presented here are but shadows of the general results. The full versions of the main theorems
 appear in the sections below as indicated in the Outline, section \ref{subsecOutline}.\\

\subsection{Strong Assumptions and Constant Curvature} \label{subsecStrong}
The general strong conditions applied for these initial statements are as follows:
We let $P^{m}$ denote a complete immersed submanifold in an ambient
space form $N^{n} \, = \, \mathbb{K}^{n}_{b}$ with constant sectional curvature $b \leq 0$.
Suppose further that $P$ is {\em{radially mean $C-$convex}} in $N$ as viewed from a point $p$ in
the following sense: The unique oriented, arc length parametrized geodesic $\,\gamma_{p \to x}\,$ from $p$ to $x \in P$
in the ambient space $\mathbb{K}^{n}_{b}$ has an inner product with the mean curvature vector $H_{P}(x)$ of
$P$ in $N$ at $x$, which is bounded as follows:
\begin{equation}
\mathcal{C}(x)\, = \, -\langle\, \gamma'_{p \to x\,} \, , \, H_{P}(x)\, \rangle \, \geq \, C
\end{equation}
for some constant $C$.
This condition with $C \, = \, 0$ is e.g. satisfied by {\em{convex}} hypersurfaces, see remark \ref{remHypCon}, as well as by all {\em{minimal}} submanifolds.
We then consider a special type of compact subsets of $P$, the so-called extrinsic balls $D_{R}$, which for any given $R\, > \, 0$ consists of those
points $x$ in $P$, which have extrinsic distance to $p$ less than or equal to $R$, see Figure 1.\\

\begin{figure}[tb]
\includegraphics[width=40mm]{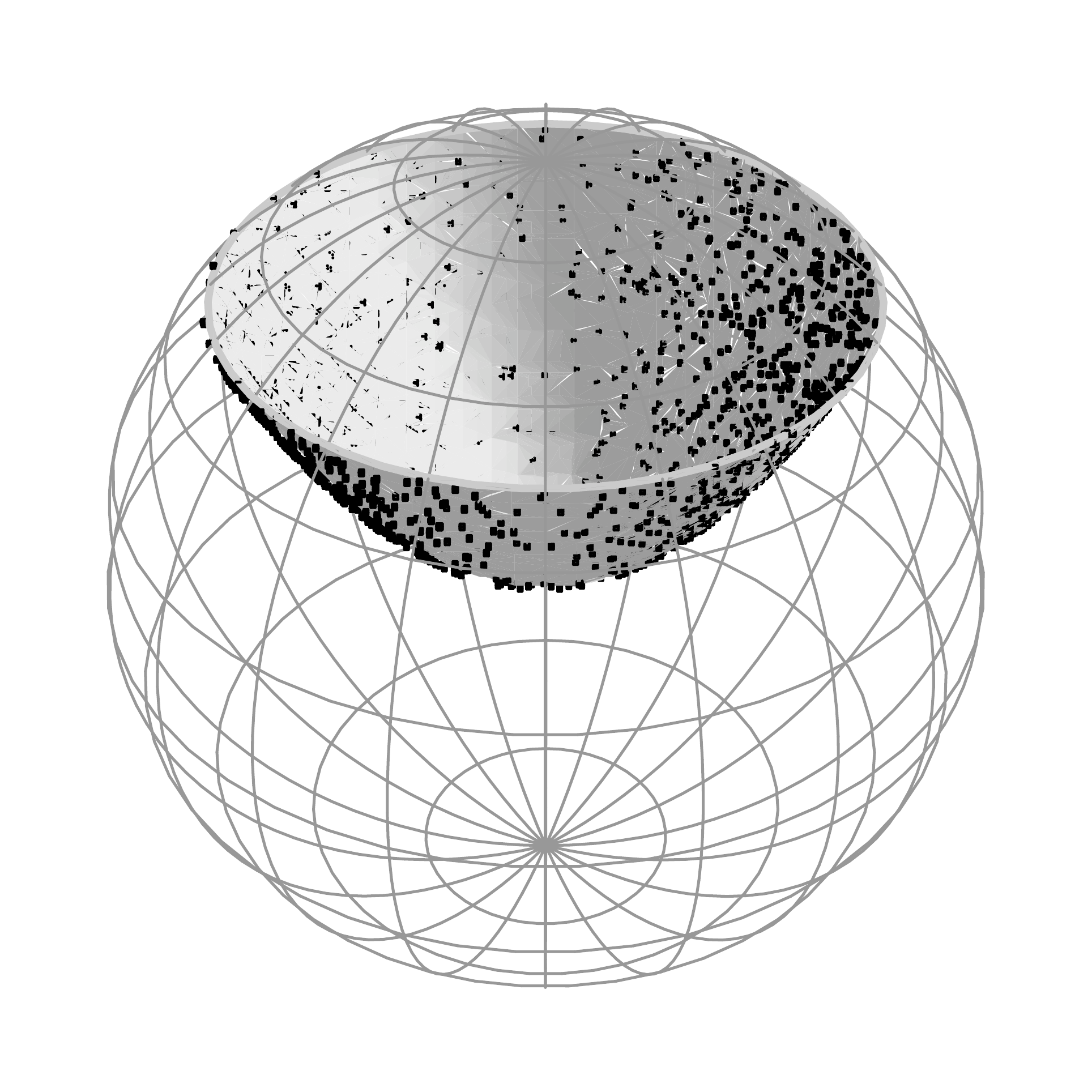} \quad
\includegraphics[width=40mm]{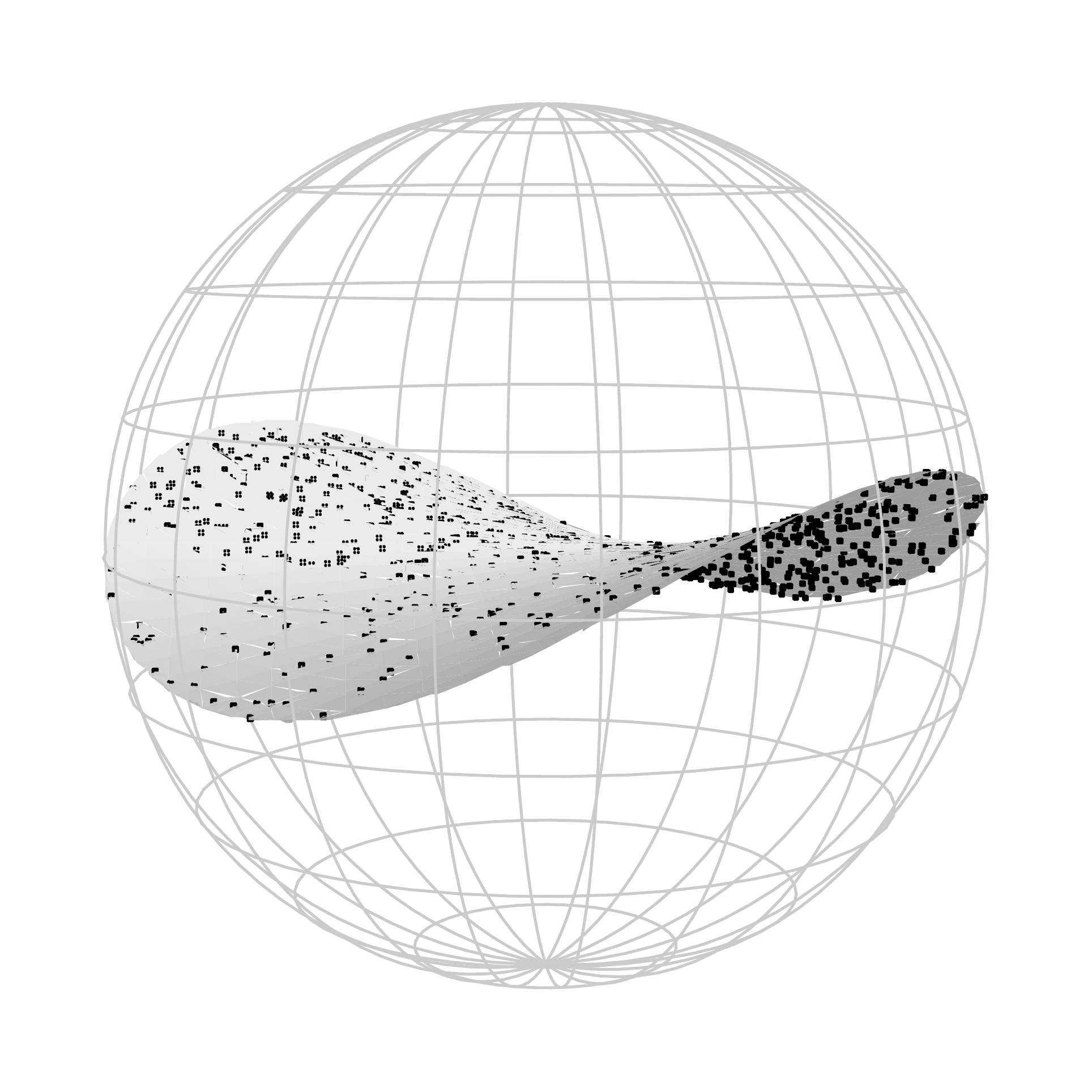}
\begin{center} \caption{Examples of extrinsic discs cut by a sphere in $\mathbb{R}^{3}$
from a paraboloid and from a minimal surface, respectively.
Both extrinsic discs are radially mean $0-$convex from the center point of the cutting sphere.
Theorem \ref{thmAisop} provides an upper bound on the isoperimetric quotients for
the respective extrinsic discs, cf. remark \ref{remSurfIsop}.}
\end{center} \label{FigChipConstruct}
\end{figure}

At each point $x$ on its boundary $\partial D_{R}$, the extrinsic ball $D_{R}$ has a unique
outward pointing unit vector $\nu_{x}\,$, which has the following inner product with the unit tangent vector of
$\,\gamma_{p \to x}\,$:
\begin{equation} \label{eqTangency}
\mathcal{T}(x)\, = \, \langle\, \gamma'_{p \to x\,} \, , \, \nu_{x}\, \rangle  \quad.
\end{equation}
We let $g(R)$ denote the minimum tangency value $\mathcal{T}$
along the boundary of the extrinsic ball
$D_{R}\,$ - in general:
\begin{equation} \label{eqTangencyBound}
g(r) \, = \, \min_{x \in \partial D_{r}} \mathcal{T}(x) \quad .
\end{equation}
An instrumental assumption to be satisfied throughout this paper, is that $g(r) \, > \, 0$ for all $r \,\leq R$
for each $D_{R}$ under consideration. In particular this then implies that the extrinsic balls always  have only one connected
compact component because any other compact component than the ones containing $p$ would automatically
have a point $q$ at minimum distance to $p$,
where then $\mathcal{T}(q) \, = \, 0$, which contradicts $g(r(q)) \, > \, 0\,$.  \\

With these initial ingredients and concepts we can now state the following
first instances of our main theorems:

\begin{theoremA}[The full version is stated in Theorem \ref{thmIsopGeneral}] \label{thmAisop}
If we as\-su\-me the strong conditions stated above in section \ref{subsecStrong}, then the following upper bound on
the isoperimetric quotients for extrinsic balls holds true for $\, C \, \leq \sqrt{-b}\,$:
\begin{equation} \label{eqIsopGeneralFirst}
\frac{\Vol(\partial D_{R})}{\Vol(D_{R})}\, \leq \,
\frac{m}{g(R)}\left(\eta_{Q_{b}}(R) - C\right) \quad ,
\end{equation}
where $\eta_{Q_{b}}(R)$ is the constant mean curvature of the metric hyper-sphere of radius $R$ in the space form of
constant curvature $b$ (see (\ref{eqHbversusWarp})); this function has $\, \lim_{r \to \infty}\,\eta_{Q_{b}}(r)\, = \, \sqrt{-b}\,$.
\end{theoremA}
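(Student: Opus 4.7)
My plan is to combine the exact Hessian formula for the extrinsic distance in the constant-curvature ambient with the divergence theorem applied to a cleverly chosen radial test function on $P$, and then reduce the resulting integral inequality to a 1-dimensional ODE comparison. First I would record two preparatory observations. Along $\partial D_R$ the outward unit normal in $P$ is $\nu=\nabla^P r/|\nabla^P r|$, so decomposing $\nabla^N r=\nabla^P r+(\nabla^N r)^{\perp}$ gives the pleasant identification
\[\mathcal{T}(x)=\langle \gamma'_{p\to x},\nu\rangle=|\nabla^P r|(x),\qquad g(R)=\min_{\partial D_R}|\nabla^P r|.\]
Furthermore, in $\mathbb{K}^n_b$ the Hessian of the extrinsic distance satisfies $\Hess^N r=\eta_{Q_b}(r)(g^N-dr\otimes dr)$, and the Gauss formula $\Delta^P f=\operatorname{tr}_P(\Hess^N f)+m\langle\nabla^N f,H_P\rangle$ produces the pointwise identity $\Delta^P r=(m-|\nabla^P r|^2)\,\eta_{Q_b}(r)-m\,\mathcal{C}(x)$.

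The crucial step is to pair this identity with the radial test function $G(r)=\int_0^r q_b(s)\,ds$, where $q_b$ is the warping function of the ambient (so $q_b''=-bq_b$ and $\eta_{Q_b}=q_b'/q_b$). The choice $G'=q_b$ is precisely what makes the $|\nabla^P r|^2$ terms cancel, yielding the \emph{clean} identity $\Delta^P G(r)=m\bigl(q_b'(r)-q_b(r)\,\mathcal{C}(x)\bigr)$. Since $\nabla^P G=q_b(r)\nabla^P r$ and $G$ is constant on $\partial D_R$, the divergence theorem together with $\mathcal{C}\geq C$ and $|\nabla^P r|\ge g(R)$ on $\partial D_R$ yields
\[q_b(R)\,g(R)\,\Vol(\partial D_R)\le q_b(R)\int_{\partial D_R}|\nabla^P r|\,dA=\int_{D_R}\Delta^P G\,dV\le m\int_{D_R}h_C(r)\,dV,\]
where $h_C(r):=q_b'(r)-Cq_b(r)=q_b(r)(\eta_{Q_b}(r)-C)$.

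It remains to compare $\int_{D_R}h_C(r)\,dV$ with $h_C(R)\,\Vol(D_R)$, so as to extract the target ratio $m(\eta_{Q_b}(R)-C)/g(R)$. Writing $J(R)=\int_{\partial D_R}|\nabla^P r|\,dA$ and $V(R)=\Vol(D_R)$, the coarea formula turns the previous step into the first-order linear differential inequality $(q_b\,J)'(R)\le m\,h_C(R)\,V'(R)$, which is the exact infinitesimal form of the target relation. I would then integrate this against the 1-dimensional ``Isoperimetric Constellation'' model (to be set up in Section~\ref{secIsopConstellation}), comparing $V$ with the volume function of a suitable model ball whose metric sphere realizes the boundary value $h_C(R)$. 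The hypothesis $C\le\sqrt{-b}$ enters decisively here: since $\eta_{Q_b}(r)\ge\sqrt{-b}\ge C$ for all $r>0$, the function $h_C$ is everywhere positive, and this is precisely the condition that keeps the comparison ODE non-degenerate.

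The main obstacle is this final comparison. Although $h_C$ is positive whenever $C\le\sqrt{-b}$, it need not be monotone on $[0,R]$ for $0<C\le\sqrt{-b}$---indeed $h_C'(r)=q_b(r)\bigl(-b-C\eta_{Q_b}(r)\bigr)$ changes sign near $r=0$ once $C>0$---so a naive estimate of the form $h_C(r)\le h_C(R)$ is unavailable, and the straightforward Gronwall-type argument applied to $q_b J-m h_C V$ also fails to close. One is therefore forced into the constellation-style comparison, in which the condition $C\le\sqrt{-b}$ does the double duty of guaranteeing positivity of $h_C$ and well-posedness of the comparison model. The earlier steps amount to essentially bookkeeping; the real content of the theorem is concentrated in this final 1-dimensional comparison.
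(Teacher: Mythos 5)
Your preparatory identities are correct (indeed $\mathcal{T}=\Vert\nabla^Pr\Vert$, the space-form Hessian is exact, and $\Delta^P\!\left(\int_0^{r}Q_b\right)=m\left(Q_b'(r)-Q_b(r)\,\mathcal{C}(x)\right)$, so the divergence theorem gives $Q_b(R)\,g(R)\Vol(\partial D_R)\le m\int_{D_R}\left(Q_b'(r)-C\,Q_b(r)\right)d\mu$), but the proof stops exactly where the theorem lives, and the bridge you sketch cannot be built as described. The reduction you aim for, $\int_{D_R}h_C(r)\,d\mu\le h_C(R)\Vol(D_R)$ with $h_C=Q_b'-CQ_b$, is not merely ``unavailable by a naive estimate'': it is false in an admissible case, since for $C=\sqrt{-b}$ one has $h_C(r)=e^{-\sqrt{-b}\,r}$, strictly decreasing, so the averaged inequality goes the wrong way on all of $[0,R]$ while the theorem is still asserted. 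Likewise the differential form $(Q_bJ)'\le m\,h_CV'$ cannot close by itself: your test function was chosen precisely to cancel the $\Vert\nabla^Pr\Vert^2$ term, so all interior tangency information is discarded, and with only $g(R)$ at your disposal there is no Gronwall-type mechanism left. Deferring to ``the constellation-style comparison'' is not a proof plan; it is the missing proof.

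For contrast, the paper's route (Theorem \ref{thmIsopGeneral}, of which Theorem \ref{thmAisop} is the specialization $w=Q_b$, $h\equiv C$) keeps the interior tangency bound in play from the start: one solves the one-variable Dirichlet--Poisson problem $\LL\psi=-1$, $\psi(R)=0$, for the $g$-weighted operator $\LL f=f''g^2+f'\left((m-g^2)\eta_w-mh\right)$, whose solution is expressed through $\Lambda$ of (\ref{eqLambdaNew}) with the crucial factor $m/g^2(t)$ inside the exponential. The $w$-balance condition then yields $\psi''-\psi'\eta_w\ge0$ (Lemma \ref{lemYellow}), which is what allows the transplanted $\psi(r(x))$ to satisfy $\Delta^P\psi\ge-1$ via the lower-curvature Laplacian comparison; the maximum principle and the divergence theorem then give $\Vol(D_R)\ge-\psi'(R)\,g(R)\Vol(\partial D_R)$, and the factor $\frac{m}{g(R)}\left(\eta_{Q_b}(R)-C\right)$ comes from the balance inequality evaluated at $R$, namely $q_W(s(R))\ge g(R)/\left[m\left(\eta_{Q_b}(R)-C\right)\right]$ --- not from any pointwise monotonicity of $h_C$. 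So the decisive ingredients are the $m/g^2$-weighted comparison model $C^m_{w,g,h}$ and the balance condition, neither of which appears in your argument; supplying them (or an equivalent substitute) is what your proposal is missing.
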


\begin{remark} \label{remSurfIsop}
For the surfaces displayed in Figure 1 the
isoperimetric inequality reads as follows (since $C\,=\,0$ and $b\,=\,0\,$):
\begin{equation} \label{eqIsopGeneralFirst2D}
\frac{\Len(\partial D_{R})}{\A(D_{R})}\, \leq \,
\frac{2}{Rg(R)}\quad,
\end{equation}
where $g(R)$ is the lower bound of the tangency function $\mathcal{T}$ along the boundary curve $\partial D_{R}$ of the
extrinsic disc, see (\ref{eqTangency}) and (\ref{eqTangencyBound}).
\end{remark}

\begin{theoremA}[The full version is stated in Theorem \ref{thmExitTime}] \label{thmAexit}
We assume the strong general conditions stated above in section \ref{subsecStrong}.
Then the mean exit time $E_{R}(x)$, i.e. the time that it takes (on average)
for a Brownian particle starting from a point $x \in D_{R}$ to reach the boundary
$\partial D_{R}$ enjoys a lower bound of the following radial type:
\begin{equation}
E_{R}(x) \, \geq \, \mathcal{E}_{R}(r(x)) \quad ,
\end{equation}
where $\mathcal{E}_{R}(r)$ is a well defined radial function, which only depends on $R$, $b$, $C$, and $g(r)$.
The precise form of this dependence will, of course, be substantially explicated below,
see section \ref{secExitTime} and
Theorem \ref{thmExitTime} therein.
\end{theoremA}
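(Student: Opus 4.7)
The mean exit time function $E_R$ on $D_R$ is characterized by $\Delta^P E_R = -1$ and $E_R|_{\partial D_R} = 0$. The strategy is to construct a continuous, non-negative radial transplant $\mathcal{E}_R:[0,R]\to[0,\infty)$, smooth on $(0,R)$, with $\mathcal{E}_R(R)=0$ and the pointwise differential inequality
\begin{equation*}
\Delta^P(\mathcal{E}_R\circ r)(x) \,\geq\, -1 \quad \text{on } D_R\setminus\{p\}.
\end{equation*}
Granted such a sub-solution, the difference $\mathcal{E}_R\circ r - E_R$ is a subharmonic function on $D_R$ with vanishing boundary trace, and the weak maximum principle immediately delivers $\mathcal{E}_R(r(x)) \leq E_R(x)$ throughout $D_R$, which is the asserted lower bound.

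The first step is a Laplacian comparison for the extrinsic distance. Using the ambient Hessian formula $\Hess^N r = \eta_{Q_b}(r)\bigl(g^N - dr\otimes dr\bigr)$ in $\mathbb{K}_b^n$, tracing over an orthonormal frame of $T_xP$ and appending the standard mean-curvature correction for restricted functions yields
\begin{equation*}
\Delta^P r(x) \, = \, \eta_{Q_b}(r)\bigl(m - |\nabla^P r|^2\bigr) - m\,\mathcal{C}(x) \,\leq\, \eta_{Q_b}(r)\bigl(m - |\nabla^P r|^2\bigr) - mC,
\end{equation*}
the last step being the radial convexity hypothesis $\mathcal{C}(x)\geq C$. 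Moreover, since $\nabla^P r/|\nabla^P r|$ coincides with the outward unit normal $\nu_x$ to $\partial D_{r(x)}$ in $P$, the identity $|\nabla^P r|(x) = \mathcal{T}(x) \geq g(r(x))>0$ holds at every regular level-set point of $r$.

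Next, for a radial function $F$ with $F'\leq 0$, the chain rule combined with the Laplacian estimate and a regrouping of the terms linear in $t:=|\nabla^P r|^2\in[g(r)^2,1]$ produces
\begin{equation*}
\Delta^P(F\circ r) \,\geq\, \bigl(F'' - F'\eta_{Q_b}\bigr)\, t \,+\, m F'\bigl(\eta_{Q_b} - C\bigr).
\end{equation*}
The plan is then to define $\mathcal{E}_R$ as the unique solution of the transplanted second-order ODE obtained by replacing $t$ with the worst-case value $g(r)^2$ and equating the right-hand side with $-1$:
\begin{equation*}
g(r)^2\,\mathcal{E}_R''(r) + \bigl[(m-g(r)^2)\eta_{Q_b}(r) - mC\bigr]\,\mathcal{E}_R'(r) \, = \, -1, \quad \mathcal{E}_R(R) = 0, \quad \mathcal{E}_R'(0^+) = 0.
\end{equation*}
This is a first-order linear ODE in $\mathcal{E}_R'$, integrable by an explicit integrating factor depending only on $m$, $b$, $C$, and $g$; it produces the radial function promised in the theorem.

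The main obstacle is to verify the two structural sign properties of the resulting $\mathcal{E}_R$ that bridge the ODE-equality at $t=g(r)^2$ to the desired differential inequality on $D_R$. Property (i) is $\mathcal{E}_R'(r)\leq 0$ on $(0,R)$, which underlies the substitution direction above and follows from the integrating-factor formula for $\mathcal{E}_R'$ via $\mathcal{E}_R'(0^+)=0$ together with the strictly negative source $-1/g(r)^2$. Property (ii) is the non-negativity of the coefficient $\mathcal{E}_R''-\mathcal{E}_R'\eta_{Q_b}$, ensuring that enlarging $t$ beyond $g(r)^2$ only strengthens the estimate; this is the delicate point, and it is exactly where the standing assumption $C \leq \sqrt{-b}\leq\eta_{Q_b}(r)$ enters, since it keeps the drift $\eta_{Q_b}-C\geq 0$ and feeds, through the explicit quadrature, a lower bound on $|\mathcal{E}_R'|$ strong enough to overpower the negative driving term in $\mathcal{E}_R''-\mathcal{E}_R'\eta_{Q_b}$. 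Once (i) and (ii) are secured on $(0,R)$, the inequality $\Delta^P(\mathcal{E}_R\circ r)\geq -1$ holds on $D_R\setminus\{p\}$, and the maximum principle closes the argument.
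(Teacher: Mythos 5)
Your strategy is the paper's own: the ODE you pose for $\mathcal{E}_R$ is exactly the Dirichlet--Poisson problem (\ref{eqPoisson1}) for the operator $\LL$ with $w=Q_b$ and $h=C$, the transplantation-plus-maximum-principle step is (\ref{eqMeanExit})--(\ref{eqMeanExitComp}), and your property (i) is the paper's formula (\ref{eqWellDef}), which exhibits $\mathcal{E}_R'(r)=-q_W(s(r))/g(r)\le 0$. Working with the exact space-form Hessian instead of the Greene--Wu comparison is harmless here, since Theorem B is stated in $\mathbb{K}^n_b$.

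The genuine gap is your property (ii). From your ODE one gets $\mathcal{E}_R''-\mathcal{E}_R'\eta_{Q_b}=-g^{-2}\bigl(1+m\,\mathcal{E}_R'\,(\eta_{Q_b}-C)\bigr)$, so (ii) is equivalent to $m\,\vert\mathcal{E}_R'(r)\vert\,(\eta_{Q_b}(r)-C)\ge 1$, i.e.\ (via (\ref{eqWellDef}) and the chain (\ref{eqBal1})--(\ref{eqBal3})) to the $w$-balance condition (\ref{eqDefBalCond}) of the comparison space. That condition is an explicit hypothesis of the full Theorem \ref{thmExitTime}, and the paper proves (ii) as Lemma \ref{lemYellow} from it, using Lemma \ref{lemBalExpand}. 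You instead assert that $C\le\sqrt{-b}$ already forces (ii) ``through the explicit quadrature,'' but you never carry out that estimate, and it is false at this level of generality: take $m=2$, $b=-1$, $C=1=\sqrt{-b}$, $g\equiv 1$. Then (\ref{eqLambdaNew}) gives $\Lambda(r)=T'\sinh(r)e^{-2r}$, hence
\begin{equation*}
m\,\vert\mathcal{E}_R'(r)\vert\,\bigl(\eta_{Q_b}(r)-C\bigr)\;=\;\frac{2e^{r}}{\sinh^2(r)}\left(\frac13-\frac{e^{-r}}{2}+\frac{e^{-3r}}{6}\right),
\end{equation*}
which at $r=2$ is $\approx 0.30<1$; there $\mathcal{E}_R''-\mathcal{E}_R'\eta_{Q_b}<0$, and replacing $t=\Vert\nabla^P r\Vert^2$ by its lower bound $g^2$ goes the wrong way (one would need an upper bound on $t$). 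Likewise, with $g\equiv 1$ and any constant $C>0$ the quantity equals $1-Cr+O(r^2)$ near $r=0$, so (ii) already fails for small $r$. The argument closes only if you impose and use the balance hypothesis as in Theorem \ref{thmExitTime} --- for instance it does close for $C\le 0$, where $Q_b''-C\,Q_b'\ge 0$ gives (\ref{eqBalB}) and hence (ii) by Lemma \ref{lemBalExpand} --- but the heuristic that the nonnegative drift ``overpowers the negative driving term'' is not a proof and is not true for all $C\le\sqrt{-b}$.
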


\begin{theoremA}[The full version is stated in Theorem \ref{thmSphereCondGen}] \label{thmAparab}
We assume the strong general conditions stated above in section \ref{subsecStrong} for all radii $R$ of the extrinsic disc $D_{R}\,$.
Suppose further that
\begin{equation}\label{eqAparab}
\int^{\infty} Q_{b}(r)\exp\left(-\,\int_{1}^{r}\frac{m}{g^{2}(t)}\left(\eta_{Q_{b}}(t) - C \right)\, dt\right)\,dr\, = \, \infty \quad.
\end{equation}
Then $P^{m}$ is parabolic.
\end{theoremA}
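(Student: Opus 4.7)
The plan is to invoke the Kelvin--Nevanlinna--Royden condition (b) recalled in Section \ref{secIntro}: a Riemannian manifold is parabolic if and only if its capacity vanishes. Accordingly, fix any $\rho > 0$ such that $D_\rho$ is a well-defined extrinsic ball of $P$; it suffices to prove that the relative capacity $\operatorname{Cap}(D_\rho, D_R)$ of the extrinsic annulus $A_{\rho,R} = D_R \setminus D_\rho$ tends to zero as $R \to \infty$.

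To estimate this capacity from above, I would test the Dirichlet energy against radial functions $u(x) = \psi(r(x))$ satisfying $\psi(\rho) = 1$ and $\psi(R) = 0$. By the co-area formula the energy integral reduces to one dimension,
$$\int_{A_{\rho,R}} |\nabla u|^2\, dV \, = \, \int_\rho^R \psi'(r)^2\, F(r)\, dr, \qquad F(r) \, := \, \int_{\partial D_r} |\nabla r|\, dA,$$
and the standard Cauchy--Schwarz optimization in $\psi$ yields $\operatorname{Cap}(D_\rho, D_R) \, \le \, \bigl(\int_\rho^R dr/F(r)\bigr)^{-1}$. Writing $V(r) := \Vol(D_r)$, the bounds $g(r) \le |\nabla r| \le 1$ on $\partial D_r$ give simultaneously $g(r)\,V'(r) \le \Vol(\partial D_r)$ and $F(r) \le \Vol(\partial D_r)$, so an upper bound on $\Vol(\partial D_r)$ will drive the whole argument.

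That upper bound is precisely what Theorem \ref{thmIsopGeneral} furnishes. Its isoperimetric inequality, combined with $g(r)\,V'(r) \le \Vol(\partial D_r)$, produces the scalar ODI
$$\frac{V'(r)}{V(r)} \, \le \, \frac{m\bigl(\eta_{Q_b}(r) - C\bigr)}{g^2(r)},$$
which integrates to the volume growth bound
$$V(r) \, \le \, V(1)\, \exp\!\left(\int_1^r \frac{m}{g^2(t)}\bigl(\eta_{Q_b}(t) - C\bigr)\, dt\right).$$
Substituting this back into the isoperimetric inequality yields an explicit pointwise upper bound on $\Vol(\partial D_r)$, hence on $F(r)$. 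The capacity comparison to be developed in Section \ref{secCapAnalysis} then packages this information into an estimate of $\operatorname{Cap}(D_\rho, D_R)$ whose right-hand side is the reciprocal of (a positive constant multiple of) the integral appearing in (\ref{eqAparab}). Divergence of that integral, which is exactly the hypothesis, forces $\operatorname{Cap}(D_\rho, D_R) \to 0$ as $R \to \infty$; by Kelvin--Nevanlinna--Royden, $P^m$ is parabolic.

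The main technical subtlety, and the reason a separate capacity inequality is needed, lies in matching the precise integrand in (\ref{eqAparab}). A naive substitution of $F(r) \le \Vol(\partial D_r)$ into $\int dr/F(r)$ produces a weight proportional to $g(r)/(\eta_{Q_b}(r) - C)$ against the exponential factor, rather than the ambient warping weight $Q_b(r)$ written in the hypothesis. Closing this gap requires comparing $P$'s capacity with that of a radially warped model whose warping function is shaped by $Q_b$; this is where the radial sectional curvature lower bound on $N$ enters decisively, and it is the heart of the capacity inequality to be established in Section \ref{secCapAnalysis}. Once that comparison is in place, the parabolicity conclusion follows exactly as sketched above.
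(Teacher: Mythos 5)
Your overall strategy -- vanishing capacity via the Kelvin--Nevanlinna--Royden criterion, a capacity upper bound from radial test functions and the co-area formula, and then matching the resulting one-dimensional integral with the hypothesis (\ref{eqAparab}) -- is indeed the route the paper takes (Theorem \ref{thmCapComp} plus Theorem \ref{thmSphereCondGen}). But your proposal stops exactly at the step that carries all the content. The quantity you must control is $F(r)=\int_{\partial D_r}\Vert\nabla^P r\Vert\,d\nu$, and what the paper proves (Corollary \ref{corGradientIntBound}, via Proposition \ref{propMonster}) is the sharp bound $F(r)\le g(r)\Lambda(r)$, where $\Lambda$ is defined by $\Lambda(r)w(r)g(r)=T\exp\bigl(\int_1^r\frac{m}{g^2(t)}(\eta_w(t)-h(t))\,dt\bigr)$ with $w=Q_b$, $h=C$. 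It is precisely this bound that turns $\bigl(\int_\rho^R dr/F(r)\bigr)^{-1}$ into the reciprocal of (a constant times) the integral in (\ref{eqAparab}), because $1/(g\Lambda)=T^{-1}Q_b(r)\exp\bigl(-\int_1^r\frac{m}{g^2}(\eta_{Q_b}-C)\bigr)$. Your substitute chain $F(r)\le\Vol(\partial D_r)\le\frac{m(\eta_{Q_b}-C)}{g}\Vol(D_r)$ plus the exponential volume bound gives, as you note, the weight $g(r)/(\eta_{Q_b}(r)-C)$ instead of $Q_b(r)$; and this is not a removable cosmetic discrepancy: for example with $C=0$ and $b<0$ that weight is bounded while $Q_b$ grows exponentially, so divergence of (\ref{eqAparab}) simply does not imply divergence of your integral. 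So the naive route fails, and the proof is not closed.

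The missing idea is the construction of the comparison model $C^m_{w,g,h}$ (Definition \ref{defCspace}) with warping function $W(s(r))^{m-1}=\Lambda(r)$ and the argument that delivers $F(r)\le g(r)\Lambda(r)$: one transplants the radial solution $\psi$ of the Poisson problem for the operator $\LL$ into $P$, uses the Hessian/Laplacian comparison coming from the radial curvature lower bound together with the convexity and tangency bounds, invokes the $w$-balance condition (\ref{eqDefBalCond})/(\ref{eqBalB}) to get $\psi''-\psi'\eta_w\ge0$, applies the divergence theorem to obtain $\int_{\partial D_r}\Vert\nabla^P r\Vert\,d\nu\le\Vol(D_r)/(-\Gamma(r))$, and then needs the volume comparison $\Vol(D_r)\le\Vol(B^W_{s(r)})$ to replace $\Vol(D_r)$ by the model volume. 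Your closing appeal to ``comparing $P$'s capacity with a radially warped model shaped by $Q_b$'' restates the goal rather than supplying this mechanism; in particular the weight $Q_b$ is not produced by the ambient curvature bound alone but by the interplay of $Q_b$, $g$, $C$ and the balance condition encoded in $\Lambda$, and the keeping of the factor $\Vert\nabla^P r\Vert$ inside the boundary integral (rather than bounding it by $1$) is essential. Without Corollary \ref{corGradientIntBound} (or an equivalent), the capacity estimate you need, and hence Theorem \ref{thmAparab}, is not established.
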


In particular, we may, and do, extract the following first corollaries directly from the above
theorem.

\begin{corollary} \label{corAparab}
Let $P^{2}$ denote a two-dimensional radially mean $0-$ convex surface in Euclidean space $\mathbb{R}^{n}$ with
a radial tangency bounding function $g(r)$ which satisfies the following inequality for all sufficiently large $r$:
\begin{equation} \label{eqCorA}
g(r) \, \geq \, \tilde{g}(r) \, = \,  \sqrt{\frac{2\log(r)}{1 + 2\log(r)}} \quad .
\end{equation}
Then $P^{2}$ is parabolic.
\end{corollary}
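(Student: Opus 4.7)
The plan is to apply Theorem \ref{thmAparab} with the parameters dictated by the hypotheses and verify the divergence of the resulting parabolicity integral (\ref{eqAparab}) by an elementary computation. Setting $m=2$, $b=0$, and $C=0$, the warping function of the flat model space is $Q_{0}(r)=r$ and the mean curvature of the Euclidean metric sphere of radius $r$ is $\eta_{Q_{0}}(r)=1/r$. Condition (\ref{eqAparab}) then reduces to
\begin{equation*}
\int^{\infty} r \cdot \exp\!\left(-\int_{1}^{r}\frac{2}{g^{2}(t)\,t}\,dt\right)dr \, = \, \infty ,
\end{equation*}
so it suffices to verify this divergence.

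I would then exploit the assumed bound $g(r)\geq \tilde{g}(r)$, valid for all $r\geq r_{0}$ with some large threshold $r_{0}$, to estimate
\begin{equation*}
\frac{2}{g^{2}(t)\,t} \, \leq \, \frac{2}{\tilde{g}^{2}(t)\,t} \, = \, \frac{1}{t\,\log t} + \frac{2}{t} \qquad (t\geq r_{0}).
\end{equation*}
Integrating from $r_{0}$ to $r$ gives the explicit primitive $\log(\log r)-\log(\log r_{0})+2\log(r/r_{0})$, while $\int_{1}^{r_{0}}2/(g^{2}(t)\,t)\,dt$ contributes only a finite constant, since $g$ is positive on the compact interval $[1,r_{0}]$. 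Assembling these pieces yields
\begin{equation*}
r \cdot \exp\!\left(-\int_{1}^{r}\frac{2}{g^{2}(t)\,t}\,dt\right) \, \geq \, \frac{K}{r\,\log r} \qquad (r \geq r_{0}),
\end{equation*}
for some constant $K>0$.

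Since the improper integral $\int^{\infty} dr/(r\log r)$ is divergent, the parabolicity condition (\ref{eqAparab}) is satisfied, and Theorem \ref{thmAparab} delivers the conclusion that $P^{2}$ is parabolic. No serious obstacle arises beyond this elementary estimate; rather, the role of the computation is to show that the specific form $\tilde{g}(r)=\sqrt{2\log r/(1+2\log r)}$ is calibrated so that the exponent grows like $2\log r+\log\log r$, placing the resulting integrand precisely at the borderline logarithmic rate at which the parabolicity integral still just barely diverges.
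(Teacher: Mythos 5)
Your proposal is correct and follows essentially the same route as the paper: apply Theorem \ref{thmAparab} with $m=2$, $b=0$, $C=0$, note that $\tilde{g}$ is calibrated so that $2/(t\,\tilde{g}^{2}(t)t^{0})$ integrates to $\log(t^{2}\log t)$ up to a constant (the paper writes it as the derivative of $\log(r^{2}\log r)$, you split it as $1/(t\log t)+2/t$ — the same computation), and conclude from the divergence of $\int dr/(r\log r)$. The only difference is bookkeeping of the finite contribution on the initial interval, which the paper absorbs into its constants $c_{1},c_{2}$ exactly as you do with $K$.
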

\begin{remark} \label{remZero}
This particular result should be viewed in the light of the fact that there are well known
minimal (hence $0-$convex) surfaces in $\mathbb{R}^{3}$ - like Scherk's doubly periodic minimal surface -
which are non-parabolic (i.e. hyperbolic), but which nevertheless also - in partial contrast to what could be expected from the
above Corollary -
support radial tangency functions $\mathcal{T}(x)$ which are 'mostly' close to $1$ at infinity.
The Scherk surface alluded to is the graph surface of
the function $f(x,y)\, = \, \log\left(\cos(y)/\cos(x)\right)$, which is smooth and well-defined on a
checkerboard pattern in the $(x, y)-$plane.
It was proved in \cite{MMT} - via methods quite different from those considered in the
present paper - that Scherk's surface is hyperbolic. Roughly speaking,
the Scherk's surface may be considered
as the most 'slim' known hyperbolic surface in $\mathbb{R}^{3}$. If that surface is
viewed from a point far away from the $(x,y)-$plane,
the surface looks like two sets of parallel half-planes, both orthogonal to the $(x,y)-$plane,
one set below and the other above,
and the two sets being rotated $\pi/2$
with respect to each other, see e.g. \cite[pp. 46--49\,]{Mcrm}.

The radial tangency (from any fixed point $p$ in the $(x,y)-$plane)
is 'mostly' close to $1$ at infinity except
for the points in the $(x,y)-$plane itself, where the tangency function is 'wiggling'
sufficiently close to $0$, so that the condition \ref{eqCorA} cannot be satisfied.
In fact, if we accept for a moment the rough
description of the surface as two sets of parallel half planes,
then the integral over an extrinsic
disc $D_{R}$ of the tangency function $\mathcal{T}(x)$ from $p\, = \, (0,0,0)\,$ is roughly $\,0.8\A(D_{R})\,$ for large values of $R$. Note that for this simplified calculation the extrinsic disc consists of a finite number of flat half-discs, each one of which has radius $\,\sqrt{R^{2}- \rho_{0}^{2}\,}\,$, where $\rho_{0} \, \leq \, R\,$ denotes the orthogonal distance to $p$ from the plane containing the half-disc.

We have discussed this particular example at some length, because it seems to be a good example for displaying
in purely geometric terms what goes on at or close to
the otherwise still quite unknown borderline between hyperbolic and parabolic surfaces in $\mathbb{R}^{3}$.
In other words, the tangency function $\mathcal{T}$ introduced here seems to have an interesting
and instrumental r\^{o}le to play
concerning the quest of finding a
necessary and sufficient condition for a surface to be hyperbolic, resp. parabolic.
\end{remark}

\begin{corollary} \label{corBparab}
Let $P^{m}$ denote an $m-$dimensional radially mean $C-$ convex submanifold in the space form $\mathbb{K}^{n}_{b}$ of constant curvature $b \, <\, 0$. Suppose
\begin{equation} \label{eqBCcondition}
0 \, \leq \, m\left(\sqrt{-b} \, - \,  C\right) \, \leq \, \sqrt{-b} \quad ,
\end{equation}
and suppose that $P^{m}$ admits a
radial tangency bounding function $g(r)$ which satisfies $g(r) \, \geq \, \tilde{g}(r)$  for all sufficiently large $r$, where now - using shorthand notation $\tau \, = \, r\sqrt{-b}\,\,$:
\begin{equation} \label{eqGtildeB}
\tilde{g}(r) \, = \,  \sqrt{\frac{m\,r\log(r)\left(\sqrt{-b}\cosh(\tau)-C\sinh(\tau)\right)}
{\sinh(\tau)\log(r)+\tau\cosh(\tau)\log(r)+\sinh(\tau)}} \quad .
\end{equation}
Then $P^{m}$ is parabolic.
\end{corollary}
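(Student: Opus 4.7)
The plan is to apply Theorem A (Theorem~\ref{thmAparab}) and verify its divergence condition \eqref{eqAparab} by choosing the worst-case tangency profile $g(r)=\tilde{g}(r)$. The function $\tilde{g}$ has been designed so that the resulting exponent produces exactly the borderline logarithmically divergent integrand $\sim 1/(r\log r)$.

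First I would note that the hypothesis \eqref{eqBCcondition} ensures both that $C\leq\sqrt{-b}$, so the factor $\eta_{Q_{b}}(t)-C=\sqrt{-b}\coth(t\sqrt{-b})-C$ is non-negative (and hence the integrand in the exponent is monotone in $1/g^{2}$), and also that $\tilde{g}$ remains an admissible tangency bound asymptotically (one checks that $\tilde{g}^{2}(r)\to m(\sqrt{-b}-C)/\sqrt{-b}\leq 1$). The assumption $g(r)\geq\tilde{g}(r)$ for all sufficiently large $r$ then gives, for some constant $K$ and some large $r_{0}$,
$$
-\int_{1}^{r}\frac{m(\eta_{Q_{b}}(t)-C)}{g^{2}(t)}\,dt \;\geq\; -\int_{r_{0}}^{r}\frac{m(\eta_{Q_{b}}(t)-C)}{\tilde{g}^{2}(t)}\,dt - K,
$$
so it suffices to establish the divergence of the integral in \eqref{eqAparab} with $g$ replaced by $\tilde{g}$.

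The heart of the argument is a straightforward algebraic simplification. Writing $\tau=t\sqrt{-b}$, one has $\eta_{Q_{b}}(t)-C=(\sqrt{-b}\cosh(\tau)-C\sinh(\tau))/\sinh(\tau)$, and the numerator of $\tilde{g}^{2}(t)$ has been engineered to contain the same factor; the ratio therefore collapses, and a direct computation yields
$$
\frac{m(\eta_{Q_{b}}(t)-C)}{\tilde{g}^{2}(t)} \;=\; \frac{1}{t}+\sqrt{-b}\coth(t\sqrt{-b})+\frac{1}{t\log t}.
$$
Integrating from $r_{0}$ to $r$ and exponentiating then produces, up to multiplicative constants, the expression $\left(r\,\sinh(r\sqrt{-b})\,\log r\right)^{-1}$.

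Finally, since $Q_{b}(r)=\sinh(r\sqrt{-b})/\sqrt{-b}$, multiplying against this exponential cancels the $\sinh$ factor and leaves an integrand proportional to $1/(r\log r)$, whose integral to infinity diverges. This verifies \eqref{eqAparab}, so Theorem A gives that $P^{m}$ is parabolic. I expect the main obstacle to be the algebraic simplification step: one must check carefully that the three-term numerator of $\tilde{g}^{2}$ is precisely calibrated so that, after division by $\sinh(\tau)$ and multiplication by $m(\eta_{Q_{b}}(t)-C)$, exactly the three terms $1/t$, $\sqrt{-b}\coth(\tau)$, and $1/(t\log t)$ emerge with no residual contributions. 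This is bookkeeping rather than a conceptual difficulty, but it is the step where the sharpness of the hypothesis \eqref{eqGtildeB} becomes visible.
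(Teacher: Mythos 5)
Your proposal is correct and follows essentially the same route as the paper: apply Theorem \ref{thmAparab}, use $C\leq\sqrt{-b}$ (from (\ref{eqBCcondition})) to reduce to the borderline profile $\tilde{g}$, and observe that $\tilde{g}$ is calibrated so that $m(\eta_{Q_{b}}(t)-C)/\tilde{g}^{2}(t)=\tfrac{d}{dt}\log\bigl(t\,\sinh(t\sqrt{-b})\,\log t\bigr)$, whence the exponential cancels the factor $Q_{b}(r)$ up to constants and leaves the divergent integrand $\sim 1/(r\log r)$ — exactly the paper's argument, which it describes as "verbatim" the proof of Corollary \ref{corAparab} with (\ref{eqBCcondition}) accounting for well-definedness and $\tilde{g}\leq 1$. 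Your algebraic identity checks out as stated, so there is no gap.
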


\begin{remark}\label{remNegative}
This result should likewise be compared with the fact established in \cite{MP3}, that every minimal
 submanifold of any co-dimension in a negatively curved space form is transient.
 Such submanifolds
are {\em{not}} $\,C-$convex, of course, for any positive $C$ (note that (\ref{eqBCcondition}) implies $C\, > \, 0$).
We observe that the bounding function
$\tilde{g}(r) \to 0$ for $r \to 0$ precisely when $\, C \, = \, \sqrt{-b}\,$.
Therefore, in relation to  the discussion in the previous remark \ref{remZero},
what 'induces' parabolicity in negatively curved ambient spaces
is to a large extent the radial mean $C\,-$con\-vexity assumption - not just the tangency condition $\mathcal{T}(x) \, \geq \, \tilde{g}(r(x))\,$.
 \end{remark}

\begin{proof}[Proof of Corollary \ref{corAparab}]
With $b \, = \, 0$ we have $Q_{b}(r) \, = \, r$ and since $\tilde{g}(r)$ is designed to satisfy
\begin{equation}
\frac{2}{r\,\tilde{g}^{2}(r)}\, = \, \frac{2r\log(r)+r}{r^{2}\log(r)} \, = \, \frac{d}{dr}\log(r^{2}\log(r))
\end{equation}
for sufficiently large values of $r$, say $r \, \geq \, A$,  we get
for some positive constant $c_{1}$:
\begin{equation}
\begin{aligned}
- \int_{A}^{r} \frac{2}{t\,\tilde{g}^{2}(t)}\,dt \,
= \, - \log(r^{2}\log(r)) + c_{1} \quad ,
\end{aligned}
\end{equation}
so that, for some other positive constant $c_{2}$:
\begin{equation}
\begin{aligned}
&\int^{\infty} Q_{b}(r)\exp\left(-\,\int_{A}^{r}\frac{m\,\,\eta_{Q_{b}}(t)}{g^{2}(t)}\, dt\right)\,dr\, \\
&\geq \, \int^{\infty} r\,c_{2}\exp\left(-\,\int_{A}^{r}\frac{2}{t\,\tilde{g}^{2}(t)}\, dt\right)\,dr\, \\
&= \, \int^{\infty} \frac{c_{2}}{r\log(r)}\,dr\, \\
&= \, \infty \quad ,
\end{aligned}
\end{equation}
which then implies parabolicity according to theorem \ref{thmAparab}.
\end{proof}

The proof of Corollary \ref{corBparab} follows essentially verbatim, except for handling the
allowed $C-$interval for given $b$ and $m$. The condition (\ref{eqBCcondition})
simply stems from the two obvious conditions, that the square root defining
$\tilde{g}(r)$ in (\ref{eqGtildeB}) must be well-defined and less than $1$. It is of independent
interest to note as well, that when $b$ approaches $0$ then $C$ must go to $0$, i.e. we are then back in the case of
Corollary \ref{corAparab}.\\


\section{Previous Results for Surfaces} \label{secPrevRes}

We show in the next section, that a number of surfaces, including the catenoid
and the hyperboloid of one sheet, are parabolic using the condition established
in Corollary \ref{corAparab}.
Parabolicity of those surfaces is known already from
criteria due to by Milnor and Ichihara, which we
briefly outline here for comparison with our Corollary \ref{corAparab}
- again mainly in order to support
and facilitate intuition before stating and proving the full versions
of our main results.

J. Milnor considered 2D warped products (see Definition \ref{defModel} in section \ref{secGeoBounds}) with general metrics
molded by a warping functions $w(r)\,$ on $r \in [\,0, \infty\,[\,$ as follows:
\begin{equation} \label{eq2Dwarp}
ds^{2}\,=\,dr^{2} + w^{2}(r)\,d\theta \quad .
\end{equation}
\begin{theorem}[Milnor, \cite{MilnorDecide}] \label{thmMilnorCrit}
If the Gaussian curvature function $K(s)$ considered as a function of the distance $s$ from the pole of
a model surface given as in (\ref{eq2Dwarp})
satisfies
\begin{equation} \label{eqMilnorCrit}
K(s) \, \geq \, -\frac{1}{s^{2}\log(s)} \quad ,
\end{equation}
then the surface is parabolic.
\end{theorem}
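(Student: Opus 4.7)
The plan is to reduce parabolicity of the 2D warped product model to a classical one-dimensional integral criterion, and then to extract the required bound on the warping function $w$ from the curvature hypothesis via a Sturm-type comparison with the explicit borderline solution $v(r) = r\log r$ of the associated second-order ODE.

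First I would record two standard facts for the metric (\ref{eq2Dwarp}): the Gauss curvature along a geodesic from the pole is $K(r) = -w''(r)/w(r)$, and the length of the geodesic circle of radius $r$ equals $2\pi\,w(r)$. Applied with $\Vol(\partial B_{r}(q)) = 2\pi w(r)$, the Grigor'yan--Karp--Lyons--Sullivan--Varopoulos criterion recalled in the Introduction then shows that the model surface is parabolic if and only if
\begin{equation*}
\int^{\infty} \frac{dr}{w(r)} \, = \, \infty.
\end{equation*}
In this way the theorem is translated into an upper bound on the warping function: it is enough to prove that $w(r) \leq M\, r\log r$ for some constant $M > 0$ and all sufficiently large $r$, because $\int^{\infty} dr/(r\log r) = \infty$.

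Next I would translate the curvature hypothesis $K(r) \geq -1/(r^{2}\log r)$ into the differential inequality
\begin{equation*}
w''(r) \, \leq \, \frac{w(r)}{r^{2} \log r}, \qquad r \geq r_{0},
\end{equation*}
where $r_{0} > e$ is fixed. The natural comparison function is $v(r) = r\log r$, which satisfies $v''(r) = v(r)/(r^{2}\log r)$ exactly. Setting $u(r) = w(r)/v(r)$ and expanding $w'' = u''v + 2u'v' + u v''$, the inequality above collapses to $u''v + 2u'v' \leq 0$, which after multiplication by $v > 0$ rewrites as the monotonicity statement
\begin{equation*}
\bigl(u'(r)\,v(r)^{2}\bigr)' \, \leq \, 0 \qquad \text{on } [r_{0}, \infty).
\end{equation*}
Hence $u'(r)\,v(r)^{2} \leq A$ for some constant $A$ and all $r \geq r_{0}$. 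If $A \leq 0$ then $u$ is non-increasing and thus bounded above by $u(r_{0})$; if $A > 0$ then $u'(r) \leq A/(r^{2}\log^{2} r)$, which is integrable near infinity, so again $u$ is bounded above by a constant $M$. In either case $w(r) \leq M\, r\log r$, whence $\int^{\infty} dr/w(r) \geq (1/M)\int^{\infty} dr/(r\log r) = \infty$, giving parabolicity.

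The main obstacle is essentially the bookkeeping of the comparison step: one must pick $r_{0}$ large enough so that $\log r > 0$ and that $w$ is bounded away from zero (which is automatic beyond the pole for a smooth non-compact model), and track the sign of the constant $A$ as indicated. None of these points alters the structure of the argument, so the plan above should go through cleanly without further complications.
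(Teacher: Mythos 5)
Your argument is correct. Note, however, that the paper itself offers no proof of this statement: Theorem \ref{thmMilnorCrit} is quoted as a known result from Milnor's paper \cite{MilnorDecide}, so there is no in-paper proof to compare against. What you have written is essentially a faithful reconstruction of Milnor's original argument: reduce parabolicity of the rotationally symmetric model to the divergence of $\int^{\infty} dr/w(r)$ (the sufficiency direction is exactly the boundary-volume criterion recalled in the paper's introduction, and for $w$-models it is also necessary, as the paper remarks at the end of section \ref{secExitTime}), translate $K=-w''/w \geq -1/(r^{2}\log r)$ into $w'' \leq w/(r^{2}\log r)$, and run a Sturm-type comparison against the exact borderline solution $v(r)=r\log r$. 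Your Wronskian-style step is sound: with $u=w/v$ one indeed gets $(u'v^{2})'\leq 0$, hence $u'v^{2}$ is bounded above, and the two sign cases for the bound $A$ both yield $u$ bounded, so $w(r)\leq M\,r\log r$ and $\int^{\infty}dr/w = \infty$. The only points needing care are the ones you already flag: the hypothesis is to be read for large $r$ (so that $\log r>0$), and only positivity of $w$ and $v$ is needed to divide, so the argument goes through as stated.
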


A condition for parabolicity in terms of the total curvature of a given surface
(not necessarily a model surface of revolution) is established by
K. Ichihara:

\begin{theorem}[Ichihara, \cite{Ich1}] \label{thmIchiharaCrit}
If a 2-dimensional manifold $M^{2}$ has finite total absolute Gaussian curvature, i.e.
\begin{equation}
\int_{M^{2}}\,|K|\, d\mu \, < \, \infty \, \quad ,
\end{equation}
then it is parabolic.
\end{theorem}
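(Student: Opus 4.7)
The plan is to deduce parabolicity from the volume--growth criterion $\int^{\infty} r/\Vol(B_{r}(q))\, dr = \infty$ already recalled in the Introduction. The essential step is to show that finite total absolute Gaussian curvature forces at most quadratic volume growth for the intrinsic geodesic balls of $M^{2}$.

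First, I would invoke Huber's classical structure theorem: a complete Riemannian $2$-manifold with $\int_{M^{2}}|K|\, d\mu < \infty$ is homeomorphic, and even conformally equivalent, to a closed Riemann surface $\overline{M}$ with finitely many points $p_{1}, \ldots, p_{k}$ removed. In particular $M^{2}$ has finite Euler characteristic and admits a decomposition into a compact core together with finitely many topological ends $E_{1}, \ldots, E_{k}$, each homeomorphic to $S^{1} \times [0,\infty)$.

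Next I would bound the area growth on each end. The Cohn--Vossen inequality
$$\int_{M^{2}} K \, d\mu \, \leq \, 2\pi\,\chi(M^{2}),$$
together with its localized refinements for individual ends (due to Huber, Shiohama, and Bangert), controls the angular defect at infinity and yields, for each end $E_{j}$, an asymptotic area bound of the form
$$\A\bigl(\{\, x \in E_{j}\, : \, \dist(x, \partial E_{j}) \leq r \,\}\bigr) \, \leq \, c_{j}\, r^{2} \, + \, O(r).$$
Summing over the finitely many ends and adding the bounded area of the compact core then delivers a uniform estimate $\Vol(B_{r}(q)) \leq C\,r^{2}$ for all sufficiently large $r$ and some fixed base point $q \in M^{2}$. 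Consequently
$$\int^{\infty} \frac{r}{\Vol(B_{r}(q))} \, dr \, \geq \, \int^{\infty} \frac{dr}{C\,r} \, = \, \infty,$$
and parabolicity follows from the Grigor'yan--Karp--Lyons--Sullivan--Varopoulos criterion.

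The main obstacle is exactly the quadratic area bound at each end, since it is here that the full force of the finite total absolute curvature hypothesis must be extracted. A more elegant route that sidesteps this technical estimate is to exploit the conformal invariance of parabolicity in dimension two: the identity $\Delta_{e^{2\phi}g} = e^{-2\phi}\Delta_{g}$ shows that harmonic functions, and hence the existence of a Green's function, depend only on the conformal class of the metric. After Huber's uniformization each end $E_{j}$ can be realized conformally as a punctured disc, and the classical fact that a compact Riemann surface with finitely many punctures is parabolic then completes the argument without any direct appeal to volume growth.
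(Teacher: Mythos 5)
The paper does not prove this statement at all: Theorem \ref{thmIchiharaCrit} is quoted purely as background, with the proof delegated to Ichihara's paper \cite{Ich1} (whose own argument runs through comparison of the radial process of Brownian motion with one-dimensional diffusions, not through the route you take). So there is no internal proof to compare against, and your proposal has to be judged as a standalone argument; as such it is essentially correct, with the implicit (and necessary) assumption that $M^{2}$ is complete. Your second route is the standard clean proof: Huber's theorem gives that a complete surface with $\int_{M^{2}}|K|\,d\mu<\infty$ (in fact integrability of the negative part of $K$ suffices) is conformally a closed Riemann surface minus finitely many points; parabolicity in dimension two is a conformal invariant because $\Delta_{e^{2\phi}g}=e^{-2\phi}\Delta_{g}$ preserves the class of (super)harmonic functions; and a finitely punctured compact surface is parabolic since points are polar, so any candidate Green's function or nonconstant bounded superharmonic function would extend across the punctures. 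Your first route is also viable but is the heavier one: the quadratic area bound on each end is not a soft consequence of Cohn--Vossen alone, it is the Hartman--Shiohama asymptotic $\A(B_{r}(q))\sim \tfrac{1}{2}\bigl(2\pi\chi(M)-\int_{M}K\,d\mu\bigr)r^{2}$, which is itself a nontrivial theorem; once granted, the divergence of $\int^{\infty}r/\Vol(B_{r}(q))\,dr$ and the Grigor'yan--Karp--Lyons--Sullivan--Varopoulos criterion recalled in section \ref{subsecGlobal} do finish the job. In short: either route is acceptable, but both lean on substantial classical results (Huber, respectively Hartman--Shiohama), so the honest content of your proof is the reduction to those theorems rather than an elementary argument; the conformal route is the one to prefer, since it avoids the area estimate entirely.
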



\section{Examples and Benchmarking \\Surfaces of Revolution} \label{secExampSurfOfRev}
The examples we have in mind are classical but suitably modified to provide well defined
(and simple) extrinsic balls (discs) to exemplify our analysis.
We consider piecewise smooth radially mean $0-$convex surfaces
of revolution in $\mathbb{R}^{3}$ constructed as
follows. In the $(x, z)-$plane we consider the profile generating
curve consisting of a (possibly empty) line segment along the $x-$axis:
from $(0, 0)$ to $(a, 0)$ for some $a \in \, [\,0, \infty\,[$
together with a smooth curve $\Gamma(u) \, = \, (x(u), z(u))$, $u \in \, [\,0, c\,]$
for some $c \in \, ]\,0, \infty\,[\,$ with $x(u) \, > \, 0$ for all $u$ and $x(0) \, = \, a\,$.
The corresponding surfaces of revolution then (possibly) have a flat (bottom) disc of radius $a$,
see Figure 2.\\

\begin{figure}[t]
\includegraphics[width=20mm]{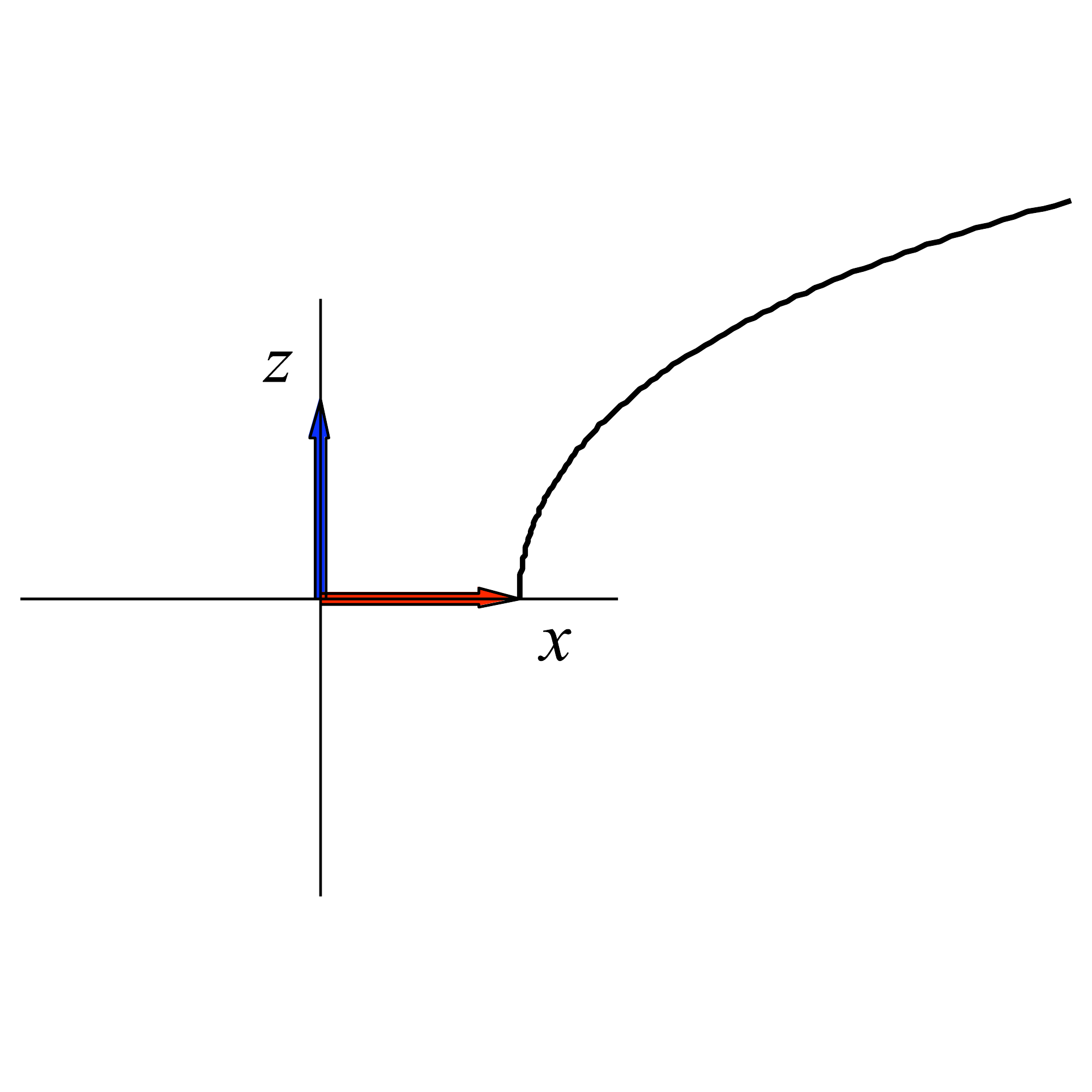} \quad
\includegraphics[width=20mm]{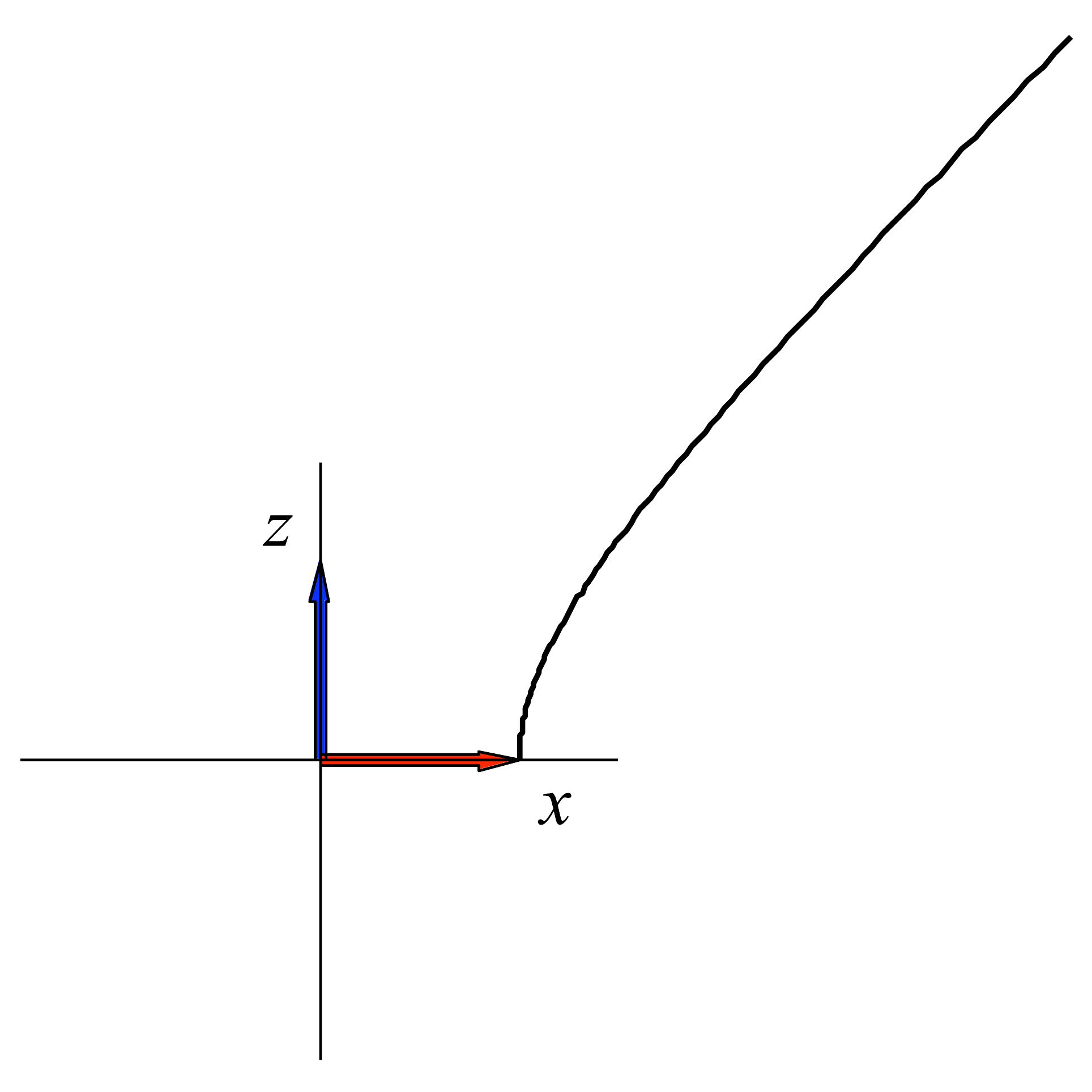} \quad
\includegraphics[width=20mm]{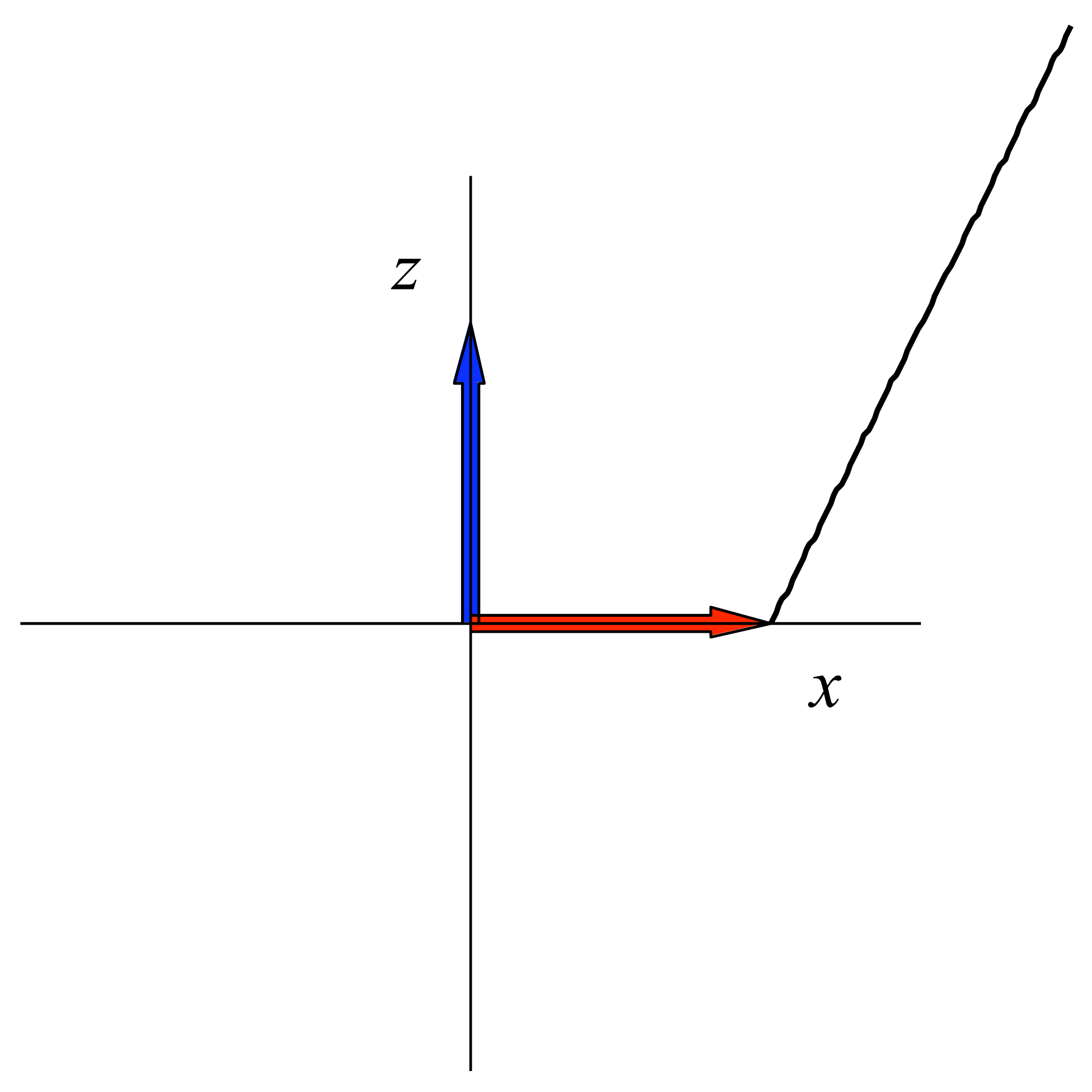} \quad
\includegraphics[width=20mm]{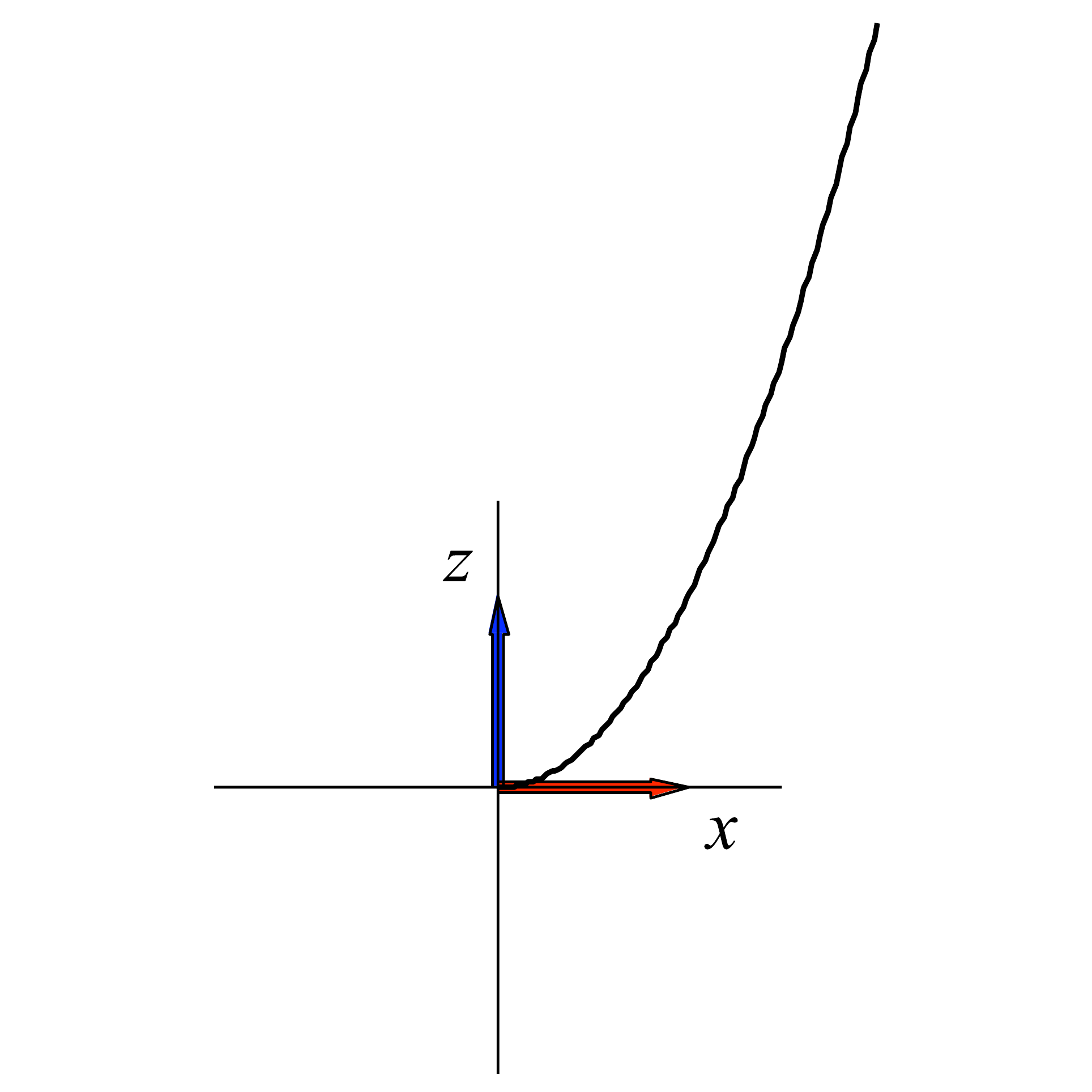} \quad
\includegraphics[width=20mm]{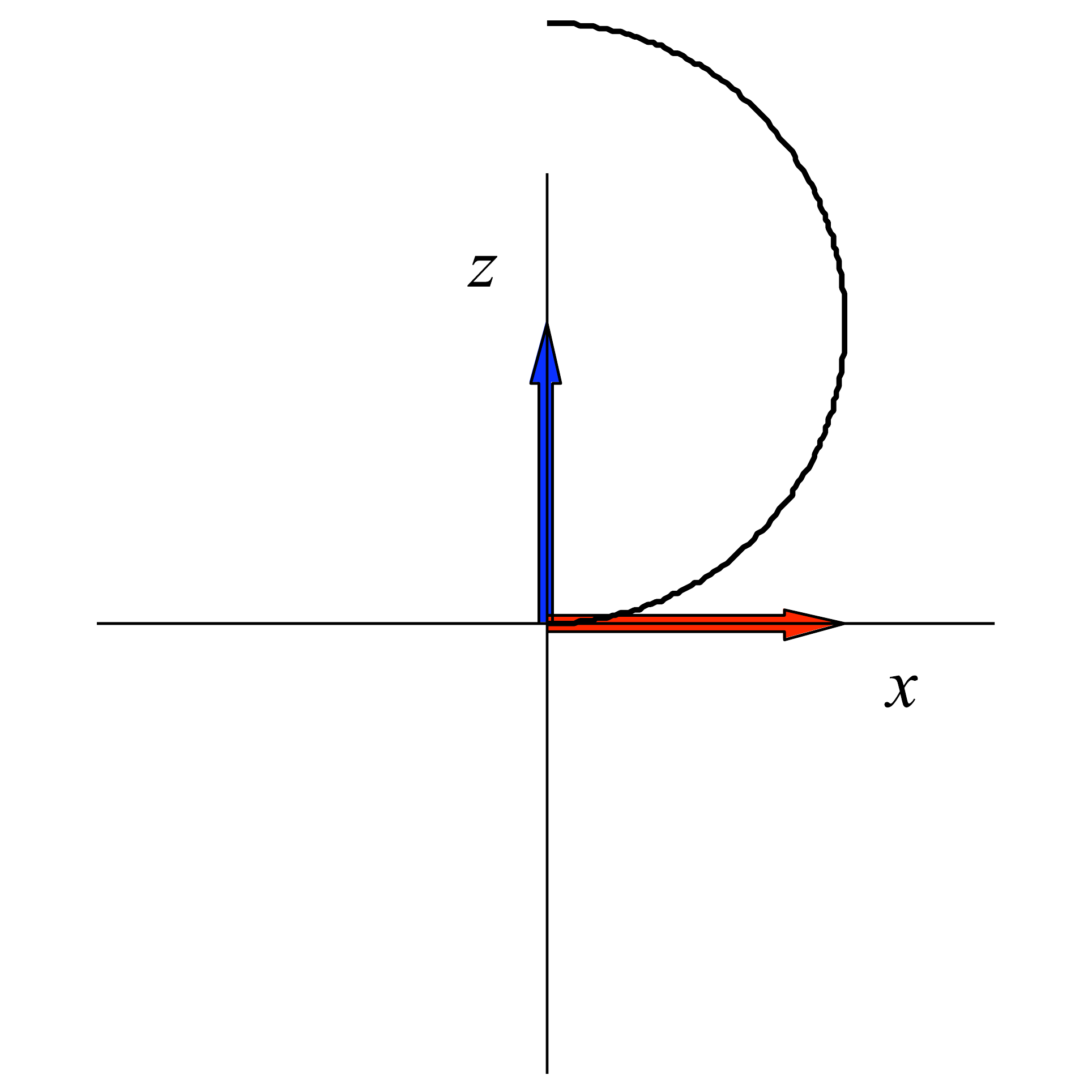} \quad \\
\includegraphics[width=20mm]{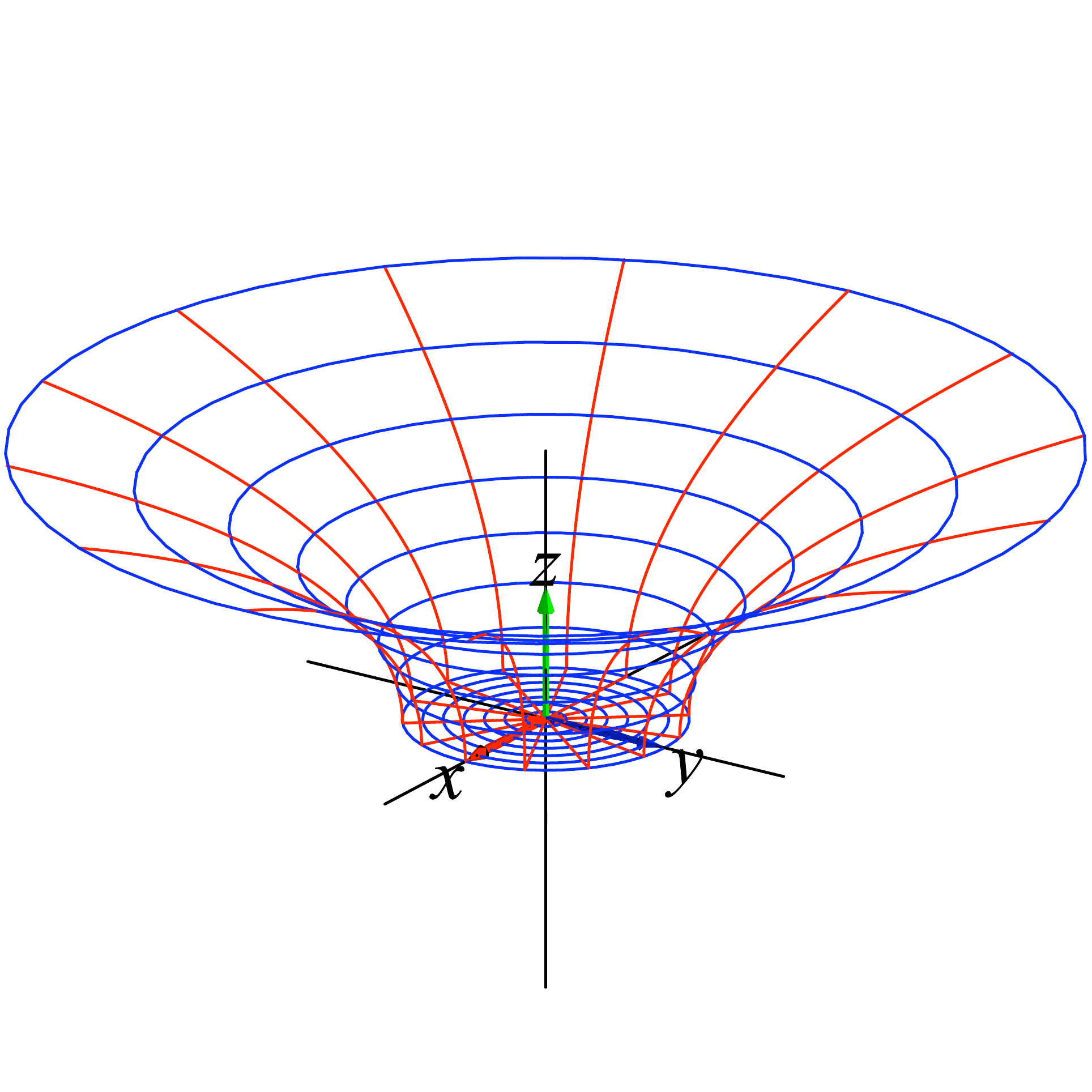} \quad
\includegraphics[width=20mm]{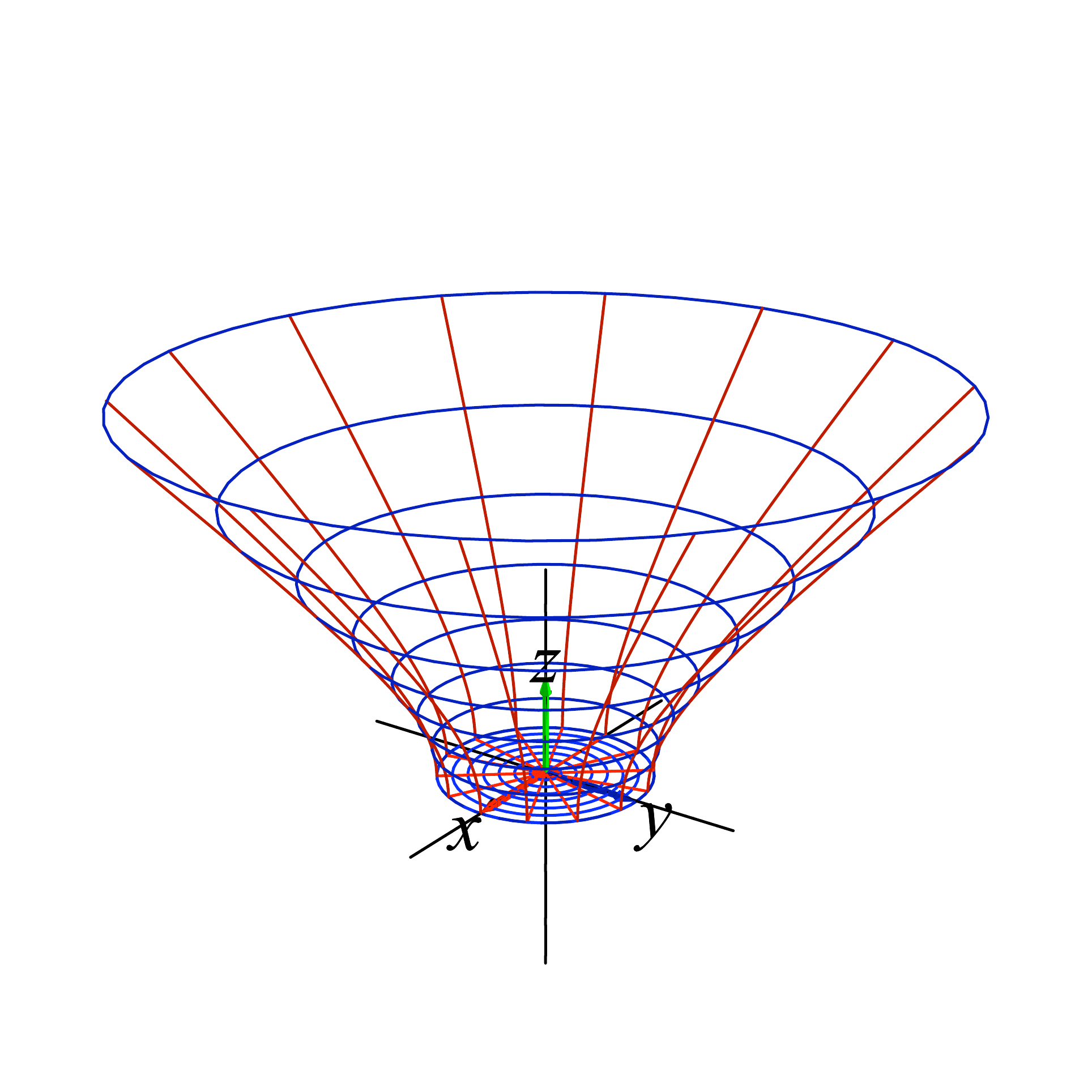} \quad
\includegraphics[width=20mm]{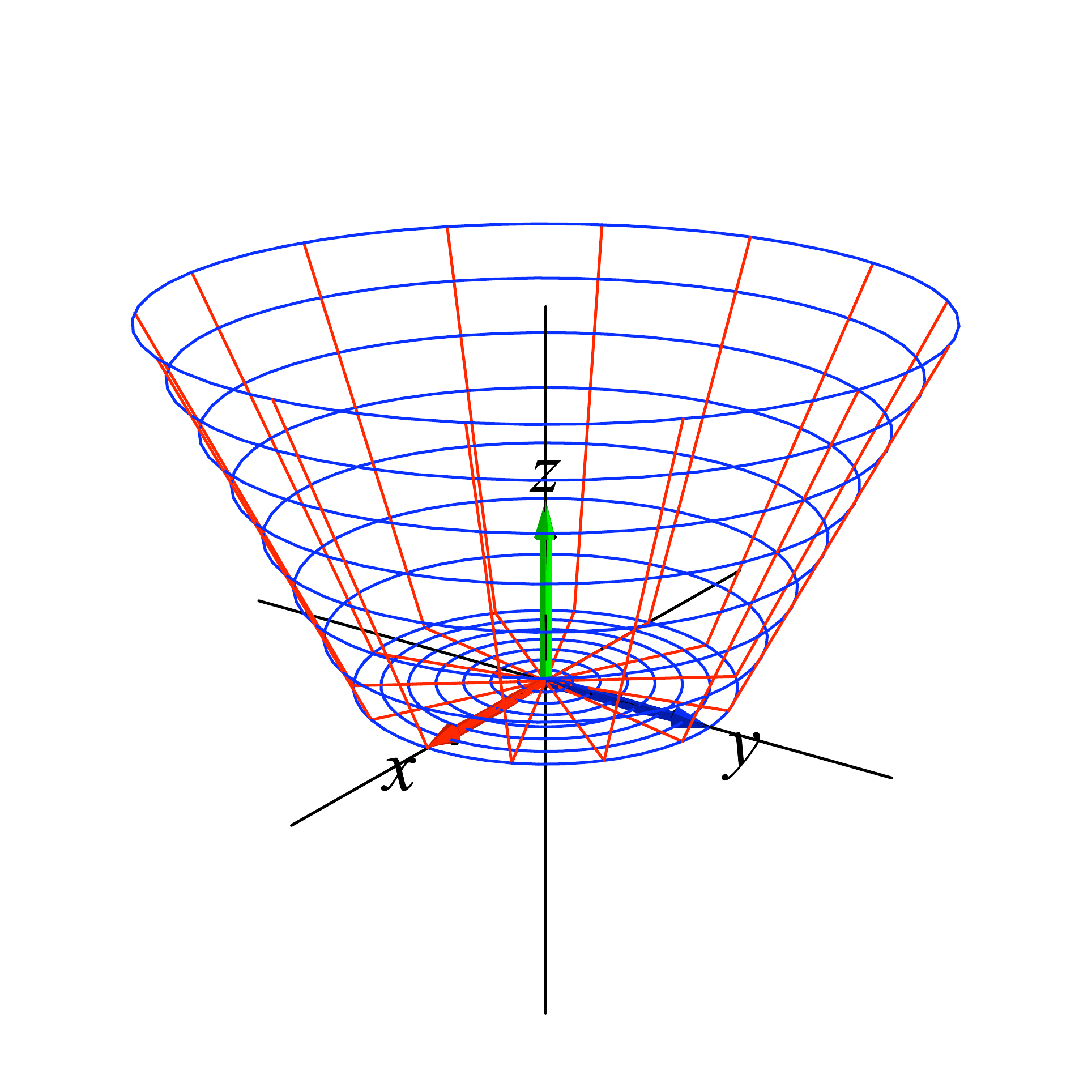} \quad
\includegraphics[width=20mm]{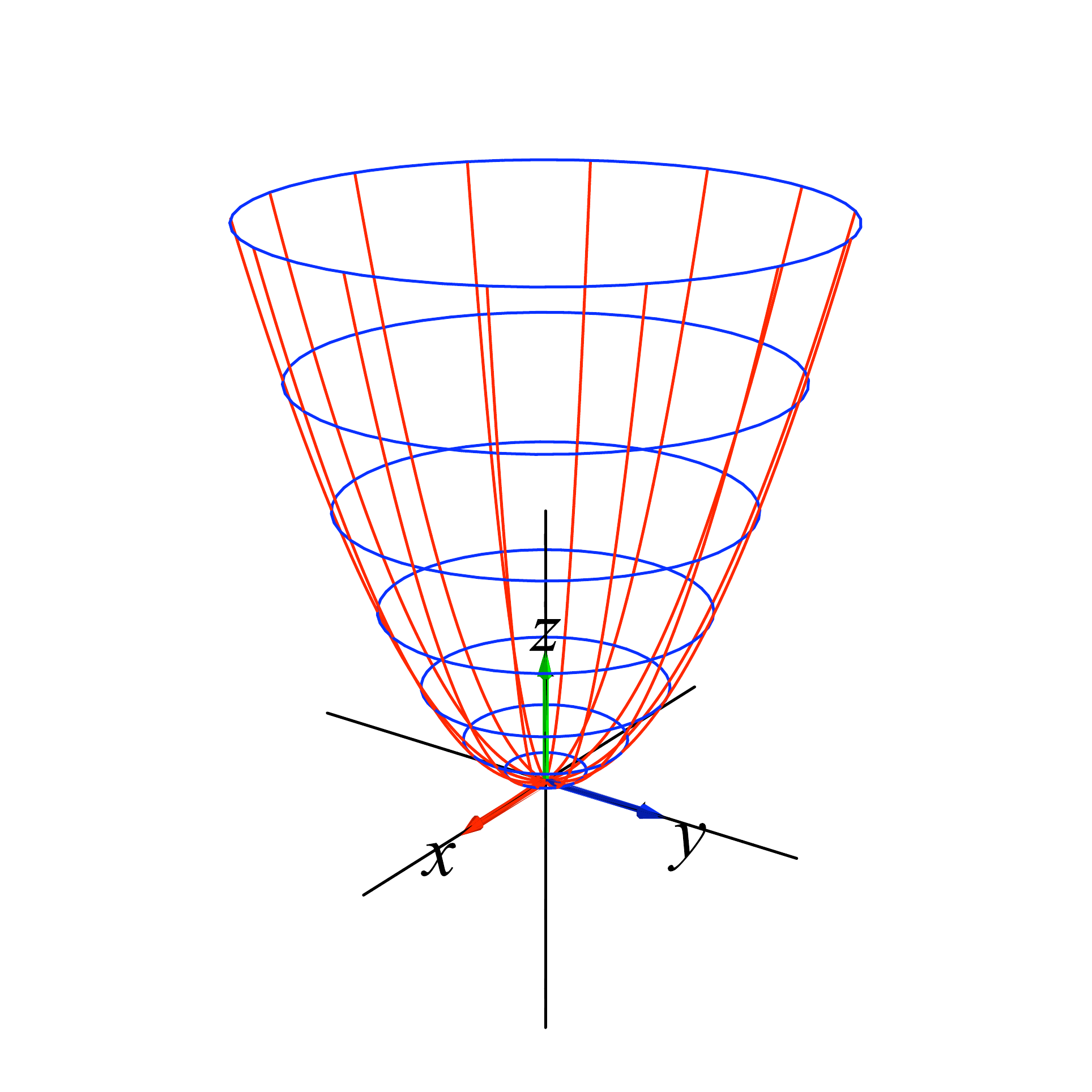} \quad
\includegraphics[width=20mm]{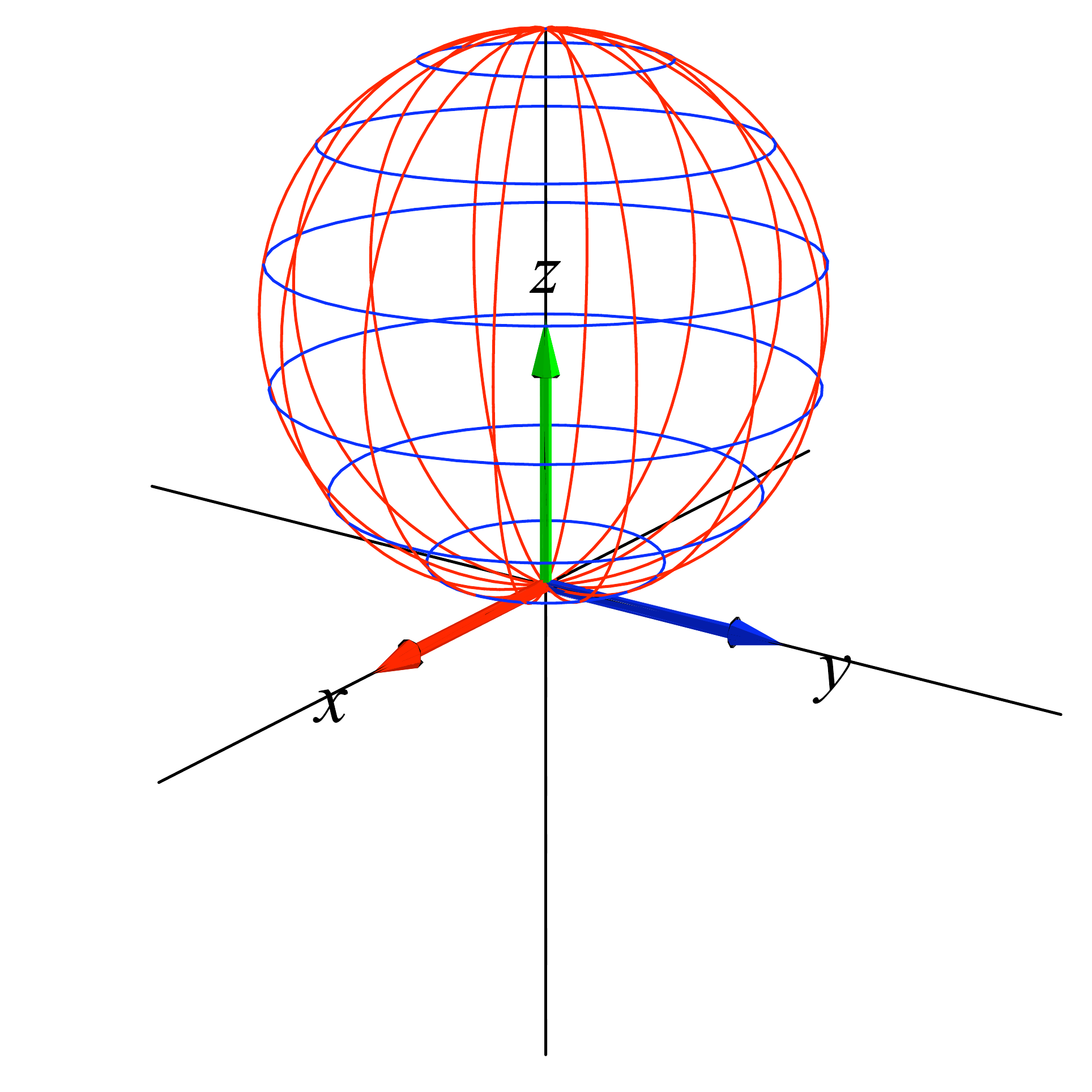}
\begin{center} \caption{Generating profile curves and the cor\-res\-pon\-ding surfaces of revolution.
Counting from the left the surfaces are constructed from a {\em{Catenoid}},
{\em{Hyperboloid of one sheet}}, {\em{Cone}}, {\em{Paraboloid}}, and a {\em{Sphere}}, respectively.
The three left-most surfaces
are completed with a flat bottom disc as shown. These surfaces are all radially mean $0-$convex from the Origin.
We illustrate in section \ref{secExampSurfOfRev} how the quite different criteria of Milnor, Ichihara,
as well as the condition of Corollary \ref{corAparab} in the present paper show that
all these surfaces are parabolic.}
\end{center} \label{figSurfsOfRev}
\end{figure}

The center of this disc, i.e. the origin $p\, = \, (0,0,0)$, will serve as the point from which the surfaces
under consideration will be $p-$radially mean $0-$convex as well as radially symmetric via the
specific choices of generating functions $x(u)$ and $z(u)$.\\

The extrinsic radius of the defining cutting sphere centered at $p$ is chosen to be
$R(c) \, > \, a$, so that the corresponding extrinsic disc containing $p$ consists of
the flat bottom disc of radius $a$ together with the following non-vanishing part of the surface of revolution:
\begin{equation*}
\Omega_{c}\,:\,\, r(u,v) \, = \, (x(u)\cos(v), x(u)\sin(v), z(u))\, , \,\,
u \in [\,0, c\,] \, , \,\, v \in [-\pi, \pi[ \, .
\end{equation*}
The area of this extrinsic disc is then a function of $a$ and $c$ as follows:
\begin{equation*}
\A(D_{R(c)})\, = \, A(a,c)\, = \, \pi\,a^2 + 2\,\pi\,\int_{0}^{c}x(t)\sqrt{x'^{\,2}(t) + z'^{\,2}(t)} \,\, dt \quad ,
\end{equation*}
and the length of its
boundary is simply
\begin{equation*}
 \Len(\partial D_{R(c)}) \, = \, L(a,c) \, = 2\pi\,x(c) \quad.
\end{equation*}
The exact isoperimetric quotient of the extrinsic disc of the resulting surface of revolution is thus
\begin{equation}
\mathcal{Q}\, = \, \mathcal{Q}(a,c) \, = \,
 \frac{x(c)}{(a^{2}/2) + \int_{0}^{c}x(t)\sqrt{x'^{\,2}(t) + z'^{\,2}(t)} \,\, dt} \quad .
\end{equation}

The main theorem of the present paper asserts that the quotient $\mathcal{Q}(a,c)$ is bounded from above by
the  right hand side of equation (\ref{eqIsopGeneralFirst2D}) - under the assumed condition, that the surface of revolution
is radially mean $0-$convex from $p\,$:
\begin{equation} \label{eqLambdaIneq}
\frac{2}{R(c)\,g(R(c))} \,
= \,\frac{2}{g(c)\,\sqrt{x^{2}(c)+z^2(c)}} \quad,
\end{equation}
where we use $g(u_{0})$ as shorthand for $g(R(u_{0}))$ which is the exact common radial tangency value for the surface
of revolution at each point of the circle $(x(u_{0})\cos(v), x(u_{0})\sin(v), z(u_{0}))$.
It is by definition the inner product:
\begin{equation}
\begin{aligned} \label{eqTan}
g(u_{0}) \, &= \, \langle \,\frac{\Gamma'(u_{0})}{\Vert \Gamma'(u_{0}) \Vert}\, , \,\,
\frac{\Gamma(u_{0})}{\Vert \Gamma(u_{0}) \Vert} \, \rangle \\
&= \, \frac{x(u_{0})\,x'(u_{0}) +  z(u_{0})\,z'(u_{0})}{\sqrt{x'^{\,2}(u_{0})+ z'^{\,2}(u_{0})}\,\,\sqrt{x^{2}(u_{0})+z^{2}(u_{0})}} \quad .
\end{aligned}
\end{equation}
Therefore the comparison upper bound reduces to:
\begin{equation} \label{eqUpperBdComp}
\frac{2}{R(c)\,g(R(c))} \,
= \,  \frac{2\,\sqrt{x'^{\,2}(c)+z'^{\,2}(c)}}{x(c)\,x'(c) +  z(c)\,z'(c)} \quad.
\end{equation}
The examples below then serve as illustrations of our main theorem within the category of
surfaces of revolution. Given the functions $x(u)$ and $z(u)$ we simply verify the following
inequality in each case:
\begin{equation}
\mathcal{Q}(a,c)\, = \, \frac{A(a,c)}{L(a,c)} \, \leq \, \frac{2}{R(c)\,g(c)} \quad ,
\end{equation}
which (as shown above) is equivalent to:
\begin{equation} \label{eqXZ}
\frac{x(c)}{(a^{2}/2) + \int_{0}^{c}x(t)\sqrt{x'^{\,2}(t) + z'^{\,2}(t)} \,\, dt} \, \leq \,
\frac{2\,\sqrt{x'^{\,2}(c)+z'^{\,2}(c)}}{x(c)\,x'(c) +  z(c)\,z'(c)} \quad .
\end{equation}
Simultaneously in our short presentations below we also illustrate how the
conditions of Corollary \ref{corAparab}, Theorems \ref{thmMilnorCrit} and \ref{thmIchiharaCrit} apply as well to prove
parabolicity of each one of the surfaces in question.


\begin{example} \label{exampCatenoid}
The {\em{Catenoid\,}} is a minimal surface and hence radially mean $0-$convex from any point on the surface;
It is also clearly radially mean $0-$convex from $p\,$ in its
truncated version considered here - completed with a "flat disc bottom" of radius $a\, = \, 1$.  We have in this particular case:
\begin{equation}
\begin{aligned}
x(u) \, &= \, \cosh(u) \\
z(u) \, &= \, u
\end{aligned}
\end{equation}
for $u \in \, [\,0, c\,]$.
Our isoperimetric inequality (in the form of (\ref{eqXZ})) is easily verified for all $c$.
The tangency function follows from equation (\ref{eqTan}) and is given by:
\begin{equation}
\mathcal{T}(u, v) \, = \, g(u) \, = \, \frac{\sinh(u)\cosh(u)+u}{\cosh(u)\sqrt{\cosh(u)^2+u^2}} \quad ,
\end{equation}
and the corresponding distance function to $p$, the origin, is:
\begin{equation}
r(u, v)\, = \, \sqrt{\cosh(u)^{2}+u^{2}} \quad ,
\end{equation}
It is a simple calculation to see that indeed the lower tangency bound of Corollary \ref{corAparab}, (\ref{eqCorA}),
holds true, so that we can conclude parabolicity of the truncated catenoid.
Moreover,
the Gaussian curvature of the catenoid is:
\begin{equation}
K(u,v) \, = \, -\frac{1}{\cosh^{4}(u)}
\end{equation}
The surface integral of the absolute curvature is clearly finite. A direct calculation gives:
\begin{equation}
\int_{M^{2}} \vert K \vert \,d\mu \, = \, 2\pi\int_{-\infty}^{\,\infty}\frac{1+\cosh(2u)}{2\cosh^{4}(u)}\, du \, = \, 4\pi \quad .
\end{equation}
Icihara's condition in Theorem \ref{thmIchiharaCrit} then also gives parabolicity.
To apply Milnor's condition, we calculate the arc length presentation:
\begin{equation}
s(u) \, = \, \sinh(u) \quad ,
\end{equation}
and parabolicity then follows from (\ref{eqMilnorCrit}), which in this case reads:
\begin{equation}
-\frac{1}{\cosh^{4}(u)} \, \geq \, -\,\frac{1}{\sinh^{2}(u)\log(\sinh(u))}  \quad .
\end{equation}

\end{example}


\begin{example} \label{exampHyperboloid}
The truncated and completed {\em{Hyperboloid of one sheet\,}} is also radially
mean $0-$convex from the center point $p$ of its flat bottom, although this is not
clear from its shape. Indeed, from the generating functions
\begin{equation}
\begin{aligned}
x(u) \, &= \, \sqrt{1+u^2} \\
z(u) \, &= \, u
\end{aligned}
\end{equation}
 a short
calculation reveals the following non-negative radial mean convexity function - see also remark \ref{remRadCon} :
\begin{equation}
\mathcal{C}(x) \, = \, h(u(x)) \,= \, \frac{u^{2}}{(2u^{2}+1)^{5/2}} \, \geq \, 0 \quad .
\end{equation}

The inequality (\ref{eqXZ}) holds true for all $b \, \geq \, 0$ and our isoperimetric inequality is
thus verified in this case.
The curvature is
\begin{equation}
K(u,v) \, = \, -\,\frac{1}{(1+2u^{2})^{2}}
\end{equation}
and the intrinsic distance is essentially:
\begin{equation}
s(u) \, = \, i\,E(-u\,i, \sqrt{2})\quad ,
\end{equation}
where $E(z, k)$ is the incomplete elliptic integral of the second kind.
Parabolicity of the standard hyperboloid of one sheet then follows from
Milnor's condition:
\begin{equation}
-\,\frac{1}{(1+2u^{2})^{2}} \, \geq \,   -\frac{1}{s^{2}(u)\log(s(u))}   \quad.
\end{equation}
The surface integral of the absolute value of the Gauss curvature is again finite.
A direct calculation gives:
\begin{equation}
\int_{M^{2}} \vert K \vert \,d\mu \, = \, 2\pi\int_{-\infty}^{\,\infty}\frac{1}{(1+2\,u^{2})^{3/2}}\, du \, = \, 2\pi\sqrt{2} \quad .
\end{equation}
The tangency function is
\begin{equation}
\mathcal{T}(u, v) \, = \, \frac{2u\sqrt{1+u^2}}{1+2u^{2}} \quad ,
\end{equation}
and the corresponding extrinsic distance function to the origin is:
\begin{equation}
r(u, v)\, = \, \sqrt{1+2u^{2}} \quad .
\end{equation}
It is a simple calculation to see that indeed the lower tangency bound of (\ref{eqCorA})
holds true, so that parabolicity also stems from that condition.\\

\end{example}


\begin{remark} \label{remRadCon}
Before considering the convex surfaces of revolution below - the paraboloids and spheres - we
note in passing, that for surfaces of revolution of the type considered above, i.e.
those with $z(u) \, = \, u$, we have the following differential inequality condition on $x(u)$, which is equivalent to
radial mean $0-$convexity from $p\,$ for the corresponding surface of revolution, $\mathcal{C}(u) \, \geq \, 0$:
\begin{equation} \label{eqConditRadCon}
\left(x(u)\,x''(u) - x'^{\,2}(u) -1\right)\left(x(u)- u\,x'(u)\right) \, \geq \, 0 \quad .
\end{equation}
In particular, for the hyperboloid, $x(u) \, = \, \sqrt{1+u^2}$, the left hand side of this
inequality reads $\, 2\,u^2/(1+u^2)^{3/2}\, > \, 0$. Moreover, we note that equality is obtained in the
condition (\ref{eqConditRadCon}) (assuming $x(0)\,=\,1$ and $x'(0)\,=\,0$) precisely when
$x(u)\, = \, \cosh(u)$, i.e. for the catenoid.\\
\end{remark}


\begin{example} \label{exampCone}
The truncated and completed {\em{Cone}} in Figure 2 is  radially mean $0-$convex from the point $p$.  We have:
\begin{equation}
\begin{aligned}
x(u) \, &= \, 1+ u\cos(\theta) \\
z(u) \, &= \, u\sin(\theta)
\end{aligned}
\end{equation}
for $u \in \, [\,0, c\,]$ and any constant $\theta \in \, [\,-\pi/2\, , \, \pi/2\,]$.
The inequality (\ref{eqXZ}) holds true for all $c \, \geq \, 0$ and all $\theta$,
thus verifying our main isoperimetric
inequality.
The tangency function is:
\begin{equation}
\mathcal{T}(u, v) \, = \,  \frac{\cos(\theta)+u}{\sqrt{1+2u\cos(\theta)+u^2}} \quad ,
\end{equation}
and the corresponding distance function to the origin is:
\begin{equation}
r(u, v)\, = \, \sqrt{1+2u\cos(\theta)+u^{2}} \quad .
\end{equation}
Again it is a simple calculation to see that the lower tangency bound of (\ref{eqCorA})
holds true:
\begin{equation}
\frac{\cos(\theta)+u}{\sqrt{1+2u\cos(\theta)+u^2}}\, \geq \,
\sqrt{\frac{\log(1+2u\cos(\theta)+u^{2})}{\log(1+2u\cos(\theta)+u^{2})+1}} \quad ,
\end{equation}
so that parabolicity follows from our Corollary \ref{corAparab}. Milnor's criterion is clearly satisfied as well
since the Gaussian curvature is non-negative.
Also, the (truncated) cones clearly  have finite total absolute curvature, so Ichihara's criterion applies immediately as well.
\end{example}


\begin{example} \label{exampParab}
The {\em{Paraboloid}} is clearly convex and hence radially mean $0-$convex from the top point $p\,$:
\begin{equation}
\begin{aligned}
x(u) \, &= \, u \\
z(u) \, &= \, \alpha\,u^{2}
\end{aligned}
\end{equation}
for $u \in \, [\,0, c\,]$ and any constant $\alpha \in \mathbb{R}$.
The inequality (\ref{eqXZ}) holds true for all $c \, \geq \, 0$ and all $\alpha$.
The tangency function is:
\begin{equation}
\mathcal{T}(u, v) \, = \, \frac{1+2\alpha^{2}u^{2}}{\sqrt{4\alpha^{2}u^{2}+1}\,\,\sqrt{1+\alpha^{2}u^{2}}} \quad ,
\end{equation}
and the since corresponding distance function to the origin is
\begin{equation}
r(u, v)\, = \, u\,\sqrt{1+\alpha^{2}u^{2}} \quad ,
\end{equation}
it is a simple calculation to see that indeed the lower tangency bound of (\ref{eqCorA})
holds true, so that parabolicity follows.
\end{example}


\begin{example} \label{exampSphere}
The round {\em{Sphere}} is clearly  mean $0-$convex from the south pole $p$; we have:
\begin{equation}
\begin{aligned}
x(u) \, &= \, -\sin(u-\pi) \\
z(u) \, &= \, 1+ \cos(u-\pi)
\end{aligned}
\end{equation}
for $u \in \, [\,0, c\,]$, $c \, \leq \, \pi\,$.
In this interval the inequality (\ref{eqXZ}) reduces to:
\begin{equation}
\frac{\sin(c)}{1-\cos(c)} \, \leq \, \frac{2}{\sin(c)} \quad ,
\end{equation}
which is clearly satisfied for all $c \in \, [\,0, \pi\,]$, so that we have again thereby verified
our main isoperimetric inequality (\ref{eqIsopGeneralFirst}) in the reduced form of (\ref{eqIsopGeneralFirst2D}).
Parabolicity follows in this (extreme) case from mere compactness of the complete surface of a sphere.\\

\end{example}


\section{Geometric Bounds from Below \\ and Model Space Carriers} \label{secGeoBounds}


\subsection{Lower Mean Convexity Bounds } \label{secConvTang}
Given an immersed, complete $m-$di\-men\-sio\-nal submanifold
$P^m$ in a complete Riemannian manifold $N^n$
with a pole $p$, we denote the distance function
from $p$ in the ambient space $N^{n}$ by $r(x) =
\dist_{N}(p, x)$ for all $x \in N$. Since $p$ is
a pole there is - by definition - a unique
geodesic from $x$ to $p$ which realizes the
distance $r(x)$. We also denote by $r$ the
restriction $r\vert_P: P\longrightarrow \erre_{+}
\cup \{0\}$. This restriction is then called the
extrinsic distance function from $p$ in $P^m$.
The corresponding extrinsic metric balls of
(sufficiently large) radius $R$ and center $p$
are denoted by $D_R(p) \subseteq P$ and defined
as follows:
$$D_{R}(p) = B_{R}(p) \cap P =\{x\in P \,|\, r(x)< R\} \quad ,$$
where $B_{R}(p)$ denotes the geodesic $R$-ball around the pole $p$ in $N^n$. The
extrinsic ball $D_R(p)$ is assumed throughout to be a connected,  pre-compact domain in $P^m$ which contains the
pole $p$ of the ambient space. Since $P^{m}$ is (unless the contrary is clearly stated) assumed to be unbounded in $N$ we
have for every sufficiently large $R$ that $B_{R}(p) \cap P \neq P$.

In order to control the mean curvatures $H_P(x)$ of $P^{m}$ at distance $r$ from
$p$ in $N^{n}$ we introduce the following definition:

\begin{definition} The $p$-radial mean curvature function for $P$ in $N$
is defined in terms of the inner product of $H_{P}$ with the $N$-gradient of the
distance function $r(x)$ as follows:
$$
\mathcal{C}(x) = -\langle \nabla^{N}r(x),
H_{P}(x) \rangle  \quad {\textrm{for all}}\quad x
\in P \,\, .
$$

We say that the submanifold $P$ satisfies a {\it radial mean convexity
 condition} from $p\in P$ when we have a smooth function
 $h: P \mapsto \erre\,\, ,$ such that
\begin{equation}
\mathcal{C}(x) \, \geq \, h(r(x))  \quad {\textrm{for all}}
\quad x \in P \,\, .
\end{equation}
The submanifold $P$ is called {\it radially mean $C-$convex} when
its $p-$radial mean curvature is bounded from below by the constant $C$.
Minimal submanifolds and convex hypersurfaces are radially mean $0-$convex, cf. the following remark.
\end{definition}

\begin{remark} \label{remHypCon}
A hypersurface $P^{n-1}$ in $\erre^{n}$ is said to be {\em{convex}} if, for every $q\in P$, the tangent
hyperplane $T_qP$  of $P$ at $q$ does not separate $P$ into two parts, see \cite[Vol. II, p. 40\,]{KobNom},
and \cite[Vol III, p. 93\,]{Sp}. Convexity extends to hypersurfaces in $\mathbb{K}^{n}_{b}$, $b\neq 0$, if
in the above definition we replace 'the tangent
hyperplane' by the totally geodesic
hypersurface $\exp_{q}(T_{q}P)$ in $\mathbb{K}^{n}_{b}$, see
\cite[Vol IV, pp.121-123\,]{Sp}.

A classical generalization of Hadamard's Theorem states: If $P$ is a convex hypersurface in
$\mathbb{K}^{n}_{b}$, then the Weingarten map of $P$, $L_{\xi}$, is semi-negative definite at  all points $q\in P$, see  \cite[vol IV, pp.121-123 and vol III, pp. 91-95\,]{Sp} and \cite{DCW}.
Here $\xi$ denotes the outward pointing unit normal of the hypersurface. Therefore
all the principal curvatures of $P$ will be non-positive according to the orientation
chosen for $P$. Given $x \in P$, the outward pointing unit normal $\xi(x)$, and the mean curvature normal vector $H_{P}(x)$ to $P$ at $x$ we then obtain radial mean $0-$convexity of $P$ from any given point $p \, \in P\,$:
\begin{equation}
\begin{aligned}
\mathcal{C}(x)\, &= \, -\langle \,H_{P}(x)\, ,\,\,\nabla^{\mathbb{K}^{n}_{b}} r(x)\,\rangle \\
&\, =\, \frac{1}{n-1}\sum_{i=1}^{n-1}\vert k_i(x)\vert\langle\,\xi(x),\nabla^{\mathbb{K}^{n}_{b}} r(x)\,\rangle \\
& \, \geq \, 0 \quad ,
\end{aligned}
\end{equation}
where $k_i(x)$ are the principal curvatures of $P$ at $x$ and we are using Proposition 2.3 in \cite{Pa1}.
\end{remark}


\subsection{Lower Tangency Bounds} \label{secLowerTangBound}

The final notion needed to describe our comparison setting is the idea of {\it radial tangency}.
If we denote by $\nabla^N r$ and $\nabla^P r$ the
 gradients of
$r$ in $N$ and $P$ respectively, then let us first remark that $\nabla^P r(q)$ is
just the tangential component in $P$ of $\nabla^N r(q)$, for all $q\in P$. Hence
we have the following basic relation:
\begin{equation}\label{eq2.1}
\nabla^N r =
\nabla^P r +(\nabla^P r)^\bot \quad ,
\end{equation}
where $(\nabla^P r)^\bot(q)$ is perpendicular to $T_qP$ for all $q\in P$.

Considering the extrinsic disc $D_{r(x)} \subset P$ and the outward pointing unit normal vector $\nu(x)$ to
the extrinsic sphere $\partial D_{r(x)}$, then
\begin{equation}
\nabla^{P}r(x) \, = \, \langle \nabla^{N}r(x) , \nu(x) \rangle\,\nu(x) \quad ,
\end{equation}
so that
$\Vert \nabla^{P}r(x)\Vert$ measures the local {\em{tangency}} to $P$ at $x$ of the geodesics
issuing from $p$. Full tangency means
$\Vert \nabla^{P}r(x)\Vert\, = \, \langle \nabla^{N}r(x) , \nu(x) \rangle \, = \, 1$,
i.e. $\nabla^{N}r(x)\, = \, \nu(x)$ and minimal tangency
means orthogonality, i.e. $\Vert \nabla^{P}r(x)\Vert\, = \, \langle \nabla^{N}r(x) , \nu(x) \rangle \, = \, 0\,$.\\

In order to control this tangency of geodesics to the submanifold $P$ we introduce the following

\begin{definition} \label{defRadTan}
We say that the submanifold $P$ satisfies a {\it radial tangency
 condition} from $p\in P$ when we have a smooth positive function
 $g: P \mapsto \erre_{+} \,\, ,$ such that
\begin{equation}
\mathcal{T}(x) \, = \, \Vert \nabla^P r(x)\Vert
\geq g(r(x)) \, > \, 0  \quad {\textrm{for all}}
\quad x \in P \,\, .
\end{equation}
\end{definition}


\subsection{Auxiliary Model Spaces} \label{secModel}
The concept of a model space is of instrumental importance for the precise
statements of our comparison results. We therefore consider the
definition and some first well-known properties in some detail:

\begin{definition}[See \cite{Gri}, \cite{GreW}]
\label{defModel}  A $w-$model space $M_{w}^{m}$
is a smooth warped product with base $B^{1} =
[\,0,\, R[ \,\,\subset\, \mathbb{R}$ (where $\, 0
< R \leq \infty$\,), fiber $F^{m-1} =
S^{m-1}_{1}$ (i.e. the unit $(m-1)-$sphere with
standard metric), and warping function $w:\,
[\,0, \,R[\, \to \mathbb{R}_{+}\cup \{0\}\,$ with
$w(0) = 0$, $w'(0) = 1$, and $w(r) > 0\,$ for all
$\, r > 0\,$. The point $p_{w} = \pi^{-1}(0)$,
where $\pi$ denotes the projection onto $B^1$, is
called the {\em{center point}} of the model
space. If $R = \infty$, then $p_{w}$ is a pole of
$M_{w}^{m}$.
\end{definition}

\begin{remark}\label{propSpaceForm}
The simply connected space forms
$\mathbb{K}^{m}_{b}$ of constant curvature $b$ can
be constructed as  $w-$models with any given
point as center point using the warping functions
\begin{equation}
w(r) = Q_{b}(r) =\begin{cases} \frac{1}{\sqrt{b}}\sin(\sqrt{b}\, r) &\text{if $b>0$}\\
\phantom{\frac{1}{\sqrt{b}}} r &\text{if $b=0$}\\
\frac{1}{\sqrt{-b}}\sinh(\sqrt{-b}\,r) &\text{if
$b<0$} \quad .
\end{cases}
\end{equation}
Note that for $b > 0$ the function $Q_{b}(r)$
admits a smooth extension to  $r = \pi/\sqrt{b}$.
For $\, b \leq 0\,$ any center point is a pole.
\end{remark}

\begin{proposition}[See \cite{O'N} p. 206]\label{propWarpMean}
Let $M_{w}^{m}$ be a $w-$model with war\-ping
function $w(r)$ and center $p_{w}$. The distance
sphere of radius $r$ and center $p_{w}$ in
$M_{w}^{m}$, denoted as $S^w_r$, is the fiber
$\pi^{-1}(r)$. This distance sphere has the
following constant mean curvature vector in
$M_{w}^{m}$
\begin{equation}
H_{{\pi^{-1}}(r)} = -\eta_{w}(r)\,\nabla^{M}\pi =
-\eta_{w}(r)\,\nabla^{M}r \quad ,
\end{equation}
where the mean curvature function $\eta_{w}(r)$
is defined by
\begin{equation}\label{eqWarpMean}
\eta_{w}(r)  = \frac{w'(r)}{w(r)} =
\frac{d}{dr}\ln(w(r))\quad .
\end{equation}
\end{proposition}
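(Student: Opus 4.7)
The plan is to verify the formula by a direct computation in the warped product structure of $M_w^m = [\,0,R\,[\times_w S^{m-1}$, whose Riemannian metric takes the form $ds^2 = dr^2 + w(r)^2\, d\sigma_{m-1}^2$ with $d\sigma_{m-1}^2$ the standard round metric on the unit sphere. I would first identify $\nabla^M r = \partial_r$ as a globally defined unit vector field which is orthogonal to each fiber $\pi^{-1}(r) = S^w_r$, and which therefore furnishes a smooth choice of outward unit normal along every distance sphere simultaneously.

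Next I would invoke O'Neill's standard connection formulas for a warped product (\cite{O'N}, pp.~204--206): for a vector field $X$ lifted from the fiber and $V = \partial_r$ lifted from the base one has
$$\nabla^M_X \partial_r \,=\, \frac{w'(r)}{w(r)}\, X \,=\, \eta_w(r)\, X.$$
From this the vector-valued second fundamental form $B$ of $S^w_r$ is immediately umbilical: for $X, Y$ tangent to the fiber, using $\langle Y, \partial_r\rangle = 0$,
$$B(X, Y) \,=\, \bigl(\nabla^M_X Y\bigr)^{\perp} \,=\, \langle \nabla^M_X Y, \partial_r\rangle\,\partial_r \,=\, -\langle Y, \nabla^M_X \partial_r\rangle\,\partial_r \,=\, -\eta_w(r)\,\langle X, Y\rangle\,\partial_r.$$
Tracing over an orthonormal frame of the $(m-1)$-dimensional fiber and dividing by $m-1$ then yields
$$H_{\pi^{-1}(r)} \,=\, \frac{1}{m-1}\sum_{i=1}^{m-1} B(e_i, e_i) \,=\, -\eta_w(r)\,\partial_r \,=\, -\eta_w(r)\,\nabla^M r,$$
which is exactly the asserted identity. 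Constancy of $H$ along each fiber is automatic from the radial symmetry of the model, since $\eta_w(r)$ depends only on $r$.

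The argument is essentially a bookkeeping exercise in warped product geometry and there is no substantial obstacle. The only point requiring care is the sign and normalisation convention for the mean curvature vector: one must normalise by $1/(m-1)$ and keep track of the sign in $B(X,Y) = (\nabla^M_X Y)^{\perp}$ so as not to acquire a spurious factor of $m-1$ or a flipped sign when invoking O'Neill's formulas. With the standard convention used throughout the paper, compatible with the computation of $\mathcal{C}$ for convex hypersurfaces in Remark~\ref{remHypCon}, the result reduces to the stated identity.
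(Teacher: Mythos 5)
Your computation is correct: the warped-product formula $\nabla^M_X\partial_r=\eta_w(r)X$, the resulting umbilicity of the fibers, and the normalised trace give exactly $H_{\pi^{-1}(r)}=-\eta_w(r)\nabla^M r$, with the sign and the $1/(m-1)$ normalisation handled consistently with the paper's conventions. The paper offers no proof of its own, only the citation to O'Neill, and your argument is precisely the standard computation found there, so nothing further is needed.
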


In particular we have for the constant curvature
space forms $\mathbb{K}^{m}_{b}$:
\begin{equation}\label{eqHbversusWarp}
\eta_{Q_{b}}(r) = \begin{cases} \sqrt{b}\cot(\sqrt{b}\,r) &\text{if $b>0$}\\
\phantom{\sqrt{b}} 1/r &\text{if
$b=0$}\\\sqrt{-b}\coth(\sqrt{-b}\,r) &\text{if
$b<0$} \quad .
\end{cases}
\end{equation}

\begin{definition}
Let $p$ be a point in a Riemannian manifold $M$
and let $x \in M-\{ p \}$. The sectional
curvature $K_{M}(\sigma_{x})$ of the two-plane
$\sigma_{x} \in T_{x}M$ is then called a
\textit{$p$-radial sectional curvature} of $M$ at
$x$ if $\sigma_{x}$ contains the tangent vector
to a minimal geodesic from $p$ to $x$. We denote
these curvatures by $K_{p, M}(\sigma_{x})$.
\end{definition}

\begin{proposition}[See \cite{GreW} and \cite{Gri}] \label{propModelRadialCurv}
Let $M_{w}^{m}$ be a $w-$model with center point
$p_{w}$. Then the $p_{w}$-radial sectional
curvatures of $M_{w}^{m}$ at every $x \in
\pi^{-1}(r)$ (for $\,r\, >\, 0\,$) are all
identical and determined by the radial function
$\,K_{w}(r)\,$ defined as follows:
\begin{equation}
K_{p_{w} , \tN_{w}}(\sigma_{x}) \, = \, K_{w}(r)
\, = \, -\frac{w''(r)}{w(r)} \quad.
\end{equation}
\end{proposition}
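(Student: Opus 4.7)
The plan is to reduce everything to O'Neill's standard warped-product curvature formulas, from which both assertions of the proposition drop out at once. First I would identify the $p_w$-radial 2-planes geometrically. Since $M_w^m = [0,R)\times_w S^{m-1}_1$ has $p_w = \pi^{-1}(0)$ as a pole, the unique minimizing geodesic from $p_w$ to any $x\in \pi^{-1}(r)$ is the integral curve through $x$ of the radial gradient $\nabla^{M_w} r = \partial_r$. Consequently a 2-plane $\sigma_x\subset T_x M_w^m$ is $p_w$-radial precisely when $\partial_r|_x \in \sigma_x$.

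Next I would complete $\partial_r|_x$ to an orthonormal basis $\{\partial_r, V\}$ of $\sigma_x$. Because $\partial_r$ is the horizontal lift of the base direction and is orthogonal to the fibers, the unit vector $V$ is automatically tangent to the fiber $\pi^{-1}(r) = S^{m-1}_{w(r)}$. This normalization is what will make the curvature formula produce an answer that visibly does not depend on which $V$ one chose.

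The key step is then the invocation of the standard warped-product curvature identity: for any $V$ tangent to the fiber,
\begin{equation*}
R^{M_w}(V,\partial_r)\partial_r \, = \, -\,\frac{w''(r)}{w(r)}\,V \quad ,
\end{equation*}
which is the specialization to a one-dimensional base with coordinate $r$ of the general formula $R(V,X)X = -\,\Hess_B w(X,X)\,V/w$ for horizontal $V$ and vertical $X$. Taking the sectional curvature yields
\begin{equation*}
K_{p_w,M_w^m}(\sigma_x) \, = \, \langle R^{M_w}(V,\partial_r)\partial_r, V\rangle \, = \, -\,\frac{w''(r)}{w(r)} \quad,
\end{equation*}
which is manifestly independent of the choice of unit vector $V$ completing $\partial_r$ to a basis of $\sigma_x$. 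This simultaneously establishes that all $p_w$-radial sectional curvatures at $x$ coincide, and that their common value is the stated radial function $K_w(r)$.

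The main obstacle, if one wanted a fully self-contained proof, would be deriving the warped-product curvature identity above from the Koszul formula and the Christoffel symbols of the warped metric $dr^2 + w(r)^2 g_{S^{m-1}_1}$. However, this computation is classical (see O'Neill pp.~209--210) and is exactly the sibling of the mean curvature computation already cited from O'Neill in Proposition \ref{propWarpMean}; invoking it here is entirely parallel to that earlier usage, so I would simply quote it rather than reproduce it.
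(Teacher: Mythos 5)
Your argument is correct and coincides with the classical computation that the paper itself does not reproduce but delegates to the cited references (Greene--Wu, Grigor'yan; cf.\ O'Neill): a $p_w$-radial plane contains $\partial_r$, and the warped-product identity $R(V,\partial_r)\partial_r=-\bigl(w''(r)/w(r)\bigr)V$ for fiber-tangent $V$ gives the common value $-w''(r)/w(r)$ independently of the choice of $V$. One small terminological slip: in your quoted general formula you label $V$ horizontal and $X$ vertical, whereas with base $[\,0,R\,[$ the radial direction $\partial_r$ is the horizontal (base) one and the fiber direction $V$ is vertical; the identity as you actually apply it is nonetheless the correct one, so this does not affect the proof.
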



For any given warping function $\,w(r)\,$ we
introduce the isoperimetric quotient function
$\,q_{w}(r)\,$  for the corresponding $w-$model
space $\,M_{w}^{m}\,$ as follows:
\begin{equation} \label{eqDefq}
q_{w}(r) \, = \,
\frac{\Vol(B_{r}^{w})}{\Vol(S_{r}^{w})} \, = \,
\frac{\int_{0}^{r}\,w^{m-1}(t)\,dt}{w^{m-1}(r)}
\quad ,
\end{equation}
where $\,B_{r}^{w}\,$ denotes the polar centered
geodesic $r-$ball of radius $\,r\,$ in
$\,M_{w}^{m}\,$ with boundary sphere
$\,S_{r}^{w}\,$.

When discussing isoperimetric comparison inequalities the warping function $\,w\,$ usually
appears raised to the power $\,m-1\,$. Accordingly we will use
the notion $\,a_{w}(r)\,$ to denote this power of
$\,w\,$ ($V_{0}$ denotes the volume of the unit sphere in $\mathbb{R}^{m}$):
\begin{equation}
\begin{aligned}
a_{w}(r) \, &= \, \Vol(S_{r}^{w})/V_{0} \, = \, w^{m-1}(r)\,\, , \quad \textrm{so that}\\
q_{w}(r) \, &=\,
\frac{\int_{0}^{r}\,a_{w}(t)\,dt}{a_{w}(r)} \quad
.
\end{aligned}
\end{equation}

\subsection{Lower Curvature Bounds}
\label{secLowerCurv}

The 2.nd order analysis of the restricted distance function $r_{|_{P}}$ is firstly and foremost governed by the
following Hessian comparison theorem for manifolds with a pole:

\begin{theorem}[See \cite{GreW}, Theorem A]\label{thmGreW}
Let $N^{n}$ be a manifold with a pole $p$, let $M_{w}^{m}$ denote a $w-$model
with center $p_{w}$. Suppose that every $p$-radial sectional curvature at $x \in
N - \{p\}$ is bounded from above by the $p_{w}$-radial sectional curvatures in
$M_{w}^{m}$ as follows:
\begin{equation}\label{eqKbound}
\mathcal{K}(\sigma(x)) \, = \, K_{p,
N}(\sigma_{x}) \geq -\frac{w''(r)}{w(r)}
\end{equation}
for every radial two-plane $\sigma_{x} \in T_{x}N$ at distance $r = r(x) =
\dist_{N}(p, x)$ from $p$ in $N$. Then the Hessian of the distance function in
$N$ satisfies
\begin{equation}\label{eqHess}
\begin{aligned}
\Hess^{N}(r(x))(X, X) &\leq \Hess^{M_{w}^{m}}(r(y))(Y, Y)\\ &=
\eta_{w}(r)\left(1 - \langle \nabla^{M}r(y), Y \rangle_{M}^{2} \right) \\ &=
\eta_{w}(r)\left(1 - \langle \nabla^{N}r(x), X \rangle_{N}^{2} \right)
\end{aligned}
\end{equation}
for every unit vector $X$ in $T_{x}N$ and for every unit vector $Y$ in $T_{y}M$
with $\,r(y) = r(x) = r\,$ and $\, \langle \nabla^{M}r(y), Y \rangle_{M} =
\langle \nabla^{N}r(x), X \rangle_{N}\,$.
\end{theorem}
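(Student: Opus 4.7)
The plan is to reduce this Hessian comparison to a one-dimensional Jacobi-field (equivalently, matrix Riccati) comparison along the unique unit-speed minimizing geodesic $\gamma:[0,r]\to N$ from $p$ to $x=\gamma(r)$. First, since $\vert\nabla^N r\vert^2\equiv 1$ on $N-\{p\}$, differentiating gives $\Hess^N(r)(\nabla^N r,\cdot)\equiv 0$, and the same identity holds in the model. Writing $X=a\,\nabla^N r(x)+X^\perp$ with $a=\langle\nabla^N r(x),X\rangle_N$ and $X^\perp\perp\nabla^N r(x)$, and decomposing $Y$ analogously, the matching hypothesis forces $\vert X^\perp\vert^2=\vert Y^\perp\vert^2=1-a^2$, and the whole statement is reduced to the comparison
\[
\Hess^N(r)(X^\perp,X^\perp)\,\leq\,\eta_w(r)\,\vert X^\perp\vert^2.
\]

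Next I would identify the restriction of $\Hess^N(r)$ to $(\nabla^N r)^\perp$ with the shape operator $S(t)$ of the geodesic sphere $\partial B_t(p)\subset N$ at $\gamma(t)$, whose outward unit normal is $\nabla^N r$. Standard Riemannian machinery shows that $S(t)$ obeys the matrix Riccati equation
\[
S'(t)+S^2(t)+R_\gamma(t)\,=\,0,\qquad R_\gamma(t):V\mapsto R^N(V,\gamma'(t))\gamma'(t),
\]
on $(\gamma')^\perp$, with the singular asymptotic $S(t)\sim(1/t)\,\mathrm{Id}$ as $t\to 0^+$. In the model $M_w^m$ the shape operator of the geodesic sphere of radius $t$ is purely scalar, $\widetilde S(t)=\eta_w(t)\,\mathrm{Id}$ by Proposition \ref{propWarpMean}, and satisfies the same Riccati equation with $R_\gamma$ replaced by $(-w''(t)/w(t))\,\mathrm{Id}$, with the same initial singular behaviour.

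The curvature hypothesis $K_{p,N}(\sigma_x)\geq -w''(r)/w(r)$ translates precisely into the operator inequality $R_\gamma(t)\geq(-w''(t)/w(t))\,\mathrm{Id}$ on $(\gamma')^\perp$ for every $t\in(0,r]$. Subtracting the two Riccati equations and writing $D(t)=\widetilde S(t)-S(t)$, one obtains a linear symmetric ODE for $D$ whose driving term is the non-negative operator $R_\gamma(t)-(-w''(t)/w(t))\,\mathrm{Id}$; a standard Gronwall-type argument, using that $D(t)\to 0$ as $t\to 0^+$ because both shape operators share the singular model $(1/t)\,\mathrm{Id}$, then yields $S(t)\leq\eta_w(t)\,\mathrm{Id}$ as symmetric operators on $(\gamma')^\perp$ for all $t\in(0,r]$. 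Evaluating at $t=r$ and pairing with $X^\perp$ delivers the required inequality.

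An equivalent and perhaps more transparent route proceeds through the index form: let $J$ be the unique perpendicular Jacobi field along $\gamma$ with $J(0)=0$, $J(r)=X^\perp$, so that $\Hess^N(r)(X^\perp,X^\perp)=I^N(J,J)$. By the index lemma $I^N(J,J)\leq I^N(V,V)$ for any piecewise smooth $V$ with the same boundary data; choosing the warped comparison field $V(t)=\bigl(w(t)/w(r)\bigr)E(t)$ with $E$ parallel along $\gamma$, perpendicular to $\gamma'$, and of norm $\vert X^\perp\vert$, an integration by parts combined with the curvature bound reduces $I^N(V,V)$ to exactly $\eta_w(r)\,\vert X^\perp\vert^2$, matching the model Hessian. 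The main technical obstacle in either approach is the singular behaviour at $t=0^+$: one must work on $(\varepsilon,r]$ and pass to the limit $\varepsilon\to 0$, exploiting that both $S(t)-(1/t)\,\mathrm{Id}$ and $\widetilde S(t)-(1/t)\,\mathrm{Id}$ remain bounded with matching limits, which is what anchors the ODE comparison and closes the argument.
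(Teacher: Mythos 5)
Your argument is correct, but note that the paper offers no proof of this statement at all: Theorem \ref{thmGreW} is imported verbatim from Greene--Wu \cite{GreW} (their Theorem A), so there is nothing internal to compare against. What you have written is essentially the standard proof of that cited result, and it is sound: the reduction via $\Hess^N(r)(\nabla^N r,\cdot)=0$ and the matching condition $\langle\nabla^M r(y),Y\rangle=\langle\nabla^N r(x),X\rangle$ correctly collapses the statement to the comparison on $(\nabla^N r)^{\perp}$; the identification of $\Hess^N r$ there with the shape operator of the geodesic sphere, the Riccati comparison $S(t)\leq\eta_w(t)\,\mathrm{Id}$ under $R_\gamma\geq(-w''/w)\,\mathrm{Id}$, and the alternative index-form route with the test field $V(t)=\bigl(w(t)/w(r)\bigr)E(t)$ (whose index integrates by parts to exactly $\eta_w(r)\lvert X^{\perp}\rvert^{2}$, using $\bigl(w\,w'\bigr)'=w'^{2}+w\,w''$) are both standard and both use the pole hypothesis correctly, via absence of conjugate points for the index lemma. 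Two small points of care: first, your claim that $S(t)-(1/t)\,\mathrm{Id}$ and $\widetilde S(t)-(1/t)\,\mathrm{Id}$ have \emph{matching} limits as $t\to0^{+}$ is only literally true when $w''(0)=0$; in general $\eta_w(t)-1/t\to w''(0)/2$, while $S(t)-(1/t)\,\mathrm{Id}\to0$, but the Riccati comparison only needs $\liminf_{t\to0^{+}}\bigl(\widetilde S(t)-S(t)\bigr)\geq0$, and the curvature hypothesis near the pole forces $w''(0)\geq0$, so the argument closes anyway (and in the paper's applications $w=Q_b$, where $w''(0)=0$). Second, you rightly followed the displayed inequality (\ref{eqKbound}) with $\geq$ rather than the phrase ``bounded from above'' in the statement, which is a slip of wording in the paper: a lower radial curvature bound gives an upper Hessian bound, exactly as you prove.
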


If we let $\mu\, : \, N \mapsto \mathbb{R}\, \, $ denote a smooth function on
$N$, then the restriction $\Tilde{\mu} \, = \, \mu_{|_{P}}$ is a smooth function
on $P$ and the respective Hessian operators, $\, \operatorname{Hess}^{N}(\mu) \,
$ and $\, \operatorname{Hess}^{P}(\Tilde{\mu}) \, $,   are related as follows:

\begin{proposition}[\cite{JK}]
\begin{equation} \label{eqHess1}
\begin{aligned}
 \operatorname{Hess}^{P}(\Tilde{\mu})(X, Y)\,
  = \,
    &\operatorname{Hess}^{N}(\mu)(X, Y) \, \\
    & + \, \langle \, \nabla^{N}(\mu), \, \alpha_{x}(X,
Y)\,  \rangle
\end{aligned}
\end{equation}
for all tangent vectors $\, X , \, Y  \in T_{x}P^{m} \subset T_{x}N^{n}\, $,
where $\, \alpha_{x} \,$ is the second fundamental form of $\, P \, $ at $x$ in
$\, N \,$.
\end{proposition}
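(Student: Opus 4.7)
The plan is to exploit the Gauss and Weingarten formulas to convert derivatives taken with the ambient connection $\nabla^{N}$ into derivatives taken with the induced connection $\nabla^{P}$, modulo an error term that captures precisely the second fundamental form.

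First I would write, at each point of $P$, the orthogonal decomposition
\begin{equation*}
\nabla^{N}\mu \, = \, \nabla^{P}\widetilde{\mu} \, + \, (\nabla^{N}\mu)^{\perp} \quad ,
\end{equation*}
which holds because $\nabla^{P}\widetilde{\mu}$ is by construction the tangential projection of $\nabla^{N}\mu$ onto $TP$ (this is essentially the same decomposition used in (\ref{eq2.1}) above for the special case $\mu=r$). Then for tangent vectors $X,Y\in T_{x}P$ I would compute $\operatorname{Hess}^{N}(\mu)(X,Y)=\langle\nabla^{N}_{X}\nabla^{N}\mu,Y\rangle$ by differentiating each of the two pieces of the decomposition along $X$ and pairing with $Y$.

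The tangential piece is handled by the Gauss formula
\begin{equation*}
\nabla^{N}_{X}\nabla^{P}\widetilde{\mu} \, = \, \nabla^{P}_{X}\nabla^{P}\widetilde{\mu} \, + \, \alpha_{x}(X,\nabla^{P}\widetilde{\mu}) \quad ,
\end{equation*}
and pairing with the tangent vector $Y$ annihilates the normal summand $\alpha_{x}(X,\nabla^{P}\widetilde{\mu})$, leaving precisely $\operatorname{Hess}^{P}(\widetilde{\mu})(X,Y)$. The normal piece is handled by the Weingarten formula $\nabla^{N}_{X}\xi=-A_{\xi}X+\nabla^{\perp}_{X}\xi$ applied to $\xi=(\nabla^{N}\mu)^{\perp}$; pairing with $Y$ leaves only the shape operator contribution $-\langle A_{(\nabla^{N}\mu)^{\perp}}X,Y\rangle = -\langle\alpha_{x}(X,Y),(\nabla^{N}\mu)^{\perp}\rangle$. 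Because $\alpha_{x}(X,Y)$ itself takes values in the normal bundle of $P$, this quantity coincides with $-\langle\alpha_{x}(X,Y),\nabla^{N}\mu\rangle$. Assembling the two contributions gives
\begin{equation*}
\operatorname{Hess}^{N}(\mu)(X,Y) \, = \, \operatorname{Hess}^{P}(\widetilde{\mu})(X,Y) \, - \, \langle\,\nabla^{N}\mu\,,\,\alpha_{x}(X,Y)\,\rangle \quad ,
\end{equation*}
and a simple rearrangement produces the stated identity (\ref{eqHess1}).

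No step in this chain is a genuine obstacle: the argument is a routine calculation with two moving parts, namely the Gauss and Weingarten formulas. The only places where one has to be mildly careful are the sign conventions in the Weingarten formula and the tacit step of replacing $(\nabla^{N}\mu)^{\perp}$ by $\nabla^{N}\mu$ in the final pairing with $\alpha_{x}(X,Y)$, which is legitimate precisely because $\alpha_{x}(X,Y)$ is normal to $P$ and so is insensitive to the tangential component $\nabla^{P}\widetilde{\mu}$ of $\nabla^{N}\mu$.
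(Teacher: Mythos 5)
Your argument is correct. Note that the paper itself offers no proof of this proposition; it is quoted directly from Jorge--Koutroufiotis \cite{JK}, so there is no internal proof to compare against. Your route --- decompose $\nabla^{N}\mu=\nabla^{P}\widetilde{\mu}+(\nabla^{N}\mu)^{\perp}$ along $P$, differentiate the tangential part with the Gauss formula and the normal part with the Weingarten formula, and use $\langle A_{\xi}X,Y\rangle=\langle\alpha_{x}(X,Y),\xi\rangle$ --- is sound, and your two points of care (the sign convention in the Weingarten formula, and replacing $(\nabla^{N}\mu)^{\perp}$ by $\nabla^{N}\mu$ in the pairing with the normal-valued $\alpha_{x}(X,Y)$) are exactly the right ones. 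The more common derivation, and essentially the one in \cite{JK}, avoids the Weingarten map altogether: extend $X,Y$ to vector fields tangent to $P$, write $\operatorname{Hess}^{N}(\mu)(X,Y)=X(Y\mu)-(\nabla^{N}_{X}Y)\mu$ and $\operatorname{Hess}^{P}(\widetilde{\mu})(X,Y)=X(Y\widetilde{\mu})-(\nabla^{P}_{X}Y)\widetilde{\mu}$, observe that the second-order terms agree along $P$, and apply the Gauss formula $\nabla^{N}_{X}Y=\nabla^{P}_{X}Y+\alpha_{x}(X,Y)$ to the connection terms, which produces the correction $\langle\nabla^{N}\mu,\alpha_{x}(X,Y)\rangle$ in one step. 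The two computations are equivalent (your Weingarten step is dual to the Gauss-formula step via the symmetry relation between $A_{\xi}$ and $\alpha$); the shorter route needs only one structural equation, while yours makes explicit how the normal component of $\nabla^{N}\mu$ is what generates the second-fundamental-form term, which is a useful way to see why the correction vanishes for, e.g., totally geodesic submanifolds or when $\nabla^{N}\mu$ is tangent to $P$.
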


If we modify $\mu$ by a smooth function $f: \mathbb{R} \to \mathbb{R}$ we then
get
\begin{corollary}[\cite{JK}] \label{corHessfomu}
\begin{equation}
 \begin{aligned}
  \operatorname{Hess}^{P}(f \circ \Tilde{\mu})(X, X) \,
  = \, \, \, &f''(\mu) \langle\nabla^{N}(\mu), X \, \rangle^{2} \\
  &+ \, f'(\mu) \operatorname{Hess}^{N}(\mu)(X, X) \\
  &+ \, \langle \nabla^{N}(\mu), \, \alpha_{x}(X, \, X) \,  \rangle    \quad .
 \end{aligned}
 \end{equation}
 for all $\, X \in T_{x}P^{m} \, $ .
\end{corollary}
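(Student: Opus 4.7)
The plan is to derive this formula by applying the standard chain rule for Hessians on $P$ to the composition $f\circ\widetilde{\mu}$, and then invoking the preceding Proposition to rewrite the intrinsic Hessian of $\widetilde{\mu}$ on $P$ in terms of the ambient Hessian of $\mu$ and the second fundamental form of the immersion.

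First, I would recall the general chain rule identity for the Hessian: for any smooth $\varphi$ on a Riemannian manifold and any smooth $f:\mathbb{R}\to\mathbb{R}$, one has
\begin{equation*}
\operatorname{Hess}(f\circ\varphi)(X,X) \, = \, f''(\varphi)\,\langle\nabla\varphi,X\rangle^{2} \, + \, f'(\varphi)\,\operatorname{Hess}(\varphi)(X,X) \quad .
\end{equation*}
Applying this on $P$ to $\varphi=\widetilde{\mu}$ gives an expression involving $\operatorname{Hess}^{P}(\widetilde{\mu})(X,X)$ and $\langle\nabla^{P}\widetilde{\mu},X\rangle^{2}$.

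Second, I would use the tangency decomposition (\ref{eq2.1}) applied to $\mu$ rather than $r$: since $\nabla^{P}\widetilde{\mu}$ is by definition the tangential part of $\nabla^{N}\mu$ along $P$, and since $X\in T_{x}P$ is tangent to $P$, the normal component contributes nothing to the inner product, so
\begin{equation*}
\langle\nabla^{P}\widetilde{\mu},X\rangle \, = \, \langle\nabla^{N}\mu,X\rangle \quad .
\end{equation*}
This converts the first term on the right-hand side into the desired form involving $\nabla^{N}\mu$.

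Finally, I would substitute the previous Proposition's identity for $\operatorname{Hess}^{P}(\widetilde{\mu})(X,X)$, namely $\operatorname{Hess}^{N}(\mu)(X,X)+\langle\nabla^{N}\mu,\alpha_{x}(X,X)\rangle$, into the second term, and then distribute the multiplicative factor $f'(\widetilde{\mu})=f'(\mu)$ across the two summands. Collecting everything yields exactly the formula of the corollary. I do not anticipate any real obstacle here, since the statement is a direct formal consequence of the chain rule combined with the Proposition just above; the only care needed is to notice that $X\in T_{x}P$ lets us replace $\nabla^{P}\widetilde{\mu}$ by $\nabla^{N}\mu$ in the inner product appearing in the chain rule.
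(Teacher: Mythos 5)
Your argument is correct and is exactly the intended derivation: the paper offers no separate proof (the corollary is simply quoted from \cite{JK}), and it follows, as you say, from the chain rule for $\operatorname{Hess}^{P}$ applied to $f\circ\Tilde{\mu}$, the identity $\langle\nabla^{P}\Tilde{\mu},X\rangle=\langle\nabla^{N}\mu,X\rangle$ for $X\in T_{x}P$, and substitution of the preceding Proposition. One caveat: distributing $f'(\mu)$ over the two summands yields $f'(\mu)\,\langle\nabla^{N}\mu,\alpha_{x}(X,X)\rangle$ as the last term, so your computation does not reproduce the displayed formula \emph{exactly} as printed --- the statement in the paper omits the factor $f'(\mu)$ on the second fundamental form term, which is a typo rather than an error on your side, as one can check from the way the formula is used later (in Corollary \ref{corLapComp} the mean curvature term indeed appears multiplied by $m f'(r)$).
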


\subsection{The Laplacian Comparison Space} \label{subsecLapCompSp}

Combining these results and tracing the resulting Hessian comparison statement
in an orthonormal basis of $T_{x}P^{m}$, we obtain the following Laplacian
inequality:

\begin{corollary} \label{corLapComp}
Suppose again that the assumptions of Theorem
\ref{thmGreW} are satisfied. Then we have for
every smooth function $f(r)$ with $f'(r) \leq
0\,\,\textrm{for all}\,\,\, r$, (respectively
$f'(r) \geq 0\,\,\textrm{for all}\,\,\, r$):
\begin{equation} \label{eqLap1}
\begin{aligned}
\Delta^{P}(f \circ r) \, \geq (\leq) \, &\left(\,
f''(r) - f'(r)\eta_{w}(r) \, \right)
 \Vert \nabla^{P} r \Vert^{2} \\ &+ mf'(r) \left(\, \eta_{w}(r) +
\langle \, \nabla^{N}r, \, H_{P}  \, \rangle  \,
\right)  \quad ,
\end{aligned}
\end{equation}
where $H_{P}$ denotes the mean curvature vector
of $P$ in $N$.
\end{corollary}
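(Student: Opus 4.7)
The plan is to apply Corollary \ref{corHessfomu} pointwise with $\mu = r$, insert the Hessian bound from Theorem \ref{thmGreW}, and then take the trace over an orthonormal basis of $T_xP$. The only thing one must be careful about is the sign of $f'(r)$, since multiplying an inequality by a negative quantity reverses its direction; that is exactly why the statement splits into the two cases $f' \leq 0$ and $f' \geq 0$.

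First, fix $x \in P$ and pick an orthonormal basis $\{e_1,\dots,e_m\}$ of $T_xP$. Applied with $\mu = r$, Corollary \ref{corHessfomu} reads
\begin{equation*}
\Hess^{P}(f\circ \tilde r)(e_i,e_i) = f''(r)\langle \nabla^{N}r, e_i\rangle^{2} + f'(r)\Hess^{N}(r)(e_i,e_i) + \langle \nabla^{N}r, \alpha_x(e_i,e_i)\rangle.
\end{equation*}
By Theorem \ref{thmGreW}, $\Hess^{N}(r)(e_i,e_i) \leq \eta_w(r)\bigl(1-\langle \nabla^{N}r, e_i\rangle^{2}\bigr)$. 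If $f'(r) \leq 0$, multiplying by $f'(r)$ reverses the inequality, yielding
\begin{equation*}
\Hess^{P}(f\circ \tilde r)(e_i,e_i) \geq f''(r)\langle \nabla^{N}r, e_i\rangle^{2} + f'(r)\eta_w(r)\bigl(1-\langle \nabla^{N}r, e_i\rangle^{2}\bigr) + \langle \nabla^{N}r, \alpha_x(e_i,e_i)\rangle.
\end{equation*}
The case $f'(r) \geq 0$ is identical with the inequality reversed.

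Next I would sum over $i = 1,\dots,m$. Using the orthogonal decomposition (\ref{eq2.1}), one has $\sum_{i=1}^m \langle \nabla^{N}r, e_i\rangle^{2} = \|\nabla^{P}r\|^{2}$, so
\begin{equation*}
\sum_{i=1}^m \Bigl[f''(r)\langle \nabla^{N}r, e_i\rangle^{2} + f'(r)\eta_w(r)\bigl(1-\langle \nabla^{N}r, e_i\rangle^{2}\bigr)\Bigr] = \bigl(f''(r)-f'(r)\eta_w(r)\bigr)\|\nabla^{P}r\|^{2} + mf'(r)\eta_w(r).
\end{equation*}
Meanwhile, by the definition of the mean curvature vector, $\sum_{i=1}^m \alpha_x(e_i,e_i) = m\,H_{P}(x)$, so the second fundamental form terms trace to $m\,\langle \nabla^{N}r, H_{P}\rangle$. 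Since the left-hand side traces to the intrinsic Laplacian $\Delta^{P}(f\circ r)$, assembling the pieces produces exactly the stated inequality (\ref{eqLap1}) in the case $f'\leq 0$, and the reverse one when $f'\geq 0$.

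There is no genuine obstacle here; the only subtlety worth flagging is that the proof really needs the Hessian inequality pointwise on every unit vector of $T_xP$, not merely in the radial direction, so one must apply Theorem \ref{thmGreW} with $X = e_i \in T_xP \subset T_xN$ and the matched vector $Y \in T_yM_w^m$ chosen so that $\langle \nabla^M r(y), Y\rangle_M = \langle \nabla^N r(x), e_i\rangle_N$, which is permissible since the right-hand side of (\ref{eqHess}) depends on $Y$ only through this inner product. Everything else is bookkeeping.
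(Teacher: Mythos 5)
Your route is the paper's route: Corollary \ref{corLapComp} is obtained precisely by applying Corollary \ref{corHessfomu} with $\mu=r$, inserting the Hessian bound of Theorem \ref{thmGreW}, and tracing in an orthonormal basis of $T_{x}P$, with the split into the cases $f'\leq 0$ and $f'\geq 0$ coming from multiplying the Hessian inequality by $f'$. Your remark that taking $X=e_{i}\in T_{x}P$ is legitimate because the right-hand side of (\ref{eqHess}) depends on $Y$ only through $\langle\nabla^{M}r(y),Y\rangle$ is also the correct justification, and the identities $\sum_{i}\langle\nabla^{N}r,e_{i}\rangle^{2}=\Vert\nabla^{P}r\Vert^{2}$ and $\sum_{i}\alpha_{x}(e_{i},e_{i})=m\,H_{P}(x)$ are exactly what the trace uses.

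There is, however, one bookkeeping error to repair before the assembly ``produces exactly'' (\ref{eqLap1}). In the pointwise identity you quoted, the second fundamental form term carries no factor $f'(r)$, so your trace yields $m\,\langle\nabla^{N}r,H_{P}\rangle$, whereas (\ref{eqLap1}) contains $m\,f'(r)\,\langle\nabla^{N}r,H_{P}\rangle$. The correct chain-rule identity is
\begin{equation*}
\Hess^{P}(f\circ \tilde{r})(X,X)\;=\;f''(r)\,\langle\nabla^{N}r,X\rangle^{2}\,+\,f'(r)\Bigl(\Hess^{N}(r)(X,X)+\langle\nabla^{N}r,\alpha_{x}(X,X)\rangle\Bigr)\,,
\end{equation*}
that is, $f'$ multiplies the whole Hessian of the restricted function, $\Hess^{P}(r)(X,X)=\Hess^{N}(r)(X,X)+\langle\nabla^{N}r,\alpha_{x}(X,X)\rangle$, not only its ambient part. (The version you wrote reproduces a typo in the typeset statement of Corollary \ref{corHessfomu}; a scaling check, replacing $f$ by $cf$, shows the factor must be present.) The omission is not cosmetic: without it the traced inequality is a genuinely different statement, and in the main application, where $f'\leq 0$, the sign of the mean curvature contribution would differ from that in (\ref{eqLap1}). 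With the corrected identity your trace gives $\bigl(f''(r)-f'(r)\eta_{w}(r)\bigr)\Vert\nabla^{P}r\Vert^{2}+m f'(r)\bigl(\eta_{w}(r)+\langle\nabla^{N}r,H_{P}\rangle\bigr)$, which is exactly (\ref{eqLap1}), and the rest of your argument stands.
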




\subsection{The Isoperimetric Comparison Space}
\label{secIsopCompSpace} Given the tangency and convexity bounding
functions $g(r)$, $h(r)$ and the ambient
curvature controller function $w(r)$ we construct
a new model space $C^{\,m}_{w, g, h}\,$, which
eventually will serve as the precise comparison
space for the isoperimetric quotients of
extrinsic balls in $P$.

\begin{proposition} \label{propMonster}
For the given smooth functions $w(r)$, $g(r)$, and $h(r)$ defined above on the closed
interval $[\,0, R\,]$, we consider the auxiliary function
$\Lambda(r)$ for $r \in \,\,]\,0, R\,]\,$, which is independent of $R$ and defined via the following equation.
(We without lack of generality that $R > 1$ which, if needed, can be obtained by scaling the metric of $N$ and thence $P$  by a constant.)
\begin{equation} \label{eqLambdaNew}
\Lambda(r)w(r)g(r) \, = \,
T\,\exp\left(-\int_{r}^{\,1}\,\frac{m}{g^{2}(t)}\,\left(\eta_{w}(t)
- h(t)\right)\, dt\right),
\end{equation}
where $T$ is a positive constant - to be fixed shortly.
Then there exists a unique smooth extension of $\Lambda(r)$ to the closed interval
$[\,0, R\,]$ with $\Lambda(0) \, = \, 0$. Moreover, the constant $T$ can be chosen so that
\begin{equation} \label{eqTR}
\frac{d}{dr}_{|_{r=0}}\left(\Lambda^{\frac{1}{m-1}}(r)\right)
= 1 \quad .
\end{equation}
\end{proposition}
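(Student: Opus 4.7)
The strategy is to extract the logarithmic singularity of $\eta_w$ at the origin from the integrand in (\ref{eqLambdaNew}), so that the exponential collapses into a pure power of $r$ of exactly the right order to produce, after dividing by $w(r)g(r)\sim r$, a function vanishing at the origin like $r^{m-1}$.

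Since $w\in C^\infty([0,R])$ with $w(0)=0$ and $w'(0)=1$, I first factor $w(r)=r\,u(r)$ with $u$ smooth, strictly positive, and $u(0)=1$; this yields
\begin{equation*}
\eta_w(r) \, = \, \frac{w'(r)}{w(r)} \, = \, \frac{1}{r} + \phi(r),
\end{equation*}
with $\phi=u'/u$ smooth on $[0,R]$. The natural normalization $g(0)=1$, coming from the fact that the tangency $\mathcal{T}(x)=\Vert\nabla^P r(x)\Vert$ tends to $1$ as $x\to p$ (the ambient radial unit vector becomes tangent to $P$ along $P$ near $p$), ensures that $(1/g^2(t)-1)/t$ has a smooth extension across $t=0$. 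Collecting terms, the integrand splits as
\begin{equation*}
\frac{m}{g^2(t)}\bigl(\eta_w(t)-h(t)\bigr) \, = \, \frac{m}{t} + \rho(t),
\end{equation*}
where
\begin{equation*}
\rho(t) \, = \, \frac{m}{t}\Bigl(\tfrac{1}{g^2(t)}-1\Bigr)+\frac{m}{g^2(t)}\bigl(\phi(t)-h(t)\bigr)
\end{equation*}
is smooth on $[0,R]$.

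Integrating from $r$ to $1$ then gives, for $r\in(0,R]$,
\begin{equation*}
\int_r^1 \frac{m}{g^2(t)}\bigl(\eta_w(t)-h(t)\bigr)\,dt \, = \, -m\ln r + G(r),
\end{equation*}
where $G(r)=\int_r^1\rho(t)\,dt$ admits a smooth extension to the whole $[0,R]$. Plugging this into (\ref{eqLambdaNew}) and dividing by $w(r)g(r)=r\,u(r)\,g(r)$ yields
\begin{equation*}
\Lambda(r) \, = \, \frac{T\,e^{-G(r)}}{u(r)\,g(r)}\,r^{m-1}.
\end{equation*}
The prefactor is smooth and strictly positive on $[0,R]$, so $\Lambda$ admits a unique smooth extension to $[0,R]$ with $\Lambda(0)=0$.

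Finally, taking the $(m-1)$-th root,
\begin{equation*}
\Lambda^{1/(m-1)}(r) \, = \, r\,\left(\frac{T\,e^{-G(r)}}{u(r)\,g(r)}\right)^{1/(m-1)},
\end{equation*}
and evaluating the derivative at $r=0$ (using $u(0)=g(0)=1$) gives $\bigl(T\,e^{-G(0)}\bigr)^{1/(m-1)}$; setting this equal to $1$ uniquely determines $T=e^{G(0)}$. The principal obstacle throughout is the singular behavior of $\eta_w$ at the origin: it is resolved by the splitting above, whose success hinges on the two sharp cancellations $\eta_w(t)-1/t \in C^\infty([0,R])$ and $g(0)=1$, which together cause the exponent of $r$ in the exponential factor to match exactly with the order of vanishing of $w(r)g(r)$.
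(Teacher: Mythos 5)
Your argument is correct and follows essentially the same route as the paper's proof: both extract the singular part $m/t$ from the integrand (using $w(0)=0$, $w'(0)=1$, $g(0)=1$, i.e.\ writing $\eta_w(t)-1/t$ and $(1/g^2(t)-1)/t$ as smooth functions), so that the exponential becomes $r^m$ times a smooth positive factor, whence $\Lambda(r)=r^{m-1}\times(\text{smooth positive})$ and the normalization of $T$ follows by evaluating the derivative of $\Lambda^{1/(m-1)}$ at $r=0$. No gaps; your choice $T=e^{G(0)}$ is the paper's $T=1/G(0)$ in its notation.
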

\begin{proof}
Concerning the definition of $\Lambda$, the only problem is with the smooth extension to $r\,=\,0$.
We first observe that the integrand of the
right hand side of (\ref{eqLambdaNew}) can be
expressed as follows - upon observing that
$w(0)=0$, $w'(0) = 1$, and $g(0)=1$:
\begin{equation} \label{eqZeta}
\frac{m}{g^{2}(t)}\,\left(\frac{w'(t)}{w(t)} -
h(t)\right)\, = \, \frac{m}{t} + \zeta(t)
\end{equation}
for a smooth, hence also bounded, function $\zeta(t)$ defined
on the closed interval $[\,0, R\,]\,$.

Indeed, since $w(t)$ is assumed smooth in $[\,0, R\,]$ it means in particular,
that the derivatives $w^{(n)}(0) \, = \, \lim_{t \to 0} w^{(n)}(t)$ are well
defined for every 'derivation' number $n$.
A repeated application of L'Hospital's rule together with the facts that  $w(0)=0$, $w'(0) = 1$ then shows,
that the function $\alpha(t) \, = \, w(t)/t$ is also a well defined
bounded smooth positive function on the closed interval $[\,0, R\,]$. In particular
$\alpha(0) \, = \, \lim_{t \to 0} (w(t)/t) \, = \, 1$. Then we have
\begin{equation}
\beta(t) \, = \, \eta_{w}(t) - \frac{1}{t} \, = \, \frac{d}{dt}\left(\log\frac{w(t)}{t}\right) \, = \, \frac{\alpha'(t)}{\alpha(t)}
\end{equation}
is a smooth bounded function on $[\,0, R\,]$.
Yet another repeated application of L'Hospital's rule
and the fact that  $g(0)=1$, shows that
the function
\begin{equation}
\gamma(t) \, = \, \frac{g^{2}(t)}{t} - \frac{1}{t}
\end{equation}
is also a smooth bounded function on $[\,0, R\,]$.

The integrand of the
right hand side of (\ref{eqLambdaNew}) can then be
expressed as follows:
\begin{equation} \label{eqZetaB}
\begin{aligned}
\frac{m}{g^{2}(t)}\,\left(\eta_{w}(t) -
h(t)\right)\, &= \, \frac{m}{g^{2}(t)}\,\left( \frac{1}{t} + \beta(t) -
h(t)\right)\\
&= \, \frac{m}{g^{2}(t)}\,\left( \frac{g^{2}(t)}{t} - \gamma(t) + \beta(t) -
h(t)\right) \\
&= \, \frac{m}{t}+\zeta(t)\quad ,
\end{aligned}
\end{equation}
where
$\zeta(t)$ is a smooth bounded function on $[\,0, R\,]$ as claimed in (\ref{eqZeta}).

Equation
(\ref{eqLambdaNew}) therefore reads for all $r
\in [\,0, R\,]\,$:
\begin{equation}
\Lambda(r)w(r)g(r)\, = \,
T\,r^{m}\exp\left(-\int_{r}^{\,1}\zeta(t)\,dt\right)
\quad.
\end{equation}
Hence applying once more the fact that $w(r) \, = \, r\alpha(r)$ we get:
\begin{equation}
\Lambda(r) \, = \, T\,r^{m-1}G(r)\quad,
\end{equation}
where $G(r)$ is a smooth, bounded, positive function.
Since $G(r)$ is smooth and positive the $(m-1)$'st root of that function is also smooth and positive
throughout $[\,0, R\,]$. In other words:
\begin{equation}
\Lambda^{\frac{1}{m-1}}(r) \, = \, r \left(T\,G(r)\right)^{\frac{1}{m-1}}\quad
\end{equation}
is a smooth, positive, bounded function on $[\,0, R\,]$ with
$\Lambda^{\frac{1}{m-1}}(0) \,=\,0$, and the choice $T\,=\, 1/G(0)$ finally gives
\begin{equation} \label{eqTRverif}
\frac{d}{dr}_{|_{r=0}}\left(\Lambda^{\frac{1}{m-1}}(r)\right)
= 1
\end{equation}
as needed.
\end{proof}

The following observation is a direct consequence of the construction of $\Lambda(r)$:

\begin{lemma} \label{lemLambdaDiffeq}
The function $\Lambda(r)$ satisfies the following differential equation:
\begin{equation} \label{eqLambdaDiffeq}
\begin{aligned}
\frac{d}{dr}\,\Lambda(r)w(r)g(r) \, &= \, \Lambda(r)w(r)g(r)\left(\frac{m}{g^{2}(r)}\left(\eta_{w}(r) - h(r) \right)\right) \\
&= \, m\,\frac{\Lambda(r)}{g(r)}\left(w'(r) - h(r)w(r) \right)
\end{aligned}
\end{equation}
\end{lemma}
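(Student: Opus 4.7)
The plan is to read off both equalities directly from the defining equation (\ref{eqLambdaNew}) of $\Lambda(r)$, so the proof reduces to a one-line differentiation of an exponential. No clever manipulations are needed; the main work was already done in Proposition \ref{propMonster} to make $\Lambda$ smooth at $r=0$, and here we only need the explicit form of the product $\Lambda(r)w(r)g(r)$ on the open interval.

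First I would rewrite (\ref{eqLambdaNew}) with the integral in increasing form, namely
\begin{equation*}
\Lambda(r)w(r)g(r) \, = \, T\,\exp\!\left(\int_{1}^{r}\frac{m}{g^{2}(t)}\bigl(\eta_{w}(t)-h(t)\bigr)\,dt\right),
\end{equation*}
which makes the dependence on $r$ transparent. Differentiating both sides with respect to $r$ and applying the fundamental theorem of calculus to the upper limit, the right-hand side picks up a factor of $\frac{m}{g^{2}(r)}\bigl(\eta_{w}(r)-h(r)\bigr)$ times the exponential itself. Since that exponential equals $\Lambda(r)w(r)g(r)$, we obtain the first equality of (\ref{eqLambdaDiffeq}).

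The second equality is purely algebraic. Using the definition $\eta_{w}(r)=w'(r)/w(r)$ from (\ref{eqWarpMean}), I cancel one factor of $g(r)$ and distribute $w(r)$ through the parenthesis:
\begin{equation*}
\Lambda(r)w(r)g(r)\cdot\frac{m}{g^{2}(r)}\bigl(\eta_{w}(r)-h(r)\bigr) \, = \, m\,\frac{\Lambda(r)}{g(r)}\bigl(w(r)\eta_{w}(r)-h(r)w(r)\bigr) \, = \, m\,\frac{\Lambda(r)}{g(r)}\bigl(w'(r)-h(r)w(r)\bigr).
\end{equation*}
This yields the stated form. There is no real obstacle; the only thing to keep in mind is that the identity holds for $r\in\,]\,0,R\,]$, where $w(r)$ and $g(r)$ are both strictly positive, so dividing by $g^{2}(r)w(r)$ in the intermediate step is legitimate. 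The smoothness of $\Lambda$ at $r=0$ established in Proposition \ref{propMonster} then guarantees that both sides of (\ref{eqLambdaDiffeq}) extend continuously to $r=0$, should one wish to invoke this identity at the endpoint.
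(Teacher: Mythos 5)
Your proposal is correct and matches the paper's intent: the paper presents this lemma as ``a direct consequence of the construction of $\Lambda(r)$,'' i.e.\ exactly the differentiation of the defining formula (\ref{eqLambdaNew}) followed by the substitution $\eta_{w}=w'/w$ that you carry out. Nothing more is needed.
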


\begin{remark} \label{remOnLambda}
In passing we also note that the function $\Lambda(r)$ defined as above
is the essential ingredient in the solution to a Dirichlet--Poisson  problem
(\ref{eqPoisson1}) to be considered below and is in this sense of instrumental
importance for obtaining our main isoperimetric inequality  in Theorem \ref{thmIsopGeneral}.
\end{remark}

A 'stretching' function $s$ is defined as follows
\begin{equation}
s(r) \, = \, \int_{0}^{r}\,\frac{1}{g(t)} \, dt \quad .
\end{equation}
It  has a well-defined inverse $r(s)$ for $s
\in [\,0, s(R)\,]$ with derivative $r'(s) \, = \, g(r(s))$.
In particular $r'(0)\, = \, g(0) \, = \, 1$.\\

With these concepts and key features of the auxiliary function $\Lambda(r)$ we are now able to
define the carrier of the isoperimetric quotients to be compared with the isoperimetric quotients
of the extrinsic balls $D_{R}$:

\begin{definition} \label{defCspace}
The {\em{isoperimetric comparison space}}
$C^{\,m}_{w, g, h}\,$ is the $W-$model space with
base interval $B\,= \, [\,0, s(R)\,]$ and warping
function $W(s)$ defined by
\begin{equation}
W(s) \, = \, \Lambda^{\frac{1}{m-1}}(r(s)) \quad .
\end{equation}
\end {definition}

We observe firstly, that in spite of its
relatively complicated construction, $C^{\,m}_{w,
g, h}\,$ is indeed a model space with a well
defined pole $p_{W}$ at $s = 0$: $W(s) \geq 0$
for all $s$ and $W(s)$ is only $0$ at $s=0$,
where also, because of the explicit construction
in Proposition \ref{propMonster} and equation (\ref{eqTR}):  $W'(0)\, = \, 1\,$.

Secondly it should not be forgotten, that the
spaces $C^{\,m}_{w, g, h}\,$ are specially tailor
made to facilitate the proofs of the
isoperimetric inequalities, that we are about to
develop in section \ref{secMainIsopRes} as well as the
explicit capacity comparison result in section
\ref{secCapAnalysis}.

In order for this to work out we need one further
particular property to be satisfied by these
comparison spaces:

Of course any given $C_{w,g,h}^{m}$ inherits all
its properties from the bounding functions $w$,
$g$, and $h$ from which it is molded in the first
place. Concerning the associated volume growth
properties we note the following expressions for
the isoperimetric quotient function:

\begin{proposition} \label{propIsopFct}
Let $B_{s}^{W}(p_{W})$ denote the metric ball of
radius $s$ centered at $p_{W}$ in
$C^{m}_{w,g,h}$. Then the corresponding
isoperimetric quotient function is
\begin{equation} \label{eqIsopFunction}
\begin{aligned}
q_{W}(s) \, &= \,
\frac{\Vol(B_{s}^{W}(p_{W}))}{\Vol(\partial
B_{s}^{W}(p_{W}))} \, \\ &= \,
\frac{\int_{0}^{s}\,W^{m-1}(t)\,dt}{W^{m-1}(s)}\,
\\ &= \,
\frac{\int_{0}^{r(s)}\,\frac{\Lambda(u)}{g(u)}\,du}{\Lambda(r(s))}
\quad .
\end{aligned}
\end{equation}
\end{proposition}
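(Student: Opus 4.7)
The plan is to verify the chain of equalities in two steps: first apply the general isoperimetric quotient formula for model spaces, then perform a change of variables using the stretching function $s(r)$ to rewrite the integral in terms of the variable $r$.

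First I would recall that $C_{w,g,h}^{m}$ is by Definition \ref{defCspace} a $W$-model space with warping function $W(s) = \Lambda^{1/(m-1)}(r(s))$ on the base interval $[0, s(R)]$, with $W(0) = 0$ and $W'(0) = 1$ (the latter having been arranged in Proposition \ref{propMonster} via the choice of the constant $T$). The general formula (\ref{eqDefq}) for the isoperimetric quotient function of any $w$-model space applies verbatim to this setting, yielding
\begin{equation*}
q_{W}(s) \, = \, \frac{\Vol(B_{s}^{W}(p_{W}))}{\Vol(\partial B_{s}^{W}(p_{W}))} \, = \, \frac{\int_{0}^{s} W^{m-1}(t)\,dt}{W^{m-1}(s)} \quad .
\end{equation*}
This establishes the first equality.

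For the second equality I would substitute the explicit form of $W$. By construction, $W^{m-1}(t) = \Lambda(r(t))$, and in particular $W^{m-1}(s) = \Lambda(r(s))$. In the numerator I would change variables from $t$ to $u = r(t)$; since $r(s)$ is the inverse of the stretching function $s(r) = \int_{0}^{r} g(\tau)^{-1}\,d\tau$, we have $r'(t) = g(r(t))$, equivalently $dt = du/g(u)$. The limits transform as $t=0 \mapsto u=0$ and $t=s \mapsto u=r(s)$, so
\begin{equation*}
\int_{0}^{s} W^{m-1}(t)\,dt \, = \, \int_{0}^{s} \Lambda(r(t))\,dt \, = \, \int_{0}^{r(s)} \frac{\Lambda(u)}{g(u)}\,du \quad ,
\end{equation*}
which combined with the denominator gives exactly the third expression in (\ref{eqIsopFunction}).

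There is no real obstacle in this proof: the only subtlety is to confirm that $W$ genuinely defines a model-space warping function so that formula (\ref{eqDefq}) applies. This is precisely what Proposition \ref{propMonster} provides, through the smooth extension of $\Lambda$ to $r=0$ with $\Lambda(0) = 0$ and the normalization $\frac{d}{dr}\big|_{r=0} \Lambda^{1/(m-1)}(r) = 1$, which together with $r'(0) = g(0) = 1$ yields $W'(0) = 1$ as required by Definition \ref{defModel}.
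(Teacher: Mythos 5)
Your proof is correct and follows exactly the route the paper intends (the paper states Proposition \ref{propIsopFct} without proof, treating it as immediate from Definition \ref{defCspace} and formula (\ref{eqDefq})): you apply the model-space quotient formula and then change variables via the stretching function, using $W^{m-1}(t)=\Lambda(r(t))$ and $dt = du/g(u)$. The appeal to Proposition \ref{propMonster} to justify that $W$ is a genuine warping function ($W(0)=0$, $W'(0)=1$) is the right supporting point and matches the paper's construction.
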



\subsection{A Balance Condition} \label{secBalanceCond}
\begin{definition} \label{defBalCond}
The model space $M_{W}^{m} \, = \, C_{w, g, h}^{m}$ is
{\em{$w-$balanced from below}} (with respect to the intermediary model space $M_{w}^{m}$) if
the following holds for all $r \in \, [\,0, R\,]$, resp. all $s \in \, [\,0, s(R)\,]$:
\begin{equation} \label{eqDefBalCond}
m\,q_{W}(s)\left(\eta_{w}(r(s)) - h(r(s)) \right) \, \geq g(r(s)) \quad .
\end{equation}
\end{definition}

\begin{remark}
In particular the $w$-balance condition implies that
\begin{equation} \label{eqEtaVSh}
\eta_{w}(r) \, - h(r) \, > \, 0 \quad .
\end{equation}

The above definition of a $w-$balance condition from below is an extension of the balance
condition from below as applied in \cite{MP4}.
The condition there reads as follows and is obtained from (\ref{eqDefBalCond}) precisely when
$g(r) \, = \, 1$ and $h(r) \, = \, 0$ for all $r
\in [\,0, R\,]\,$ so that $r(s) \, =\, s$, $W(s)\, = \, w(r)\,$:
\begin{equation}
m\,q_{w}(r)\eta_{w}(r)\, \geq 1 \quad .
\end{equation}

\end{remark}

\begin{lemma} \label{lemBalExpand}
The balance condition (\ref{eqDefBalCond}) is equivalent to the following inequality:
\begin{equation} \label{eqBalanceExpand}
m\left(\int_{0}^{r}\,\frac{\Lambda(t)}{g(t)}\,dt\right)\left(w'(r)
-  h(r)w(r) \right) \, \geq \, \Lambda(r)w(r)g(r) \quad .
\end{equation}
Moreover, the balance condition (\ref{eqDefBalCond}), hence (\ref{eqBalanceExpand}), is {\em{implied by}} the following (thence stronger) inequality, which
does not involve the tangency bounding function
$g(r)$:
\begin{equation}\label{eqBalB}
w''(r) - w'(r)h(r) - w(r)h'(r) \, \geq \, 0 \quad
.
\end{equation}
\end{lemma}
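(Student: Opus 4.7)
My plan is to handle the two assertions separately, treating the equivalence as a direct algebraic manipulation and the sufficient condition via a differentiation-and-sign argument that exploits the differential equation for $\Lambda(r)w(r)g(r)$ provided by Lemma \ref{lemLambdaDiffeq}.

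For the equivalence of (\ref{eqDefBalCond}) and (\ref{eqBalanceExpand}), I would substitute the explicit formula
\[
q_{W}(s)\,=\,\frac{\int_{0}^{r(s)}\frac{\Lambda(u)}{g(u)}\,du}{\Lambda(r(s))}
\]
from Proposition \ref{propIsopFct} and the definition $\eta_{w}(r)=w'(r)/w(r)$ from (\ref{eqWarpMean}) into the balance condition, then clear the denominator by multiplying both sides by the positive factor $\Lambda(r)w(r)$. The result is exactly (\ref{eqBalanceExpand}), and the steps are reversible.

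For the sufficient condition, the key idea is to introduce the auxiliary function
\[
F(r)\,=\,m\left(\int_{0}^{r}\frac{\Lambda(t)}{g(t)}\,dt\right)\bigl(w'(r)-h(r)w(r)\bigr)\,-\,\Lambda(r)w(r)g(r)
\]
and to show $F(r)\geq 0$ on $[0,R]$. At $r=0$ the integral vanishes and $\Lambda(0)w(0)g(0)=0$, so $F(0)=0$. Differentiating, the product rule gives three contributions, and here Lemma \ref{lemLambdaDiffeq} does the crucial work: its formula
\[
\frac{d}{dr}\bigl(\Lambda(r)w(r)g(r)\bigr)\,=\,m\,\frac{\Lambda(r)}{g(r)}\bigl(w'(r)-h(r)w(r)\bigr)
\]
cancels precisely the term coming from differentiating the integral factor, leaving
\[
F'(r)\,=\,m\left(\int_{0}^{r}\frac{\Lambda(t)}{g(t)}\,dt\right)\bigl(w''(r)-w'(r)h(r)-w(r)h'(r)\bigr).
\]
Since $\Lambda$ and $g$ are positive on $(0,R]$, assuming (\ref{eqBalB}) gives $F'(r)\geq 0$, and combining with $F(0)=0$ yields $F(r)\geq 0$ throughout, which is (\ref{eqBalanceExpand}).

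The main obstacle, and the only nontrivial input, is noticing this cancellation; everything else is bookkeeping. A secondary point to verify is that $F$ is genuinely differentiable at $r=0$, which follows from the smoothness statements in Proposition \ref{propMonster} guaranteeing that $\Lambda(r)/g(r)$ extends smoothly and vanishes to order $m-1$ at the origin, so the integral is well-defined and $F$ is at least $C^{1}$ on $[0,R]$.
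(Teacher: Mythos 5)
Your proposal is correct and follows essentially the paper's own route: the equivalence is the same clearing of denominators via $q_{W}(s(r))=\int_{0}^{r}\frac{\Lambda(u)}{g(u)}\,du\,/\,\Lambda(r)$ and $\eta_{w}=w'/w$, and your monotonicity argument for the difference function, with the cancellation supplied by Lemma \ref{lemLambdaDiffeq}, is just the paper's comparison $G'\geq F'$, $G(0)=F(0)=0$ written as a single function. No gaps worth noting.
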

\begin{proof}
A direct differentiation as in (\ref{eqIsopFunction}) but with respect to $r$ amounts to:
\begin{equation*}
\begin{aligned}
&\frac{d}{dr}\left(\frac{q_{W}(s(r))}{g(r)w(r)}\right) \, = \,
\frac{d}{dr}\left(\frac{\int_{0}^{r}\,\frac{\Lambda(u)}{g(u)}\,du}{\Lambda(r)g(r)w(r)}\right) \\
&= \, \frac{\Lambda(r)}{g^{2}(r)\Lambda(r)w(r)} - \frac{\frac{d}{dr}\left(\Lambda(r)g(r)w(r)\right)\int_{0}^{r}\,\frac{\Lambda(u)}{g(u)}\,du}{\Lambda^{2}(r)g^{2}(r)w^{2}(r)} \\
&= \, \frac{1}{\Lambda(r)g(r)w(r)}\left(\frac{\Lambda(r)}{g(r)} - \left(\frac{m}{g^{2}(r)}(\eta_{w}-h(r))\right)\int_{0}^{r}\,\frac{\Lambda(u)}{g(u)}\,du \right) \\
&= \,  \frac{1}{\Lambda(r)g^{2}(r)w^{2}(r)}\left(\Lambda(r)w(r)g(r) - m\left(\int_{0}^{r}\,\frac{\Lambda(t)}{g(t)}\,dt\right)\left(w'(r)
-  h(r)w(r) \right)    \right) \quad ,
\end{aligned}
\end{equation*}
which shows that (\ref{eqDefBalCond}) and (\ref{eqBalanceExpand}) are equivalent. \\

Now let $G(r)$ denote the left hand side of the balance condition inequality (\ref{eqBalanceExpand}),
and let $F(r)$ denote the right
hand side:
\begin{equation*}
G(r) \, = \, m\,\left(\int_{0}^{r}\,\frac{\Lambda(t)}{g(t)}\,dt\right)\left(w'(r)
-  h(r)w(r) \right) \, \geq \, \Lambda(r)w(r)g(r) \, = \, F(r)
\quad .
\end{equation*}

 Then $G(0)\, = \, F(0)=0$ and, moreover
\begin{equation} \label{eqDifIneq}
\begin{aligned}
G'(r) \, &= \,
m\frac{\Lambda(r)}{g(r)}\left(w'(r) - h(r)w(r)
\right) \\ &+ m\,\left(\int_{0}^{r}\,
\frac{\Lambda(r)}{g(r)} \,dt \right)\left(w''(r)
- w'(r)h(r) - w(r)h'(r) \right)  \\  &\geq \,
 m\frac{\Lambda(r)}{g(r)}\left(w'(r)
- h(r)w(r) \right) \\ &= \, F'(r) \quad,
\end{aligned}
\end{equation}
where we have used Lemma \ref{lemLambdaDiffeq} (for
$F'(r)$) and the balance condition
(\ref{eqBalB}). It follows that $G(r) \, \geq \,
F(r)$ for all $r \in [\,0, R\,]\,$, and this proves
the Lemma.
\end{proof}


\subsection{Balance on the Edge}

\begin{lemma} \label{lemBalanceEqualty}
Equality in the balance condition
(\ref{eqDefBalCond}) is equivalent to equality in the stronger
condition (\ref{eqBalB}) and equivalent to each one of the following identities:
\begin{equation}
\eta_{w}(r) - h(r) \, = \, \frac{1}{w(r)}\quad
\end {equation}
\begin{equation} \label{eqBalEqB}
\Lambda(r)w(r)g(r) \, = \,  m\int_{0}^{r}\,\frac{\Lambda(t)}{g(t)}\,dt \quad .
\end {equation}
\begin{equation}
q_{W}(s) \, = \, \frac{1}{m}w(r(s))g(r(s)) \quad .
\end{equation}
\end{lemma}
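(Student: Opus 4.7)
The plan is to chain the four stated conditions together by translating equality through the reformulations of Lemmas \ref{lemBalExpand} and \ref{lemLambdaDiffeq}. The starting observation is purely from Lemma \ref{lemBalExpand}: since (\ref{eqDefBalCond}) is equivalent to (\ref{eqBalanceExpand}), equality in (\ref{eqDefBalCond}) is exactly the identity (\ref{eqBalEqB}). Dividing both sides of (\ref{eqBalEqB}) by $\Lambda(r)$ and invoking formula (\ref{eqIsopFunction}) for $q_{W}(s)$ yields $q_{W}(s) = \tfrac{1}{m}w(r(s))g(r(s))$; this step is clearly reversible. So the first two equivalent forms drop out with no work.

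Next I pass to the infinitesimal statement $\eta_{w}(r) - h(r) = 1/w(r)$. Both sides of (\ref{eqBalEqB}) vanish at $r = 0$ (since $w(0) = 0$), so the identity (\ref{eqBalEqB}) is equivalent, via the fundamental theorem of calculus, to equality of derivatives on $[\,0, R\,]$. Differentiating the left-hand side by Lemma \ref{lemLambdaDiffeq} and the right-hand side directly, the equality of derivatives reads
$$m\,\frac{\Lambda(r)}{g(r)}\bigl(w'(r) - h(r)w(r)\bigr) \, = \, m\,\frac{\Lambda(r)}{g(r)} \quad .$$
Since $\Lambda(r)/g(r) > 0$ for $r \in \,]\,0, R\,]$, this reduces to $w'(r) - h(r)w(r) = 1$, i.e.\ $\eta_{w}(r) - h(r) = 1/w(r)$. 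Conversely, if this identity holds, the above display is satisfied, so the two antiderivatives agree up to a constant, and they both vanish at $r=0$, so (\ref{eqBalEqB}) is recovered.

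Finally I connect to equality in the stronger condition (\ref{eqBalB}). Given $w'(r) - h(r)w(r) = 1$ throughout $[\,0, R\,]$, direct differentiation yields $w''(r) - w'(r)h(r) - w(r)h'(r) = 0$, which is equality in (\ref{eqBalB}). Conversely, if this last identity holds, then $w'(r) - h(r)w(r)$ has zero derivative and is hence constant on $[\,0, R\,]$; evaluating at $r = 0$ using $w(0) = 0$ and $w'(0) = 1$ forces the constant to equal $1$, which recovers $\eta_{w}(r) - h(r) = 1/w(r)$. This closes the circle, and the only delicate point to check is that $h$ is regular enough at $r=0$ for $h(0)w(0) = 0$ to make sense; this is immediate from the smoothness hypothesis on $h$. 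No serious obstacle appears: the entire proof is a sequence of reversible, one-step algebraic and differential manipulations built on the structural lemmas already at hand.
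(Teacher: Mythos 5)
Your proposal is correct and follows essentially the same route as the paper's (much terser) proof: both hinge on Lemma \ref{lemLambdaDiffeq}, the algebraic equivalence behind Lemma \ref{lemBalExpand}, and integrating/differentiating with the initial values $w(0)=0$, $w'(0)=1$ to fix the constant and pass between $w'(r)-h(r)w(r)=1$, equality in (\ref{eqBalB}), and (\ref{eqBalEqB}). Your version merely spells out the chain of reversible steps that the paper compresses into ``gives identity in equation (\ref{eqDifIneq}) as well and vice versa,'' so no further changes are needed.
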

\begin{proof}
From equality in (\ref{eqBalB}) we get
\begin{equation}
w'(r) \, = \, w(r)h(r)+c
\end{equation}
for some constant $c$ which must then be $c=1$
since $w'(0) = 1\,$.
Then $w'(r) - w(r)h(r) \, = \, 0$ gives identity in equation (\ref{eqDifIneq}) as well and vice versa.
\end{proof}

\begin{remark}
Special cases of equality in the balance
condition are obtained by $h(r)\, = \, C$ and
$w(r)\,= \,r$. This corresponds to the situation
considered in section \ref{secExampSurfOfRev} -
where we analyzed radially mean $0-$convex surfaces
in Euclidean $3-$space.
\end{remark}


\section{The Isoperimetric Comparison Constellation} \label{secIsopConstellation}

The intermediate observations considered above together with the
previously introduced bounds on radial curvature
and tangency now constitute the notion of a
comparison constellation (for the isoperimetric
inequality) as follows.

\begin{definition}\label{defConstellatNew}
Let $N^{n}$ denote a Riemannian manifold with a
pole $p$ and distance function $r \, = \, r(x) \,
= \, \dist_{N}(p, x)$. Let $P^{m}$ denote an
unbounded complete and closed submanifold in
$N^{n}$. Suppose $p \in P^{m}$, and suppose that the following
conditions are
satisfied for all $x \in P^{m}$ with $r(x) \in
[\,0, R\,]\,$:
\begin{enumerate}[(a)]
\item The $p$-radial sectional curvatures of $N$ are bounded from below
by the $p_{w}$-radial sectional curvatures of
the $w-$model space $M_{w}^{m}$:
$$
\mathcal{K}(\sigma_{x}) \, \geq \,
-\frac{w''(r(x))}{w(r(x))} \quad .
$$

\item The $p$-radial mean curvature of $P$ is bounded from below by
a positive smooth radial function $h(r)$:
$$
\mathcal{C}(x)  \geq h(r(x)) \quad.
$$

\item The $p$-radial tangency of $P$ is
bounded from below by a smooth radial function
$g(r)$:
$$
\mathcal{T}(x) \, \geq \, g(r(x)) \, > \, 0 \quad.
$$
\end{enumerate}
Let $C_{w,g,h}^{m}$ denote the $W$-model with the
specific warping function $W: \pi(C_{w,g,h}^{m})
\to \mathbb{R}_{+}$ which is constructed above in
Definition \ref{defCspace} via $w$, $g$, and $h$.
Then the triple $\{ N^{n}, P^{m}, C_{w,g,h}^{m}
\}$ is called an {\em{isoperimetric comparison
constellation}} on the interval $[\,0, R\,]\,$.
\end{definition}


\section{Main isoperimetric results}
\label{secMainIsopRes}

In this section we find upper bounds for the isoperimetric
quotient defined as the volume of the extrinsic
sphere divided by the volume of the extrinsic
ball, in the setting given by the comparison
constellations defined in Definition \ref{defConstellatNew}:

\begin{theorem} \label{thmIsopGeneral}
We consider an isoperimetric comparison
constellation $\{ N^{n}, P^{m}, C_{w,g,h}^{m}
\}$ on the interval $[\,0, R\,]$.
 Suppose further that the comparison space
 $\, C_{w,g,h}^{m}\,$ is $w-$balanced from
 below in the sense of Definition (\ref{defBalCond}).
Then
\begin{equation} \label{eqIsopGeneralA}
\frac{\Vol(\partial D_{R})}{\Vol(D_{R})} \leq
\frac{\Vol(\partial
B^{W}_{s(R)})}{\Vol(B^{W}_{s(R)})} \, \leq \,
\frac{m}{g(R)}\left(\eta_{w}(R) -
h(R)\right)\quad .
\end{equation}
 If the comparison space
 $C^{m}_{w,g,h}$ satisfies the balance condition with
 equality in (\ref{eqDefBalCond}) (or equivalently in (\ref{eqBalB})) for all $r \in [\,0, R\,]\,$, then
\begin{equation} \label{eqIsopGeneralB}
\frac{\Vol(\partial D_{R})}{\Vol(D_{R})} \leq
\frac{\Vol(\partial
B^{W}_{s(R)})}{\Vol(B^{W}_{s(R)})}\, = \,
\frac{m}{w(R)g(R)}\quad .
\end{equation}
\end{theorem}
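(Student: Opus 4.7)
My plan is to transplant to $D_R$ the radial mean-exit-time function of the isoperimetric comparison space $C^m_{w,g,h}$, apply the Laplacian comparison of Corollary \ref{corLapComp}, and close the estimate with the divergence theorem on $P$. Writing $\psi(r):=\int_0^r \Lambda(u)/g(u)\,du$ so that $\psi(r)/\Lambda(r) = q_W(s(r))$ by Proposition \ref{propIsopFct}, I would introduce
$$\bar\Psi(r)\;=\;\int_r^R \frac{\psi(t)}{g(t)\,\Lambda(t)}\,dt,\qquad \bar\Psi'(r)=-\frac{\psi(r)}{g(r)\Lambda(r)}\le 0,\qquad \bar\Psi(R)=0.$$
A change of variables $s=s(r)$ shows that this $\bar\Psi$ is the unique radial solution on $B^{W}_{s(R)}\subset C^m_{w,g,h}$ of the Poisson problem $\Delta^{C^m_{w,g,h}}\bar\Psi=-1$ with zero Dirichlet data on the boundary (this is the role alluded to in Remark \ref{remOnLambda}), and $u(x):=\bar\Psi(r(x))$ is the natural test function on $D_R$.

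The central computation is to rewrite, using Lemma \ref{lemLambdaDiffeq} in the equivalent logarithmic form $\Lambda'/\Lambda + \eta_w + g'/g = m(\eta_w-h)/g^{2}$, the coefficient of $\Vert\nabla^{P}r\Vert^{2}$ that appears in (\ref{eqLap1}); a direct calculation yields
$$\bar\Psi''(r)-\bar\Psi'(r)\eta_w(r)\;=\;-\frac{1}{g^{2}(r)}+\frac{m\,\psi(r)\bigl(\eta_w(r)-h(r)\bigr)}{g^{3}(r)\,\Lambda(r)}.$$
Feeding this into Corollary \ref{corLapComp} with $f=\bar\Psi$ (so $f'\le 0$, flipping the comparison to a lower bound) and using $\mathcal{C}(x)\ge h(r)$ to dominate the $\langle\nabla^{N}r,H_{P}\rangle$ term, the pointwise estimate regroups as
$$\Delta^{P}(\bar\Psi\circ r)\;\ge\;\frac{1}{g^{2}(r)}\Bigl[\bigl(K(r)-1\bigr)\Vert\nabla^{P}r\Vert^{2}\;-\;K(r)\,g^{2}(r)\Bigr],\qquad K(r):=\frac{m\,\psi(r)(\eta_w(r)-h(r))}{g(r)\Lambda(r)}.$$
The $w$-balance condition of Definition \ref{defBalCond} is exactly $K(r)\ge 1$, and the tangency hypothesis together with $\Vert\nabla^{N}r\Vert=1$ gives $g^{2}(r)\le\Vert\nabla^{P}r\Vert^{2}\le 1$; these combined facts force the bracket to be a nondecreasing affine function of $\Vert\nabla^{P}r\Vert^{2}$ whose minimum value (attained at $\Vert\nabla^{P}r\Vert^{2}=g^{2}(r)$) is exactly $-g^{2}(r)$. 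Hence $\Delta^{P}(\bar\Psi\circ r)\ge -1$ pointwise on $D_{R}$.

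Integrating this inequality over $D_{R}$ and applying the divergence theorem (using $\nabla^{P}(\bar\Psi\circ r)=\bar\Psi'(r)\nabla^{P}r$ and $\langle\nabla^{P}r,\nu\rangle=\Vert\nabla^{P}r\Vert$ on $\partial D_{R}$) produces
$$-\Vol(D_{R})\;\le\;-\frac{\psi(R)}{g(R)\Lambda(R)}\int_{\partial D_{R}}\Vert\nabla^{P}r\Vert\,d\sigma\;\le\;-\frac{\psi(R)}{\Lambda(R)}\Vol(\partial D_{R}),$$
where the second step uses the tangency bound $\Vert\nabla^{P}r\Vert\ge g(R)$ once more. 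Rearranging this gives the first inequality in (\ref{eqIsopGeneralA}) via Proposition \ref{propIsopFct}; the second inequality is the balance condition in its equivalent form (Lemma \ref{lemBalExpand}); and when the balance condition holds with equality, Lemma \ref{lemBalanceEqualty} specializes $\psi(R)/\Lambda(R)$ to $w(R)g(R)/m$, which is precisely (\ref{eqIsopGeneralB}).

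The main obstacle is the coordinated sign-tracking in the second paragraph: the Laplacian comparison is one-sided, so every sign among $\mathcal{C}\ge h$, $\bar\Psi'\le 0$, $\eta_w-h\ge 0$ (itself a consequence of balance, see (\ref{eqEtaVSh})), and $g^{2}\le\Vert\nabla^{P}r\Vert^{2}\le 1$ must line up correctly. The key qualitative observation is that the affine function in the bracket is nondecreasing in $\Vert\nabla^{P}r\Vert^{2}$ precisely when $K(r)\ge 1$, which is why the $w$-balance condition is the weakest hypothesis under which the argument closes. A minor technical point is checking smoothness of $\bar\Psi$ at $r=0$, which follows from the expansion $\Lambda(r)\sim T\,r^{m-1}$ guaranteed by Proposition \ref{propMonster}.
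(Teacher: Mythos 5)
Your proposal is correct and follows essentially the same route as the paper: your $\bar\Psi$ is exactly the paper's comparison function $\psi$ (the transplanted model mean-exit-time), your bracket computation with $K(r)\ge 1$ is the paper's Lemma \ref{lemYellow} combined with $\Vert\nabla^{P}r\Vert^{2}\ge g^{2}$, and the closing divergence-theorem step with the tangency bound on $\partial D_{R}$ is identical. The only cosmetic difference is that you build $\bar\Psi$ directly and verify the differential inequality by hand, bypassing the operator $\LL$, the Dirichlet--Poisson formulation, and the maximum-principle comparison with the exit time $E$, none of which is needed for the isoperimetric inequality itself.
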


\begin{remark}
We first note here, that Theorem \ref{thmAisop} follows readily from this result, since the $w-$balance condition
holds (with equality) in the simple situation of Theorem \ref{thmAisop}.
\end{remark}

\begin{proof}
We define a second order differential operator
$\LL$ on functions $f$ of one real variable as
follows:
\begin{equation}
\LL f(r) \, = \, f''(r)\,g^{2}(r) +
f'(r)\left((m-g^{2}(r))\,\eta_{w}(r) - m\,h(r)
\right) \quad ,
\end{equation}
and consider the smooth solution $\psi(r)$ to the
following Dirichlet--Poisson problem:
\begin{equation}   \label{eqPoisson1}
\begin{aligned}
\LL \psi(r) &= -1\,\,\,\,\text{on}\,\,\,\, [\,0, R\,]\\
\psi(R) &=0
\end{aligned}
\end{equation}
The ODE problem is equivalent to the following:
\begin{equation}
\psi''(r) +
\psi'(r)\left(-\eta_{w}(r) + \frac{m}{g^{2}(r)}\left(\eta_{w}(r) - \,h(r)\right)\right)\, = \, -\frac{1}{g^{2}(r)}\quad .
\end{equation}
The solution is constructed via the auxiliary
function  $\Lambda(r)$ from (\ref{eqLambdaNew})
as follows:
\begin{equation}
\begin{aligned}
\psi'(r) \, &= \, \Gamma(r) \, \\ &= \,
\exp(-\mathcal{P}(r))\,\int_{0}^{r}\,\exp(\mathcal{P}(t))\left(-\frac{1}{g^{2}(t)}\right)\,dt \quad,
\end{aligned}
\end{equation}
where the auxiliary function $\mathcal{P}$ is defined as follows, assuming again without lack of generality, that
$R \, > \, 1$, so that the domain of definition of all involved functions contains $[\,0, 1\,]\,$:
\begin{equation}
\begin{aligned}
\mathcal{P}(r) \, &= \, \int_{r}^{1}\, \left(\eta_{w}(t) - \frac{m}{g^{2}(t)}\left(\eta_{w}(t) - \,h(t)\right)\right)\, dt \\
&= \, \log\left(\frac{w(1)}{w(r)}\right) - \int_{r}^{\,1}\, \frac{m}{g^{2}(t)}\left(\eta_{w}(t) - \,h(t)\right)\, dt
\quad .
\end{aligned}
\end{equation}
Hence
\begin{equation}
\begin{aligned}
&\exp(\mathcal{P}(t))\left(\frac{1}{g^{2}(t)}\right) \, = \, \\ &\frac{w(1)}{w(t)\,g^{2}(t)} \exp\left(-\int_{t}^{\,1}\,\frac{m}{g^{2}(u)}\,\left(\eta_{w}(u)
- h(u)\right)\, du\right) \\
&= \,  \frac{w(1)}{w(t)\,g^{2}(t)\,T} \Lambda(t)w(t)g(t)\, = \, \left(\frac{w(1)}{T}\right)\,\left(\frac{\Lambda(t)}{g(t)}\right)  \quad.
\end{aligned}
\end{equation}
From this latter expression we then have as well
\begin{equation}
\exp(-\mathcal{P}(r)) \, = \,  \left(\frac{T}{w(1)}\right)\,\left(\frac{1}{g(r)\Lambda(r)}\right) \quad ,
\end{equation}
so that
\begin{equation}\label{eqWellDef}
\begin{aligned}
\psi'(r) \, &= \,
\frac{-1}{g(r)\,\Lambda(r)}\,\int_{0}^{r}\,\frac{\Lambda(t)}{g(t)}\,dt
\, \\ &= -
\frac{\Vol(B^{W}_{s(r)})}{g(r)\Vol(\partial
B^{W}_{s(r)})} \\ &= -
\frac{q_{W}(s(r))}{g(r)}\quad .
\end{aligned}
\end{equation}
At this instance we must observe, that it follows clearly from this latter expression for $\psi'(r)$ that
the above formal, yet standard, method of solving (\ref{eqPoisson1}) indeed does produce a smooth solution on
$[\,0, R \,]$ with $\psi'(0) \, = \, 0$.
Then we have:
\begin{equation} \label{eqMeanExitMonster}
\begin{aligned}
\psi(r) \,&= \,
\int_{r}^{R}\frac{1}{g(u)\,\Lambda(u)}\,\left(\int_{0}^{u}\,\frac{\Lambda(t)}{g(t)}\,dt\,\right)\,du
\\ &= \int_{r}^{R} \frac{q_{W}(s(u))}{g(u)} \, du
\\ & = \int_{s(r)}^{s(R)} \,q_{W}(t) \, dt
\quad .
\end{aligned}
\end{equation}

We now show that - because of the balance
condition (\ref{eqBalB}) - the function $\psi(r)$
enjoys the following inequality:
\begin{lemma} \label{lemYellow}
\begin{equation} \label{eqYellow}
\psi''(r) - \psi'(r)\,\eta_{w}(r) \, \geq \, 0 \
\quad .
\end{equation}
\end{lemma}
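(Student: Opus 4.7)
The plan is to derive the inequality by combining the Dirichlet--Poisson equation $\LL \psi = -1$ with the explicit formula for $\psi'(r)$ that has just been computed in (\ref{eqWellDef}), and finally invoking the balance condition.

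First I would rewrite the ODE $\LL \psi(r) = -1$ to isolate $\psi''(r)$:
\begin{equation*}
\psi''(r)\, = \, \frac{1}{g^{2}(r)}\Bigl(-1 - \psi'(r)\bigl((m-g^{2}(r))\eta_{w}(r) - m\,h(r)\bigr)\Bigr).
\end{equation*}
Subtracting $\psi'(r)\eta_{w}(r)$ from both sides and grouping the $\psi'(r)\eta_{w}(r)$ contributions, the $-g^{2}(r)\eta_{w}(r)$ term cancels against $g^{2}(r)\eta_{w}(r)$ coming from the multiplication by $g^{2}(r)$, leaving
\begin{equation*}
\psi''(r) - \psi'(r)\eta_{w}(r) \, = \, \frac{1}{g^{2}(r)}\Bigl(-1 - m\,\psi'(r)\bigl(\eta_{w}(r) - h(r)\bigr)\Bigr).
\end{equation*}

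Next I would substitute the explicit value $\psi'(r) \, = \, -q_{W}(s(r))/g(r)$ from (\ref{eqWellDef}). Since $\psi'(r)$ is negative and carries a prefactor $-m$, the bracketed expression becomes
\begin{equation*}
\psi''(r) - \psi'(r)\eta_{w}(r) \, = \, \frac{1}{g^{3}(r)}\Bigl(m\,q_{W}(s(r))\bigl(\eta_{w}(r) - h(r)\bigr) \, - \, g(r)\Bigr).
\end{equation*}

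At this point the conclusion is immediate: the $w$-balance condition in its form (\ref{eqDefBalCond}) from Definition \ref{defBalCond} asserts precisely that
\begin{equation*}
m\,q_{W}(s(r))\bigl(\eta_{w}(r) - h(r)\bigr) \, \geq \, g(r),
\end{equation*}
so the right-hand side above is non-negative, proving (\ref{eqYellow}).

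There is no real obstacle in this argument; the only thing to be careful about is sign bookkeeping in the substitution of $\psi'$, and making sure the algebraic identity $(m - g^{2})\eta_{w} - m\,h + g^{2}\eta_{w} = m(\eta_{w}-h)$ is applied correctly so that the balance inequality in the exact form proved in Lemma \ref{lemBalExpand} appears naturally. The whole content of the lemma is thus encoded in the tailor-made definition of the comparison space $C_{w,g,h}^{m}$ together with the balance assumption.
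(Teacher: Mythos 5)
Your proof is correct and follows essentially the same route as the paper: rewrite the Poisson ODE to get $\psi''-\psi'\eta_{w}=-\tfrac{1}{g^{2}}\bigl(1+m\psi'(\eta_{w}-h)\bigr)$, substitute the explicit expression for $\psi'$ from (\ref{eqWellDef}), and conclude from the $w$-balance condition. The only cosmetic difference is that you invoke (\ref{eqDefBalCond}) directly in its $q_{W}$ form, while the paper passes to the equivalent expanded inequality (\ref{eqBalanceExpand}) via Lemma \ref{lemBalExpand}; the two are the same computation.
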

\begin{proof}[Proof of Lemma]
We must show that
\begin{equation} \label{eqYellowGam}
\Gamma'(r) - \Gamma(r)\eta_{w}(r) \, \geq \, 0
\quad .
\end{equation}
Since
\begin{equation}
\begin{aligned}
\Gamma'(r) \, &= \, \psi''(r) \\&= \,
-\frac{1}{g^{2}(r)}\left(1 +
\psi'(r)\left((m-g^{2}(r))\,\eta_{w}(r) -
m \, h(r) \right) \right) \\
&= \, -\frac{1}{g^{2}(r)}\left(1 +
\Gamma(r)\left((m-g^{2}(r))\,\eta_{w}(r) -
m\,h(r) \right)\right)
\end{aligned}
\end{equation}
we have
\begin{equation}
\Gamma'(r) - \Gamma(r)\eta_{w}(r) \, = \,
-\frac{1}{g^{2}(r)}\left(1 +
m\Gamma(r)\left(\eta_{w}(r) -  h(r) \right)
\right) \quad .
\end{equation}
Therefore equation (\ref{eqYellowGam}) is
equivalent to each one of the following
inequalities, observing that $\Gamma(r) < 0$ for
all $r \in \, ]0, R\,]$:
\begin{equation} \label{eqBal1}
m\Gamma(r)\left(\eta_{w}(r) -  h(r) \right) \,
\leq \, -1 \quad ,
\end{equation}
\begin{equation} \label{eqBal2}
\frac{m}{g(r)\,\Lambda(r)}\,\left(\int_{0}^{r}\,
\frac{\Lambda(t)}{g(t)}\,dt\right)\left(\frac{w'(r)}{w(r)}
-  h(r) \right) \, \geq \, 1 \quad ,
\end{equation}
\begin{equation} \label{eqBal3}
m\,\left(\int_{0}^{r}\,\frac{\Lambda(t)}{g(t)}\,dt\right)\left(w'(r)
-  h(r)w(r) \right) \, \geq \, \Lambda(r)w(r)g(r)
\quad ,
\end{equation}
where the latter inequality is equivalent to (\ref{eqDefBalCond}) via Lemma
\ref{lemBalExpand}.
\end{proof}

With Lemma \ref{lemYellow} in hand we continue the proof of Theorem \ref{thmIsopGeneral}.
Applying the Laplace inequality (\ref{eqLap1})
for the function $\psi(r)$ transplanted into
$P^{m}$ in $N^{n}$ now gives the following
comparison
\begin{equation} \label{eqMeanExit}
\begin{aligned}
\Delta^{P}\psi(r(x)) \, &\geq \,
\left(\psi''(r(x))- \psi'(r(x))\eta_{w}(r(x))\right)\,g^{2}(r(x)) \\
& \phantom{mm}+ m\psi'(r(x))\left( \eta_{w}(r(x))
- h(r(x))\right)\, \\ &= \, \LL \psi(r(x)) \, \\
&= \, -1 \, \\ &= \, \Delta^{P}E(x) \quad ,
\end{aligned}
\end{equation}
where $E(x)$ is the mean exit time function for
the extrinsic ball $D_{R}\,$, with
$E_{|_{\partial D_{R}}}\,=\,0$. The maximum
principle then applies and gives:
\begin{equation} \label{eqMeanExitComp}
E(x) \, \geq \, \psi(r(x))\,\, , \,\, \textrm{for
all} \, \, x \in D_{R} \quad .
\end{equation}

Applying the divergence theorem, taking the unit
normal
    to $\partial D_R$ as $\frac{\nabla^{P} r}{\Vert \nabla^{P} r\Vert}$,
we get
    \begin{equation}\label{eqVolFrac}
    \begin{aligned} \Vol(D_{R}) &= \int_{D_{R}} -\Delta^{P} u(x)\, d\mu \\ &\geq
    \int_{D_{R}} -\Delta^{P} \psi(r(x))\, d\mu \\ &= -\int_{D_{R}} \Div(\nabla^{P} \psi(r(x)))\, d\mu \\ &=
    -\int_{\partial D_{R}}<\nabla^{P} \psi(r(x)), \frac{\nabla^{P} r(x)}
    {\Vert\nabla^{P} r\Vert}>\,d\nu \\ &=
    -\Gamma(R)\int_{\partial D_{R}}\Vert\nabla^P r\Vert\, d\nu \\ & \geq
    -\Gamma(R)\,g(R)\Vol(\partial D_{R}) \quad ,
    \end{aligned}
    \end{equation}
    which shows the isoperimetric inequality (\ref{eqIsopGeneralA}).\\

We finally consider the case of equality in the balance condition, which implies in particular that
(\ref{eqBalEqB}) holds true. Using this we have immediately, as claimed:
\begin{equation}
\frac{\Vol(\partial
B^{W}_{s(R)})}{\Vol(B^{W}_{s(R)})}\, = \,\frac{\Lambda(R)}{\int_{0}^{R}\frac{\Lambda(t)}{g(t)}\, dt}\, = \,
\frac{m}{g(R)\,w(R)}\quad .
\end{equation}
\end{proof}

\section{Consequences} \label{secConseq}
We first observe the following volume comparison in consequence
of Theorem \ref{thmIsopGeneral}, inequality (\ref{eqIsopGeneralA}):
\begin{corollary} \label{corVolBound}
Let $\{ N^{n}, P^{m}, C_{w,g,h}^{m} \}$ be a
comparison constellation on the interval $[\,0,
R\,]\,$, as in Theorem \ref{thmIsopGeneral}. Then
\begin{equation}
\Vol(D_{r}) \, \leq \, \Vol(B_{s(r)}^{W})
\,\,\,\,\,\,\textrm{for every} \,\,\, r \in [\,0,
R\,] \,\, .
\end{equation}
\end{corollary}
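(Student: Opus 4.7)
The plan is to promote the infinitesimal isoperimetric inequality of Theorem~\ref{thmIsopGeneral} to a pointwise volume comparison by a standard ODE/monotonicity argument applied to the ratio $v(r)/V(r)$, where I write $v(r)=\Vol(D_r)$ and $V(r)=\Vol(B^W_{s(r)})$. The goal is to show that $v/V$ is non-increasing on $(0,R]$ and tends to $1$ as $r\to 0^+$.

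First I would observe that the defining hypotheses of an isoperimetric comparison constellation on $[0,R]$ restrict automatically to each sub-interval $[0,r]$, so that Theorem~\ref{thmIsopGeneral} applies at every radius $r\in(0,R]$ and gives
\begin{equation*}
\frac{\Vol(\partial D_r)}{v(r)}\;\le\;\frac{\Vol(\partial B^W_{s(r)})}{V(r)}.
\end{equation*}
Next, the coarea formula on $P$ applied to the restricted extrinsic distance, together with the tangency bound $\Vert\nabla^P r\Vert\ge g(r)$ on $\partial D_r$, yields
\begin{equation*}
v'(r)\;=\;\int_{\partial D_r}\frac{d\sigma}{\Vert\nabla^P r\Vert}\;\le\;\frac{\Vol(\partial D_r)}{g(r)}.
\end{equation*}
On the model side, the change of variable $t=s(u)$ in the defining integral produces $V(r)=V_0\int_0^r \Lambda(u)/g(u)\,du$, hence $V'(r)=V_0\,\Lambda(r)/g(r)=\Vol(\partial B^W_{s(r)})/g(r)$. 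Combining the two pieces I obtain
\begin{equation*}
\frac{v'(r)}{v(r)}\;\le\;\frac{1}{g(r)}\cdot\frac{\Vol(\partial B^W_{s(r)})}{V(r)}\;=\;\frac{V'(r)}{V(r)},
\end{equation*}
so that $(\log(v/V))'\le 0$ on $(0,R]$ and $v/V$ is non-increasing.

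Finally, I must verify the initial condition $\lim_{r\to 0^+}v(r)/V(r)=1$. For the model side, the factorisation $\Lambda(r)=T\,r^{m-1}G(r)$ established in Proposition~\ref{propMonster}, together with $G(0)=1/T$ and $g(0)=1$, gives the expansion $V(r)=V_0 r^m/m+o(r^m)$. For the submanifold side, smoothness of $P$ at the point $p$ implies that the extrinsic and intrinsic distance functions from $p$ agree to leading order, so $D_r$ and the intrinsic $m$-ball of radius $r$ in $P$ have the same leading-order volume $V_0 r^m/m$. Hence $v/V\to 1$ as $r\to 0^+$, and monotonicity then forces $v(r)\le V(r)$ for every $r\in[0,R]$, proving the corollary.

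The only step of substance is the combined coarea/isoperimetric estimate; the remaining delicate point is the initial-value analysis at $r=0$, which rests on both the smoothness of $P$ at $p$ and the explicit local form of $\Lambda$ furnished by Proposition~\ref{propMonster}. Everything else is routine bookkeeping.
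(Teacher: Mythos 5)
Your proposal is correct and follows essentially the same route as the paper: there, too, one shows that $\log\left(\Vol(D_r)/\Vol(B^{W}_{s(r)})\right)$ is non-increasing by combining the isoperimetric bound (\ref{eqIsopGeneralA}) applied at each radius with the coarea estimate $g(r)\,\tfrac{d}{dr}\Vol(D_r)\le\Vol(\partial D_r)$ (Proposition \ref{propCoArea}) and the model-side identity $\tfrac{d}{dr}\Vol(B^{W}_{s(r)})=V_0\Lambda(r)/g(r)$, together with the limit value $1$ of the quotient at $r=0$. The only cosmetic difference is that you obtain the coarea inequality directly from the level sets of the extrinsic distance $r$ (legitimate since $\Vert\nabla^P r\Vert\ge g>0$), whereas the paper derives it via the transplanted exit-time function $\psi(r(x))$; the resulting inequality and the rest of the argument coincide.
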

\begin{proof}
Let $\mathcal{G}(r)$ denote the following function
\begin{equation}
\mathcal{G}(r)\, = \, \log
\left(\frac{\Vol(D_{r})}{\Vol(B_{s(r)}^{W})}
\right) \quad .
\end{equation}
Since $W(s)$ is a warping function for an
$m-$dimensional model space we  have $\mathcal{G}(0)\, = \,
\lim_{r \to 0} \mathcal{G}(r) \, = \, 0$ and furthermore
\begin{equation}
\mathcal{G}'(r) \, = \,
\left(\frac{\frac{\partial}{\partial
r}\Vol(D_{r})}{\Vol(D_{r})}\right) -
\left(\frac{\frac{\partial}{\partial
s}\Vol(B^{W}_{s})}{g(r)\Vol(B^{W}_{s(r)})}\right)
\quad .
\end{equation}
From the co-area formula (see below) we have:
\begin{equation}
g(r)\frac{\partial}{\partial r}\Vol(D_{r}) \,
\leq \, \Vol(\partial D_{r}) \quad ,
\end{equation}
so that the isoperimetric inequality
(\ref{eqIsopGeneralA}) gives:
\begin{equation}
\mathcal{G}'(r) \, \leq \,
\frac{1}{g(r)}\left(\frac{\Vol(\partial
D_{r})}{\Vol(D_{r})} - \frac{\Vol(\partial
B^{W}_{s(r)})}{\Vol(B^{W}_{s(r)})}\right) \, \leq
\, 0 \quad .
\end{equation}
In consequence we therefore have $\mathcal{G}(r) \, \leq \,
\mathcal{G}(0)\, = \, 0$, or equivalently:
\begin{equation}
\Vol(D_{r}) \, \leq \, \Vol(B_{s(r)}^{W})
\,\,\,\,\,\,\textrm{for every} \,\,\, r \in [\,0,
R\,] \quad .
\end{equation}
\end{proof}


\begin{proposition} \label{propCoArea}
Let $\{ N^{n}, P^{m}, C_{w,g,h}^{m} \}$ be a
comparison constellation on the interval $[\,0,
R\,]\,$, as in Theorem \ref{thmIsopGeneral}. Then
we have:
\begin{equation}
g(r)\frac{\partial}{\partial r}\Vol(D_{r}) \,
\leq \, \Vol(\partial D_{r}) \quad {\textrm{for
all}}\quad  r \in [\,0, R\,]\,\, .
\end{equation}
\end{proposition}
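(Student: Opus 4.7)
The plan is to derive this inequality directly from the co-area formula applied to the restricted distance function $r \colon P \to \mathbb{R}_{+}$, combined with the lower tangency bound from Definition \ref{defRadTan}.

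First I would recall that, since $r$ restricted to $P$ is smooth on $P\setminus\{p\}$ and its gradient $\nabla^{P}r$ has full rank away from $p$ (in fact $\|\nabla^{P}r(x)\| = \mathcal{T}(x) \geq g(r(x)) > 0$ by hypothesis), the extrinsic sphere $\partial D_{t}$ is a regular level set of $r$ for every $t\in(0,R]$. The co-area formula then gives
\begin{equation*}
\Vol(D_{r}) \, = \, \int_{0}^{r} \left(\int_{\partial D_{t}} \frac{1}{\|\nabla^{P}r\|}\,d\sigma_{t}\right)\, dt \quad ,
\end{equation*}
so that, differentiating with respect to the upper limit, one obtains
\begin{equation*}
\frac{\partial}{\partial r}\Vol(D_{r}) \, = \, \int_{\partial D_{r}} \frac{1}{\|\nabla^{P}r\|}\,d\sigma_{r} \quad .
\end{equation*}

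Next I would invoke the radial tangency assumption (c) from Definition \ref{defConstellatNew}: for every $x\in\partial D_{r}$ we have $\|\nabla^{P}r(x)\| \geq g(r) > 0$, and hence $1/\|\nabla^{P}r(x)\| \leq 1/g(r)$ pointwise on $\partial D_{r}$. Inserting this into the identity above yields
\begin{equation*}
\frac{\partial}{\partial r}\Vol(D_{r}) \, \leq \, \frac{1}{g(r)}\int_{\partial D_{r}} d\sigma_{r} \, = \, \frac{\Vol(\partial D_{r})}{g(r)} \quad ,
\end{equation*}
and multiplying through by $g(r)>0$ gives the asserted inequality. There is no real obstacle: the only point requiring a little care is the behaviour at $r=0$, which is handled by noting that the constellation assumption places $p\in P^{m}$ with $g(0)=1$, so both sides vanish in the limit and the inequality extends continuously across $r=0$.
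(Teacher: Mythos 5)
Your argument is correct, and it rests on exactly the same two ingredients as the paper's proof: the co-area formula and the pointwise tangency bound $\Vert\nabla^{P}r\Vert\geq g(r(x))>0$, the latter also guaranteeing that the levels $\partial D_{t}$ are regular hypersurfaces so the formula applies. The difference is one of execution: you apply the co-area formula directly to the extrinsic distance $r\vert_{P}$ and differentiate in the radius, whereas the paper applies it to the transplanted comparison function $\Psi(x)=\psi(r(x))$ (following Chavel's notation), uses $\Vert\nabla^{P}\Psi\Vert=-\Gamma(r)\Vert\nabla^{P}r\Vert$, and then changes variables back through $\psi^{-1}$, with the factors $\Gamma(r)$ cancelling at the end. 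Your route is the more economical one, since it never invokes the auxiliary solution $\psi$ of the Dirichlet--Poisson problem, its derivative $\Gamma$, or its monotonicity; it also makes transparent that the proposition is independent of the balance condition and of the comparison space altogether, needing only hypothesis (c) of the constellation. The paper's detour buys nothing logically here beyond fitting the bookkeeping already set up in the proof of Theorem \ref{thmIsopGeneral}; your remark about the endpoint $r=0$ (where $g(0)=1$ and both sides vanish in the limit) is the right way to handle the only point where $r\vert_{P}$ fails to be smooth.
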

\begin{proof}
Let $\Psi(x)= \psi(r(x))$ denote the radial mean
exit time function, transplanted into $D_{R}$.

With the notation of \cite{Cha1} we then have
$$\begin{aligned}
\Omega(t) &=\{ x\in P\,|\, \psi(r(x)) > t\}= D_{\psi^{-1}(t) }\\
V(t) &= \Vol(D_{\psi^{-1}(t)})\\
\textrm{and}\,\,\,\,\Sigma(t)&=\partial
D_{\psi^{-1}(t)}\,\,\, \textrm{for all}\,\,\,  t
\in  ]\,0, \psi(0)\,[ \quad .
\end{aligned}
$$

The co-area formula states that
\begin{equation}V'(t)=- \int_{\partial D_{\psi^{-1}(t)}}
\Vert \nabla^P \Psi(x) \,\Vert^{-1} d\sigma_t
 \quad .
\end{equation}

On the other hand, we know that on $D_{R}$ we
have for all $r$:
\begin{equation}
\begin{aligned}
\Vert \nabla^P \Psi \,\Vert \, = \,
-\Gamma(r)\,\Vert \nabla^{P}r\Vert \, \geq
-g(r)\,\Gamma(r) \quad ,
\end{aligned}
\end{equation}
so we then have:
\begin{equation}
-\Vert\nabla^P \Psi \,\Vert ^{-1} \, \geq \,
\frac{1}{g(r)\,\Gamma(r)} \quad .
\end{equation}
Therefore
\begin{equation} \label{eqVP}
\begin{aligned}
V'(t) \, &\geq \,   \int_{\partial
D_{\psi^{-1}(t)}}\,\frac{1}{g(r)\,\Gamma(r)}
d\sigma_t \, \\ &= \, \frac{1}{g(r)\,\Gamma(r)}
\Vol(\partial D_{\psi^{-1}(t)}) \quad .
\end{aligned}
\end{equation}

We define $F(r)= \Vol(D_r)$ and have
\begin{equation}
V(t)=\Vol(D_{\psi^{-1}(t)})= F\circ \psi^{-1}(t)
\quad , \end{equation}
 then
\begin{equation}
\begin{aligned}
V'(t) \, &= \,  F'(\psi^{-1}(t))\,
\frac{d}{dt}\psi^{-1}(t) \, \\ & = \,
\frac{\frac{d}{dr}\Vol(D_{r})}{\frac{d}{dr}\psi(r)}
\\ & = \,
\frac{\frac{d}{dr}\Vol(D_{r})}{\Gamma(r)} \quad.
\end{aligned}
\end{equation}
Since we also know from equation (\ref{eqVP})
that
\begin{equation}
 V'(t) \,\geq \, \frac{\Vol(\partial
D_{r})}{g(r)\,\Gamma(r)} \quad ,
\end{equation}
and since $\Gamma(r) < 0$ on $]\, 0, R[\,$, we
finally get, as claimed:
\begin{equation}
g(r)\frac{d}{dr} \Vol(D_r) \, \leq \,
\Vol(\partial D_{r}) \quad \textrm{for all}
\,\,\, r \in [\,0,R\,] \quad .
\end{equation}

\end{proof}

\begin{corollary} \label{corGradientIntBound}
Let $\{ N^{n}, P^{m}, C_{w,g,h}^{m} \}$ be a
comparison constellation on $[\,0, R\,]\,$, as in
Theorem \ref{thmIsopGeneral}. Then
\begin{equation}
\int_{\partial D_{r}}\,\Vert \nabla^{P}r \Vert
\,d\nu \, \leq \, g(r)\,W^{m-1}(s(r))
\,\,\,\textrm{for every} \,\,\, r \in [\,0, R\,]
\quad.
\end{equation}
\end{corollary}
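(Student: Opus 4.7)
The plan is to rerun the divergence-theorem computation used in the proof of Theorem \ref{thmIsopGeneral} (the chain (\ref{eqVolFrac})), but applied to the generic extrinsic ball $D_{r}$ for $r \in [0, R]$ rather than only to the outer ball $D_{R}$. Let $\psi$ be the radial solution of the Dirichlet--Poisson problem (\ref{eqPoisson1}) constructed in that proof, and recall from (\ref{eqMeanExit}) the pointwise Laplace comparison $\Delta^{P}(\psi \circ r)(x) \geq -1$, which holds throughout $D_{R}$.

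First I would integrate this inequality over $D_{r}$ and apply the divergence theorem to the vector field $\nabla^{P}(\psi \circ r) = \Gamma(r)\,\nabla^{P} r$, using the fact that at $\partial D_{r}$ the outward unit normal $\nu$ equals $\nabla^{P} r/\Vert\nabla^{P} r\Vert$, so that $\langle \nabla^{P} r, \nu \rangle = \Vert \nabla^{P} r\Vert$. This yields
\begin{equation*}
\Vol(D_{r}) \, \geq \, \int_{D_{r}} -\Delta^{P}(\psi \circ r)\, d\mu \, = \, -\Gamma(r)\int_{\partial D_{r}} \Vert \nabla^{P} r \Vert \, d\nu \quad ,
\end{equation*}
which, after invoking (\ref{eqWellDef}) in the form $-\Gamma(r) = q_{W}(s(r))/g(r) > 0$, rearranges to
\begin{equation*}
\int_{\partial D_{r}} \Vert \nabla^{P} r \Vert \, d\nu \, \leq \, \frac{g(r)}{q_{W}(s(r))}\,\Vol(D_{r}) \quad .
\end{equation*}

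The second step is to feed Corollary \ref{corVolBound}, $\Vol(D_{r}) \leq \Vol(B^{W}_{s(r)})$, into this inequality and then observe that by Proposition \ref{propIsopFct} the quotient $\Vol(B^{W}_{s(r)})/q_{W}(s(r))$ is precisely $\Vol(\partial B^{W}_{s(r)})$, which in the paper's notation is (up to the common normalizing constant $V_{0}$) just $W^{m-1}(s(r))$. Combining these two observations delivers the claimed bound $\int_{\partial D_{r}} \Vert \nabla^{P} r \Vert \, d\nu \leq g(r)\,W^{m-1}(s(r))$ for every $r \in [0, R]$.

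There is essentially no substantive obstacle here beyond careful bookkeeping: both the Laplace comparison and the divergence-theorem computation were already carried out in the proof of Theorem \ref{thmIsopGeneral}, while the volume comparison comes from the immediately preceding Corollary \ref{corVolBound}. The $w$-balance condition on $C^{m}_{w, g, h}$ enters only indirectly, through its role (via Lemma \ref{lemYellow}) in securing the Laplace comparison $\Delta^{P}\psi(r) \geq -1$ which underlies the whole estimate.
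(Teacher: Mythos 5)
Your proposal is correct and follows essentially the same route as the paper: the paper likewise extracts from the divergence-theorem chain (\ref{eqVolFrac}), applied at the generic radius $\rho$, the bound $\int_{\partial D_{\rho}}\Vert\nabla^{P}r\Vert\,d\nu \leq \Vol(D_{\rho})/(-\Gamma(\rho))$, then inserts Corollary \ref{corVolBound} and the identification $-\Gamma(\rho) = q_{W}(s(\rho))/g(\rho)$ from (\ref{eqWellDef}) to conclude $\int_{\partial D_{\rho}}\Vert\nabla^{P}r\Vert\,d\nu \leq g(\rho)\Lambda(\rho) = g(\rho)W^{m-1}(s(\rho))$. Your explicit rerun of the divergence theorem over $D_{r}$ (valid since $\Delta^{P}\psi(r(x))\geq -1$ holds on all of $D_{R}$) and your remark on the normalizing constant $V_{0}$ match the paper's conventions, so there is nothing to add.
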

\begin{proof}
From the proof of the isoperimetric inequality,
Theorem \ref{thmIsopGeneral}, we extract the
following inequality from equation
(\ref{eqVolFrac})
\begin{equation}
\int_{\partial D_{\rho}}\,\Vert \nabla^{P}r \Vert
\,d\nu \, \leq \,
\frac{\Vol(D_{\rho})}{-\Gamma(\rho)} \quad.
\end{equation}
From the previous Corollary \ref{corVolBound} we
have
\begin{equation}
\Vol(D_{\rho}) \, \leq \, \Vol(B^{W}_{s(\rho)}) \quad ,
\end{equation}
so that
\begin{equation}
\int_{\partial D_{\rho}}\,\Vert \nabla^{P}r \Vert
\,d\nu \, \leq \,
\frac{\Vol(B^{W}_{s(\rho)})}{-\Gamma(\rho)}\, =
\, g(\rho)\,\Lambda(\rho) \quad.
\end{equation}
\end{proof}



When the submanifold $P^m$ is minimal and the ambient space $N^{n}$ is a Cartan-Hadamard manifold (with sectional curvatures bounded from above by $0$), we get the following two-sided isoperimetric inequality:

\begin{corollary} \label{corTwoSidedIsop}
Let $P^m$ be a minimal submanifold in $N^n$. Let $p \in P^{m}$ be a point which is a pole of $N$. Suppose that the $p$-radial sectional curvatures of $N$ are bounded from above and from below as follows:
\begin{equation}
-\frac{w_2''(r(x))}{w_2(r(x))}\, \leq \,\mathcal{K}(\sigma_{x}) \, \leq \,
-\frac{w_1''(r(x))}{w_1(r(x))} \, \leq \, 0 \quad .
\end{equation}
Then
\begin{equation*}
\frac{\Vol(\partial
B^{w_1}_{R})}{\Vol(B^{w_1}_{R})} \, \leq \, \frac{\Vol(\partial D_{R})}{\Vol(D_{R})} \,  \leq \,
\frac{\Vol(\partial
B^{W_2}_{s(R)})}{\Vol(B^{W_2}_{s(R)})} \, \leq \, \frac{m}{g(R)}\left(\eta_{w_{2}}(R) -
h(R)\right)\, \, ,
\end{equation*}
where $W_{2}(s)$ is the warping function of the comparison model space $C^{m}_{w_{2}, g, 0}$, see Theorem \ref{thmIsopGeneral}.
\end{corollary}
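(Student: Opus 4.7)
The plan is to treat the upper half and the lower half of the sandwich separately, obtaining the two rightmost inequalities as a direct application of Theorem \ref{thmIsopGeneral} and the leftmost inequality by invoking the dual ``upper curvature'' theory from \cite{MP1, Pa2}.

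First I would set up the required isoperimetric comparison constellation for the upper bound. Since $P$ is minimal, its $p$-radial mean curvature function satisfies $\mathcal{C}(x) \equiv 0$, so the radial mean convexity hypothesis holds with $h \equiv 0$. Together with the lower radial curvature bound $-w_2''/w_2$ and the tangency bound $g(r)$, this assembles the constellation $\{N^{n}, P^{m}, C^{m}_{w_{2}, g, 0}\}$ on $[\,0, R\,]$ in the sense of Definition \ref{defConstellatNew}. Before I can invoke Theorem \ref{thmIsopGeneral} I must verify the $w_{2}$-balance condition from below; using the stronger sufficient form (\ref{eqBalB}) of Lemma \ref{lemBalExpand}, this reduces with $h \equiv 0$ to the single inequality $w_{2}''(r) \geq 0$, which follows at once from the sandwich hypothesis $-w_{2}''/w_{2} \leq -w_{1}''/w_{1} \leq 0$ (dividing by the positive quantities $w_{1}, w_{2}$ yields $w_{2}''/w_{2} \geq w_{1}''/w_{1} \geq 0$). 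Theorem \ref{thmIsopGeneral} then delivers
\[
\frac{\Vol(\partial D_{R})}{\Vol(D_{R})} \, \leq \, \frac{\Vol(\partial B^{W_{2}}_{s(R)})}{\Vol(B^{W_{2}}_{s(R)})} \, \leq \, \frac{m}{g(R)}\,\eta_{w_{2}}(R),
\]
which is precisely the right half of the stated chain (with $h \equiv 0$).

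For the remaining inequality $\Vol(\partial B^{w_{1}}_{R})/\Vol(B^{w_{1}}_{R}) \leq \Vol(\partial D_{R})/\Vol(D_{R})$ I would invoke the dual isoperimetric comparison theorem for minimal submanifolds in ambient spaces with an \emph{upper} radial sectional curvature bound, established in \cite{MP1, Pa2}. Structurally that argument mirrors the proof of Theorem \ref{thmIsopGeneral} with all comparison inequalities reversed: one uses the Greene--Wu Hessian comparison in the form appropriate to an upper curvature bound, exploits minimality ($H_{P} \equiv 0$) to discard the mean curvature term in (\ref{eqHess1}), transplants a monotone increasing radial test function from the model ball $B^{w_{1}}_{R}$ to $D_{R}$, and concludes via the maximum principle and the divergence theorem in exact analogy with the chain (\ref{eqVolFrac}). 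Combining this with the previous step gives the full two-sided estimate.

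The main non-trivial step, as in Theorem \ref{thmIsopGeneral} itself, is the verification of the balance condition on the lower-curvature side, since that is what drives the whole machinery; but the main point of this corollary is that in the present Cartan--Hadamard, minimal setting that condition collapses to the trivial inequality $w_{2}'' \geq 0$. Consequently the two halves of the sandwich decouple cleanly, and no analytic work is required beyond applying Theorem \ref{thmIsopGeneral} once and citing the complementary upper-curvature theory for the other side.
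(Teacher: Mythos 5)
Your argument is essentially the paper's: minimality gives $h\equiv 0$, the right-hand inequalities follow from Theorem \ref{thmIsopGeneral} applied to the constellation $\{N^{n},P^{m},C^{m}_{w_{2},g,0}\}$ after observing that the strong balance condition (\ref{eqBalB}) collapses to $w_{2}''\geq 0$, and the left-hand inequality is imported from the dual upper-curvature-bound theory for minimal submanifolds. The only difference is the source for that lower bound: the paper invokes Theorem B of \cite{MP4} (the general $w$-model version, whose balance-from-below hypothesis, \cite[Observation 5.12\,]{MP4}, is likewise automatic here since $w_{1}''\geq 0$), whereas \cite{MP1, Pa2} concern the constant upper-bound setting, so with your citation you would either have to restrict to that case or carry out the extension to general $w_{1}$ you sketch, including its balance-type hypothesis.
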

\begin{proof}
Since  $P$ is minimal it is radially mean $0-$convex, so $h\,=\,0$.
The upper isoperimetric bound follows then directly from Theorem \ref{thmIsopGeneral}. Indeed,
$\{ N^{n}, P^{m}, C_{w_{2},g,0}^{m} \}$ is a comparison constellation
with the model comparison space $C_{w_{2},g,0}^{m}$ which is $w_{2}-$balanced from below.
This latter claim follows  because $w_{2}(r)$
satisfies the strong balance condition (\ref{eqBalB}) in view of the assumptions
$-w_{2}''(r(x))/w_{2}(r(x))\, \leq \, 0$ and $h\,=\,0$.
The lower isoperimetric bound follows directly from Theorem B in \cite{MP4}. Also for this to hold we need the corresponding
balance condition from below, which is again satisfied because of the curvature assumption:
$-w_{1}''(r(x))/w_{1}(r(x))\, \leq \, 0$, see \cite[Observation 5.12\,]{MP4}.
\end{proof}


\section{The intrinsic Viewpoint} \label{secIntrinsic}
In this section we consider the intrinsic versions of the isoperimetric
and volume comparison inequalities (\ref{thmIsopGeneral}) and (\ref{corVolBound}) assuming 
that $P^{m}\, = \, N^{n}$. In this case, the extrinsic distance to the pole $p$ becomes the
intrinsic distance in the ambient manifold, so, for all $r \, > \, 0$ the extrinsic
domains $D_r$ become the geodesic balls $B^N_r$ of $N^n$. We have for all $x \in P$:
\begin{equation}
\begin{aligned}
\nabla^P r(x)\,&=\,\nabla^N r(x) \\
H_P(x)\,&=\,0 \quad .
\end{aligned}
\end{equation}
Thus $\Vert \nabla^P r\Vert \,  =\, 1$, so $g(r(x))\, = \, 1$ and $\mathcal{C}(x)  \, = \,  h(r(x))\, =\, 0$, the {\em stretching} function becomes the identity $s(r) \, = \, r$, $W(s(r))\, = \, w(r)$, and the isoperimetric comparison space $C^m_{w,g,h}$ is reduced to the auxiliary model space $M^m_w$. Applying Proposition \ref{propMonster} we then have
\begin{corollary}\label{corIntrinIsop}
Let $N^n$ denote a complete riemannian manifold with a pole $p$. Suppose that the $p$-radial  sectional curvatures of $N^n$ are bounded from below by the $p_w$-radial sectional curvatures of a $w$-model space $M^n_w$ for all $r >0 $. Then
\begin{equation}\label{eqIntrinIsop}
\frac{\Vol(\partial B^N_{R})}{\Vol(B^N_{R})} \leq
\frac{\Vol(\partial
B^{w}_{R})}{\Vol(B^{w}_{R})} \, \leq \,
n\eta_{w}(R)\quad .
\end{equation}
\end{corollary}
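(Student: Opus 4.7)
The plan is to derive Corollary \ref{corIntrinIsop} as a direct specialization of Theorem \ref{thmIsopGeneral} to the intrinsic situation $P^{m} = N^{n}$, in which case $m = n$. The text immediately preceding the statement already collects most of the relevant reductions, so the proof really amounts to tracking what the comparison constellation becomes in this case and then quoting the main theorem.

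First I would observe that all extrinsic geometric quantities collapse to trivial values: since $\nabla^{P}r = \nabla^{N}r$ is a unit vector, the tangency function is $\mathcal{T}(x) \equiv 1$, and since $P = N$ is its own ambient space, the mean curvature vector $H_{P}$ vanishes and $\mathcal{C}(x) \equiv 0$. Hence we can take $g \equiv 1$ and $h \equiv 0$ as the bounding radial functions, and the extrinsic balls $D_{R}$ are simply the geodesic balls $B^{N}_{R}$.

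Next I would show that under these choices the isoperimetric comparison space $C^{n}_{w,1,0}$ of Definition \ref{defCspace} reduces to the original model $M^{n}_{w}$. The stretching function is $s(r) = \int_{0}^{r} 1\, dt = r$, so $r(s) = s$. From (\ref{eqLambdaNew}),
\begin{equation*}
\Lambda(r)\,w(r) \, = \, T\exp\!\left(-\int_{r}^{1}\! n\,\eta_{w}(t)\,dt\right) \, = \, T\Bigl(\tfrac{w(r)}{w(1)}\Bigr)^{\!n},
\end{equation*}
so $\Lambda(r) = c\,w(r)^{n-1}$ with $c = T/w(1)^{n}$, and $W(s) = \Lambda^{1/(n-1)}(s) = c^{1/(n-1)}w(s)$. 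The normalization $W'(0)=1$ combined with $w'(0)=1$ forces $c^{1/(n-1)} = 1$, so $W \equiv w$ and $C^{n}_{w,1,0} = M^{n}_{w}$. The curvature hypothesis in the corollary is exactly condition (a) of Definition \ref{defConstellatNew}, while (b) and (c) are trivially satisfied with $h\equiv 0$ and $g\equiv 1$. Thus $\{N^{n}, N^{n}, M^{n}_{w}\}$ is an isoperimetric comparison constellation.

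With these reductions in place, Theorem \ref{thmIsopGeneral} gives
\begin{equation*}
\frac{\Vol(\partial B^{N}_{R})}{\Vol(B^{N}_{R})} \,\leq\, \frac{\Vol(\partial B^{w}_{R})}{\Vol(B^{w}_{R})} \,\leq\, \frac{n}{g(R)}\bigl(\eta_{w}(R)-h(R)\bigr) \,=\, n\,\eta_{w}(R),
\end{equation*}
which is exactly (\ref{eqIntrinIsop}). The one point requiring care — and the main obstacle in the plan — is the $w$-balance condition (\ref{eqDefBalCond}) required by Theorem \ref{thmIsopGeneral}. With $g\equiv 1$ and $h\equiv 0$ it collapses to $n\,q_{w}(r)\,\eta_{w}(r) \geq 1$, which by Lemma \ref{lemBalExpand} is implied by $w''(r)\geq 0$, i.e.\ by the auxiliary model $M^{n}_{w}$ having non-positive radial sectional curvature. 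This is the natural setting of the paper (curvature bounded from below by a Cartan-Hadamard type model), and under this standing regularity on $w$ the application of Theorem \ref{thmIsopGeneral} goes through without further obstruction.
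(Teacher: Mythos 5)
Your reduction of the comparison space is fine: with $g\equiv 1$, $h\equiv 0$ the stretching is the identity, $\Lambda(r)=c\,w^{n-1}(r)$, and the normalization forces $W\equiv w$, so $C^{n}_{w,1,0}=M^{n}_{w}$, exactly as the paper notes before the statement. The gap is in the last step: you invoke Theorem \ref{thmIsopGeneral} as a black box, and since that theorem requires the $w$-balance condition (\ref{eqDefBalCond}), you are forced to add the hypothesis $w''\geq 0$ (equivalently, via Lemma \ref{lemBalExpand}, the balance condition $n\,q_{w}(r)\,\eta_{w}(r)\geq 1$). But Corollary \ref{corIntrinIsop} does not assume any such condition on $w$ --- the only hypothesis is the lower radial curvature bound --- so your argument proves a strictly weaker statement. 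Calling $w''\geq 0$ ``the natural setting of the paper'' does not repair this: the corollary is stated for an arbitrary $w$-model lower bound, and the whole point of the paper's proof is that intrinsically the balance condition can be dispensed with.

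The paper's route is to rerun the proof of Theorem \ref{thmIsopGeneral} rather than quote it. The balance condition enters that proof only through Lemma \ref{lemYellow}, i.e. the inequality $\psi''(r)-\psi'(r)\eta_{w}(r)\geq 0$, whose sole purpose is to pass from the factor $\Vert\nabla^{P}r\Vert^{2}$ in the Laplacian comparison (\ref{eqLap1}) down to $g^{2}(r)$. In the intrinsic case $\Vert\nabla^{N}r\Vert\equiv 1=g(r)$, so this step is an identity and (\ref{eqYellow}) is, in the paper's words, obsolete; moreover $\LL$ becomes the radial Laplacian $f''+(n-1)\eta_{w}f'$ of $M^{n}_{w}$, and the divergence-theorem argument (\ref{eqVolFrac}) then yields the comparison $\Vol(\partial B^{N}_{R})/\Vol(B^{N}_{R})\leq \Vol(\partial B^{w}_{R})/\Vol(B^{w}_{R})$ with no balance hypothesis at all. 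So to match the statement you should reproduce that argument (noting where the equality $\Vert\nabla^{N}r\Vert=1$ replaces Lemma \ref{lemYellow}) instead of citing the theorem wholesale; your balance discussion is then only relevant to the final, purely model-space inequality $\Vol(\partial B^{w}_{R})/\Vol(B^{w}_{R})\leq n\,\eta_{w}(R)$, which is indeed equivalent to $n\,q_{w}\,\eta_{w}\geq 1$.
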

\begin{proof}
The proof follows along the lines of theorem \ref{thmIsopGeneral}. Indeed, since $g(r)\,=\,1$ and $h(r)\,=\,0$, the second order differential operator $L$ agrees with the Laplacian on functions of one real variable defined on the model spaces $M^n_w$, namely,
\begin{equation}
Lf(r)=f''(r)+(n-1)\eta_w(r)f'(r)
\end{equation}
Solving the corresponding problem (\ref{eqPoisson1}) on $[0,R\,]$ under these conditions and transplanting the solution $\psi(r)$ to the geodesic ball $B^N_R$, we can apply the Laplacian comparison analysis using formally $H_P=0$ because $P=N$. Then, we obtain the inequality
\begin{equation}\label{eqMeanExitInt}
\Delta^N\psi(r(x)) \geq -1=\Delta^N E(x) \quad .
\end{equation}
As a consequence of equality $\Vert \nabla^P r\Vert =1$, inequality (\ref{eqYellow})
is obsolete for the statement of inequality (\ref{eqMeanExit}) to hold true.
Hence, we do not need the balance condition as hypothesis, and thence the
conclusion follows as in the proof of Theorem \ref{thmIsopGeneral},
applying the divergence theorem together with the inequality (\ref{eqMeanExitInt}).
\end{proof}

The well-known volume comparison is a straightforward consequence, see \cite{Sa},
and recovered as follows:

\begin{corollary}\label{corVolIneq}
Let $N^n$ denote a complete Riemannian manifold with a pole $p$.
Suppose that the $p$-radial  sectional curvatures of $N^n$ are
bounded from below by the $p_w$-radial sectional curvatures of
a $w$-model space $M^n_w$ for all $r >0$. Then
\begin{equation}
\Vol(B^N_{r}) \, \leq \, \Vol(B_{r}^{w})
\,\,\,\,\,\,\textrm{for every} \,\,\, r \in [\,0,
R\,] \,\, .
\end{equation}
\end{corollary}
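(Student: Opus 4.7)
The plan is to mimic the argument of Corollary \ref{corVolBound}, which deduced the volume bound from the isoperimetric bound in the extrinsic setting, and apply it here using Corollary \ref{corIntrinIsop} in place of Theorem \ref{thmIsopGeneral}. Since we are in the intrinsic case $P^{m}=N^{n}$, all the tangency issues disappear: we have $\Vert\nabla^{N}r\Vert=1$, so $g(r)\equiv 1$, and therefore the co-area considerations of Proposition \ref{propCoArea} reduce to the familiar identity $\frac{d}{dr}\Vol(B^{N}_{r})=\Vol(\partial B^{N}_{r})$ for a.e. $r$ (and the analogous identity for the model ball). This is where the intrinsic proof becomes cleaner than its extrinsic counterpart.

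Concretely, I would set
\begin{equation*}
\mathcal{G}(r)\,=\,\log\!\left(\frac{\Vol(B^{N}_{r})}{\Vol(B^{w}_{r})}\right),
\end{equation*}
check that $\mathcal{G}(0^{+})=0$ using $w(0)=0$, $w'(0)=1$ and the standard small-ball Taylor expansion of the volume of a geodesic ball, and then differentiate to obtain
\begin{equation*}
\mathcal{G}'(r)\,=\,\frac{\Vol(\partial B^{N}_{r})}{\Vol(B^{N}_{r})}\,-\,\frac{\Vol(\partial B^{w}_{r})}{\Vol(B^{w}_{r})}.
\end{equation*}
The intrinsic isoperimetric inequality (\ref{eqIntrinIsop}) from Corollary \ref{corIntrinIsop} then directly gives $\mathcal{G}'(r)\leq 0$, so $\mathcal{G}(r)\leq\mathcal{G}(0)=0$, which is exactly the claimed volume comparison.

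The only step requiring some care is the boundary behaviour at $r=0$, i.e.\ showing $\lim_{r\to 0^{+}}\mathcal{G}(r)=0$. This is where the construction of the model space enters: because $w(0)=0$ and $w'(0)=1$, the small-$r$ expansion of $\Vol(B^{w}_{r})$ agrees with that of $\Vol(B^{N}_{r})$ to leading order ($V_{0}r^{n}/n$ on both sides), so the logarithm of the ratio tends to $0$. No other obstacle is anticipated since the rest of the argument is a verbatim simplification of the extrinsic proof of Corollary \ref{corVolBound}, with the co-area inequality replaced by the identity $\frac{d}{dr}\Vol(B^{N}_{r})=\Vol(\partial B^{N}_{r})$.
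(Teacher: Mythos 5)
Your proposal is correct and follows essentially the same route as the paper: the paper likewise defines $\mathcal{G}(r)=\log\bigl(\Vol(B^{N}_{r})/\Vol(B^{w}_{r})\bigr)$, uses the intrinsic co-area identity $\frac{d}{dr}\Vol(B^{N}_{r})=\Vol(\partial B^{N}_{r})$ (the intrinsic version of Proposition \ref{propCoArea}, since $g\equiv 1$), and concludes $\mathcal{G}'(r)\leq 0$ from the intrinsic isoperimetric inequality (\ref{eqIntrinIsop}). Your extra care about the limit $\mathcal{G}(0^{+})=0$ via $w(0)=0$, $w'(0)=1$ matches the handling implicit in Corollary \ref{corVolBound}, so nothing is missing.
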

\begin{proof}
We follow the lines of the proof of Corollary \ref{corVolBound},
but now letting $\mathcal{G}(r)$ denote the following function:
\begin{equation}
\mathcal{G}(r)\, = \, \log
\left(\frac{\Vol(D_{r})}{\Vol(B_{r}^{w})}
\right) \quad .
\end{equation}
 Taking into account that $\Vert \nabla^P r\Vert =\Vert \nabla^N r\Vert =1=g$,
 the co-area formula gives the following known equality
 (the intrinsic version of Proposition \ref{propCoArea})
 \begin{equation}
\frac{\partial}{\partial r}\Vol(B^N_{r}) \,
= \, \Vol(\partial B^N_{r}) \quad {\textrm{for
all}}\quad  r \in [\,0, R\,]\,\, .
\end{equation}
so the isoperimetric inequality \ref{eqIntrinIsop} gives
\begin{equation}
\mathcal{G}'(r) \leq 0 \quad ,
\end{equation}
and hence the volume comparison follows.
\end{proof}

The intrinsic version of Corollary \ref{corGradientIntBound}
gives likewise the following well-known inequality for the volume of extrinsic spheres, see again \cite{Sa}:
\begin{corollary}
Let $N^n$ denote a complete Riemannian manifold with a pole $p$.
Suppose that the $p$-radial  sectional curvatures of $N^n$ are
bounded from below by the $p_w$-radial sectional curvatures of
a $w$-model space $M^n_w$ for all $r >0$. Then
\begin{equation}
\Vol(\partial D_{r}) \leq\,w^{n-1}(r)=\Vol(\partial B^w_r)
\,\,\,\textrm{for every} \,\,\, r \in [\,0, R\,]
\quad.
\end{equation}
\end{corollary}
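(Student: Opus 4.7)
The plan is to observe that this is the direct intrinsic specialization of Corollary \ref{corGradientIntBound}, exactly parallel to how Corollary \ref{corIntrinIsop} and Corollary \ref{corVolIneq} were obtained from their extrinsic counterparts. First I would re-enter the intrinsic dictionary established at the start of Section \ref{secIntrinsic}: since $P^m = N^n$, one has $\nabla^P r = \nabla^N r$, hence $\Vert \nabla^P r \Vert \equiv 1$, and consequently the radial tangency bound is $g(r) \equiv 1$. Likewise the mean curvature vector $H_P$ vanishes identically, so we may (and do) take $h(r) \equiv 0$. The stretching function degenerates to $s(r) = r$, and the construction of the auxiliary warping in Proposition \ref{propMonster} collapses to $W(s) = w(r)$, so that $C^{n}_{w,1,0}$ coincides with $M^n_w$ itself.

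Next I would simply invoke Corollary \ref{corGradientIntBound} for the comparison constellation $\{N^n, N^n, M^n_w\}$ on $[0, R]$. That inequality reads
\begin{equation*}
\int_{\partial D_r} \Vert \nabla^P r \Vert \, d\nu \, \leq \, g(r)\, W^{n-1}(s(r)).
\end{equation*}
With the identifications just recorded, the left-hand side becomes $\int_{\partial B^N_r} 1 \, d\nu = \Vol(\partial B^N_r) = \Vol(\partial D_r)$, while the right-hand side reduces to $1 \cdot w^{n-1}(r) = \Vol(\partial B^w_r)$. This yields precisely the claimed inequality.

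The only thing that needs verification is that the hypotheses of Corollary \ref{corGradientIntBound} really are satisfied in the intrinsic case, i.e.\ that $\{N^n, N^n, M^n_w\}$ is a bona fide isoperimetric comparison constellation and that the balance condition holds. The radial curvature hypothesis (a) of Definition \ref{defConstellatNew} is exactly what we are assuming; (b) and (c) hold trivially with $h = 0$ and $g = 1$. For the $w$-balance condition, the argument is the same as the one already made in the proof of Corollary \ref{corIntrinIsop}: because $g \equiv 1$ and $h \equiv 0$, the auxiliary operator $\mathcal{L}$ collapses to the radial model Laplacian and, as noted there, the inequality (\ref{eqYellow}) needed in the proof of Theorem \ref{thmIsopGeneral} becomes vacuous, so the balance hypothesis is not actually used in this degenerate case. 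This is the only subtle point; once it is recorded, the result follows as a one-line specialization. I expect no genuine obstacle beyond bookkeeping of these substitutions.
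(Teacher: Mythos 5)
Your proposal is correct and matches the paper's own (essentially one-line) argument: the paper obtains this corollary precisely as the intrinsic specialization of Corollary \ref{corGradientIntBound}, using the dictionary $g\equiv 1$, $h\equiv 0$, $s(r)=r$, $W=w$ from the start of Section \ref{secIntrinsic}, with the balance-condition issue disposed of exactly as in the proof of Corollary \ref{corIntrinIsop}. No gaps beyond the bookkeeping you already record.
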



\section{Mean Exit Time Bounds and \\  a Non-explosion Criterion}
\label{secExitTime}

The solution $E$ to the Dirichlet--Poisson problem
\begin{equation}   \label{eqExitDirichletPoisson}
\begin{aligned}
\Delta E &= -1\,\,\,\,\text{on}\,\,\,\, D_{R}\\
E_{|_{\partial D_{R}}} &=0
\end{aligned}
\end{equation}
is, as already alluded to in the proof of Theorem
\ref{thmIsopGeneral}, equation
(\ref{eqMeanExit}), the mean exit time for
Brownian diffusion in $D_{R}$.

A. Gray and M. Pinsky proved in \cite{GraP} a
nice result, which even in the following
truncated version shows in effect, that Brownian
diffusion is fast in negative curvature and slow
in positive curvature, even on the level of
scalar curvature:

\begin{theorem}[Gray and Pinsky]  \label{thmGraP}
Let $B_{r}^{m}(p)$ denote an intrinsic geodesic
ball of small radius $r$ and center $p$ in a
Riemannian manifold $(M^{m}, g)$ which has scalar
curvature $\tau(p)$ at the center point $p$. Then
the mean exit time for Brownian particles
starting at $p$ is
\begin{equation}
E_{r}(p) \,= \, \frac{r^{2}}{2m} +
\frac{\tau(p)\,r^{4}}{12m^{2}(m+2)} +
r^{6}\,\varepsilon(r) \quad ,
\end{equation}
where $\varepsilon(r) \to 0$ when $r \to 0\,$.
\end{theorem}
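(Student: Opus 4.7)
The plan is to work in Riemann normal coordinates $(x^1,\dots,x^m)$ centered at $p$, in which the geodesic ball $B^m_r(p)$ coincides with the Euclidean ball $\{|x|\le r\}$ and the Dirichlet--Poisson problem (\ref{eqExitDirichletPoisson}) becomes a perturbation of its flat Euclidean counterpart. The strategy is to expand the Laplacian in powers of $|x|$ and to solve the resulting equation iteratively, using the explicit Euclidean solution as the base term.

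First I would invoke the standard normal-coordinate expansions of the metric around $p$:
$$g_{ij}(x)=\delta_{ij}-\tfrac{1}{3}R_{ikjl}(p)x^kx^l+O(|x|^3),\qquad \sqrt{\det g(x)}=1-\tfrac{1}{6}R_{kl}(p)x^kx^l+O(|x|^3),$$
with the matching expansion for $g^{ij}$. Substituting these into $\Delta f=(1/\sqrt{\det g})\partial_i(\sqrt{\det g}\,g^{ij}\partial_j f)$ and collecting terms yields
$$\Delta=\Delta_0+\mathcal{L}_1+\mathcal{R},$$
where $\Delta_0$ is the Euclidean Laplacian, $\mathcal{L}_1$ is a second-order operator whose coefficients are homogeneous quadratic polynomials in $x$ with constant coefficients built from $R_{ijkl}(p)$ and its traces, and $\mathcal{R}$ collects terms of cubic or higher order in $|x|$.

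Next I would write $E=u_0+u_1+u_2$ and solve successively. The base term $u_0(x)=(r^2-|x|^2)/(2m)$ solves $\Delta_0 u_0=-1$ with $u_0|_{|x|=r}=0$, and already contributes the leading term $u_0(0)=r^2/(2m)$. The first correction $u_1$ is defined by the flat Dirichlet problem
$$\Delta_0 u_1=-\mathcal{L}_1 u_0\quad\text{on}\quad\{|x|<r\},\qquad u_1\big|_{|x|=r}=0.$$
Since $\partial_j u_0=-x^j/m$ and $\partial_i\partial_j u_0=-\delta_{ij}/m$, the right-hand side $-\mathcal{L}_1 u_0$ is a quadratic polynomial in $x$ with constant coefficients contracted from $R_{ijkl}(p)$. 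Polynomial-in gives polynomial-out on a Euclidean ball, and the value at the center is
$$u_1(0)=\int_{|x|<r}G_0(0,x)(-\mathcal{L}_1 u_0)(x)\,dx,$$
where $G_0$ is the explicit radial Dirichlet Green's function of $\Delta_0$. After using the spherical average $\int_{|x|=\rho}x^kx^l\,d\sigma=(\operatorname{Vol}(S^{m-1})/m)\,\rho^{m+1}\delta^{kl}$ to integrate out the non-scalar directions, the only curvature contraction that survives is $\delta^{ik}\delta^{jl}R_{ikjl}(p)=\tau(p)$. Thus $u_1(0)=c\,\tau(p)\,r^4$ for some universal dimensional constant $c$.

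The main obstacle is the precise bookkeeping of the constants $-\tfrac{1}{3}$, $-\tfrac{1}{6}$, and $1/m$ coming respectively from the metric expansion, the volume form, and the derivatives of $u_0$, together with the correct sign conventions for $R_{ijkl}$, to verify $c=1/(12m^2(m+2))$. I would pin this down in two complementary ways: (i) a direct bookkeeping calculation in which the radial integral $\int_0^r G_0(0,\rho)\rho^{m+1}\,d\rho$ is evaluated in closed form, reducing everything to standard Beta-function identities; and (ii) a normalization check against the simply connected space forms $\mathbb{K}^m_b$, where $\tau(p)=m(m-1)b$ and $E_r(p)$ can be computed in closed form via the radial ODE for rotationally symmetric solutions on the $w$-model (Corollary~\ref{corLapComp} with $g\equiv 1$, $h\equiv 0$). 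Matching the $r^4$ coefficient there fixes $c$ unambiguously and eliminates sign ambiguities. Finally, the remainder $u_2(0)=r^6\varepsilon(r)$ with $\varepsilon(r)\to 0$ as $r\to 0$ is controlled by standard elliptic estimates: $u_2$ solves a Poisson problem on a ball of diameter $O(r)$ with right-hand side $O(|x|^3)$ times lower-order bounded quantities, so the maximum principle (or the same Green's function argument) yields the claimed remainder bound.
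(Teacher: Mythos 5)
The paper itself offers no proof of Theorem \ref{thmGraP}: it is quoted from Gray and Pinsky \cite{GraP} purely as background, so your proposal can only be judged on its own terms. Your route is the standard perturbative one and is sound in outline: in normal coordinates the geodesic ball is the coordinate ball, the expansions of $g_{ij}$ and $\sqrt{\det g}$ are the right ones, $u_0(x)=(r^2-|x|^2)/(2m)$ gives the leading term, $-\mathcal{L}_1u_0$ is indeed a quadratic form in $x$ whose spherical average leaves only the full trace, hence a multiple of $\tau(p)$, and your normalization check against the space forms is valid: computing the center exit time of the $Q_b$-model via $\int_0^r q_{Q_b}(t)\,dt$ gives $\tfrac{r^2}{2m}+\tfrac{(m-1)b\,r^4}{12m(m+2)}$, which with $\tau=m(m-1)b$ is exactly the claimed coefficient $1/(12m^2(m+2))$, so the constant and sign are pinned down correctly.

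The genuine gap is in your remainder estimate. The dominant error source is the cubic part of the normal-coordinate expansion (the $\nabla R(p)$-terms, of size $O(|x|^3)$) acting on $u_0$, whose second derivatives are $O(1)$; the maximum principle on a ball of radius $r$ then gives only $|u_2|\le C\,r^2\cdot r^3=O(r^5)$, one full order short of the stated $r^6\varepsilon(r)$. To reach the claimed order you must exploit parity: the cubic coefficients are odd homogeneous polynomials in $x$, while $G_0(0,\cdot)$ is radial and $u_0$ is even, so the would-be $r^5$-contribution to the value at the center vanishes upon integration over spheres, and the first surviving correction beyond $r^4$ occurs at order $r^6$ (this is also why the full Gray--Pinsky expansion contains only even powers). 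Equivalently, carry the iteration one more step, keeping the cubic terms explicitly and showing their contribution at the origin is zero. As written, your scheme proves $E_r(p)=\tfrac{r^2}{2m}+\tfrac{\tau(p)r^4}{12m^2(m+2)}+O(r^5)$, which is strictly weaker than the statement. A final caution: in \cite{GraP} the remainder is $O(r^6)$ with a generically nonzero $r^6$-coefficient, so the paper's ``$\varepsilon(r)\to0$'' should be read as boundedness of the $r^6$-term rather than $o(r^6)$; if you insist on the literal $o(r^6)$ you would be trying to prove something false in general.
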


In view of this reference to the work by Gray and
Pinsky \cite{GraP} we mention here the following
consequence of the proof of Theorem
\ref{thmIsopGeneral}, equations
(\ref{eqMeanExitComp}) and
(\ref{eqMeanExitMonster}).
\begin{theorem} \label{thmExitTime}
Let $\{ N^{n}, P^{m}, C_{w,g,h}^{m} \}$ denote an
isoperimetric com\-pa\-ri\-son constellation which is $w-$balanced from below on
$[\,0, R\,]\,$. Then for all $x\, \in \, D_{R}\, \subset P^{m}$, we have:
\begin{equation} \label{eqExitTime}
E(x) \, \geq \, E_{W}(s(r(x)))\, = \, \int_{s(r(x))}^{s(R)}\,q_{W}(t)
\, dt \quad ,
\end{equation}
where $E_{W}(s)$ is the mean exit time function for Brownian motion in the disc of radius $R$
centered at the pole $p_{W}$ in the model space $C_{w,g,h}^{m}$. The function $q_{W}(s)$ is the isoperimetric quotient
function and $s(r)$ is the stretching function of
the $W-$model comparison space.
\end{theorem}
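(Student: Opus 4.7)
The theorem essentially falls out of the proof of Theorem \ref{thmIsopGeneral}; what remains is to rename the auxiliary function $\psi(r)$ as the model mean exit time. The plan has two short steps.

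\emph{Step 1: transport the comparison.} In the proof of Theorem \ref{thmIsopGeneral}, the radial function $\psi(r)$ was defined as the unique smooth solution on $[0,R]$ of $\LL\psi = -1$ with $\psi(R)=0$, and transplanting it through $r$ to $D_R$ gave, via Corollary \ref{corLapComp} and the key sign inequality of Lemma \ref{lemYellow} (which is precisely where the $w$-balance from below enters decisively), the Laplace comparison
\begin{equation*}
\Delta^P \psi(r(x)) \geq \LL\psi(r(x)) = -1 = \Delta^P E(x) \qquad \text{on } D_R,
\end{equation*}
cf.\ (\ref{eqMeanExit}). Since $\psi(r(x))$ and $E(x)$ both vanish on $\partial D_R$, the maximum principle applied to the subharmonic difference $E - \psi\circ r$ yields $E(x) \geq \psi(r(x))$ on $D_R$, exactly as in (\ref{eqMeanExitComp}). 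No reapplication of the balance condition beyond what Lemma \ref{lemYellow} already gives is needed.

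\emph{Step 2: identify $\psi$ with the model mean exit time.} The explicit formula (\ref{eqMeanExitMonster}) reads
\begin{equation*}
\psi(r) \, = \, \int_{s(r)}^{s(R)} q_W(t)\, dt.
\end{equation*}
On the other hand, since $C_{w,g,h}^{m}$ is a genuine $W$-model with pole $p_W$ (by Proposition \ref{propMonster} and the normalization (\ref{eqTR}) ensuring $W(0)=0$ and $W'(0)=1$), the mean exit time $E_W(s)$ from the pole-centered disc of radius $s(R)$ in $C_{w,g,h}^{m}$ is the unique radial solution of
\begin{equation*}
E_W''(s) + (m-1)\,\eta_W(s)\,E_W'(s) = -1, \quad E_W(s(R))=0, \quad E_W'(0)=0,
\end{equation*}
whose closed form is, by direct integration, $E_W(s) = \int_s^{s(R)} q_W(t)\, dt$. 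Comparing the two expressions gives $\psi(r) = E_W(s(r))$, and substituting into the inequality of Step~1 yields (\ref{eqExitTime}).

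The main obstacle I anticipate is purely bookkeeping: checking that the formal solution (\ref{eqMeanExitMonster}) of the transplanted Dirichlet--Poisson problem really coincides with the intrinsic radial mean exit time on the model space $C_{w,g,h}^{m}$. This coincidence is immediate from the closed-form integration above once one recalls that $W$ has been tailor-made (through $\Lambda$ and the stretching function $s$) so that $q_W(s)$ matches the integrand appearing in the construction of $\psi$. No new curvature comparison, estimate, or invocation of the $w$-balance condition is required beyond what was already established in Section \ref{secMainIsopRes}.
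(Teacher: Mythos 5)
Your proposal is correct and follows essentially the same route as the paper, which derives Theorem \ref{thmExitTime} directly from equations (\ref{eqMeanExitComp}) and (\ref{eqMeanExitMonster}) in the proof of Theorem \ref{thmIsopGeneral}, i.e.\ the comparison $E(x)\geq\psi(r(x))$ (resting on Lemma \ref{lemYellow} and hence on the $w$-balance condition) together with the identity $\psi(r)=\int_{s(r)}^{s(R)}q_{W}(t)\,dt$. Your Step 2, integrating the radial ODE $\bigl(W^{m-1}E_{W}'\bigr)'=-W^{m-1}$ to identify this expression with the model mean exit time $E_{W}(s(r))$, is exactly the identification the paper leaves implicit, and it is carried out correctly.
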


Moreover, as direct applications of the volume
comparison in Corollary \ref{corVolBound}, using the obvious fact that
the geodesic balls in $P$, $B^P_R$ are subsets of the extrinsic balls $D_R$ of the same radius, we get from the
general non-explosion condition and parabolicity condition in \cite[Theorem 9.1\,]{Gri}:

\begin{proposition}\label{propLogBallCond}
Let $\{ N^{n}, P^{m}, C_{w,g,h}^{m} \}$ denote an
isoperimetric comparison constellation which is $w-$balanced from below on $[\,0,
\infty\,[\,$. Suppose that
\begin{equation}\label{eqLogBallCond}
\int^{\infty}\frac{r(s)\,g(r(s))}{\log(\Vol(B^{W}_{s}))}\,ds\,
= \, \infty \quad.
\end{equation}
Then $P^{m}$ is stochastically complete, i.e. the
Brownian motion in the submanifold is
non-explosive.
\end{proposition}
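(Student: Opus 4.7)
The plan is to reduce the statement directly to Grigor'yan's sufficient condition for stochastic completeness, which asserts that a complete Riemannian manifold $M$ is non-explosive provided there exists a point $p \in M$ such that
\begin{equation*}
\int^{\infty}\frac{r}{\log(\Vol(B^{M}_{r}(p)))}\,dr \, = \, \infty \quad .
\end{equation*}
Applying this to $P^{m}$ with the given pole $p$, it suffices to show that
\begin{equation*}
\int^{\infty}\frac{r}{\log(\Vol(B^{P}_{r}))}\,dr \, = \, \infty \quad ,
\end{equation*}
and for this we want to bound $\log(\Vol(B^{P}_{r}))$ from above by $\log(\Vol(B^{W}_{s(r)}))$ and then perform a change of variables to recognize the hypothesis (\ref{eqLogBallCond}).

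First I would observe that the intrinsic geodesic ball $B^{P}_{r}$ sits inside the extrinsic ball $D_{r}$: if $x \in B^{P}_{r}$, then the intrinsic distance in $P$ from $p$ to $x$ is less than $r$, and since any curve in $P$ from $p$ to $x$ is also a curve in the ambient $N$, we have $\dist_{N}(p,x) \leq \dist_{P}(p,x) < r$, so $x \in D_{r}$. Consequently $\Vol(B^{P}_{r}) \leq \Vol(D_{r})$. Combining this with the volume comparison provided by Corollary \ref{corVolBound} (which is available because the constellation is $w$-balanced from below on all of $[\,0, \infty\,[$) gives
\begin{equation*}
\Vol(B^{P}_{r}) \, \leq \, \Vol(D_{r}) \, \leq \, \Vol(B^{W}_{s(r)}) \quad ,
\end{equation*}
and hence, for $r$ large enough that the right-hand side exceeds $1$,
\begin{equation*}
\log(\Vol(B^{P}_{r})) \, \leq \, \log(\Vol(B^{W}_{s(r)})) \quad .
\end{equation*}

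Next I would change variables in the Grigor'yan integral via the stretching function $s = s(r)$ from section \ref{secIsopCompSpace}, whose inverse satisfies $r'(s) = g(r(s)) > 0$. Substituting $r = r(s)$, $dr = g(r(s))\,ds$, and using the previous estimate yields
\begin{equation*}
\int^{\infty}\frac{r}{\log(\Vol(B^{P}_{r}))}\,dr \, \geq \, \int^{\infty}\frac{r}{\log(\Vol(B^{W}_{s(r)}))}\,dr \, = \, \int^{\infty}\frac{r(s)\,g(r(s))}{\log(\Vol(B^{W}_{s}))}\,ds \quad ,
\end{equation*}
which is infinite by assumption (\ref{eqLogBallCond}). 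Grigor'yan's criterion then delivers stochastic completeness of $P^{m}$.

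There is no real analytic obstacle here; the proposition is essentially a translation of the extrinsic volume-comparison Corollary \ref{corVolBound} across the stretching reparametrization into the standard intrinsic non-explosion test. The only mildly subtle point to handle carefully is that the Grigor'yan test applies to the intrinsic volume of $P$, not to $\Vol(D_{r})$, and this is precisely why one needs the inclusion $B^{P}_{r} \subset D_{r}$ as the first step. Once that inclusion is in hand, the volume comparison and the substitution $r \mapsto s(r)$ combine cleanly to match the hypothesis.
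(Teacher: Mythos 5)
Your proposal is correct and follows exactly the route the paper intends: the inclusion $B^{P}_{r}\subset D_{r}$, the volume comparison $\Vol(D_{r})\leq\Vol(B^{W}_{s(r)})$ from Corollary \ref{corVolBound}, and the substitution $dr=g(r(s))\,ds$ to reduce hypothesis (\ref{eqLogBallCond}) to Grigor'yan's log-volume non-explosion test \cite[Theorem 9.1]{Gri}. Your explicit handling of the intrinsic-versus-extrinsic ball inclusion and of the change of variables simply fills in the details the paper leaves as a one-line remark.
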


From the same volume comparison result we get
\begin{proposition} \label{propBallCond}
Let $\{ N^{n}, P^{m}, C_{w,g,h}^{m} \}$ denote an
isoperimetric comparison constellation  which is $w-$balanced from below on $[\,0,
\infty\,[\,$. Suppose that
\begin{equation}\label{eqBallCond}
\int^{\infty}\frac{r(s)\,g(r(s))}{\Vol(B^{W}_{s})}\,ds\,
= \, \infty \quad.
\end{equation}
Then $P^{m}$ is parabolic.
\end{proposition}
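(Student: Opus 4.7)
The plan is to reduce the hypothesis (\ref{eqBallCond}) to the classical Grigor'yan--Karp--Lyons--Sullivan--Varopoulos volume-growth criterion for parabolicity recalled in Subsection \ref{subsecGlobal}, by combining the volume comparison in Corollary \ref{corVolBound} with the trivial inclusion of intrinsic balls in extrinsic balls of the same radius. The statement preceding the proposition already indicates that this result is obtained ``as a direct application'' of the volume comparison together with Grigor'yan's criterion (Theorem 9.1 in \cite{Gri}), so the argument should be essentially a change of variables.

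First I would change the variable of integration in (\ref{eqBallCond}) from $s$ to $r=r(s)$. Since $s(r)=\int_0^r dt/g(t)$, one has $dr = g(r(s))\,ds$, and the hypothesis becomes
\begin{equation*}
\int^{\infty}\frac{r}{\Vol(B^{W}_{s(r)})}\,dr \, = \, \infty \quad .
\end{equation*}
Next, Corollary \ref{corVolBound} (which applies precisely because the comparison constellation is assumed $w$-balanced from below) gives $\Vol(D_r) \leq \Vol(B^{W}_{s(r)})$ for every $r\ge 0$, and consequently
\begin{equation*}
\int^{\infty}\frac{r}{\Vol(D_r)}\,dr \, \geq \, \int^{\infty}\frac{r}{\Vol(B^{W}_{s(r)})}\,dr \, = \, \infty \quad .
\end{equation*}

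Finally, since the intrinsic distance on $P$ dominates the extrinsic distance inherited from $N$, the intrinsic geodesic ball $B^{P}_r(p)$ of radius $r$ about the pole is contained in the extrinsic ball $D_r$, whence $\Vol(B^{P}_r(p))\le \Vol(D_r)$ and therefore
\begin{equation*}
\int^{\infty}\frac{r}{\Vol(B^{P}_r(p))}\,dr \, = \, \infty \quad .
\end{equation*}
The Grigor'yan--Karp--Lyons--Sullivan--Varopoulos criterion quoted in Subsection \ref{subsecGlobal} then forces $P^m$ to be parabolic.

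There is no serious obstacle: the whole argument is a change of variables, a volume bound, and an appeal to a known theorem. The only conceptual point to verify is that the $w$-balance assumption in the present proposition is the one needed for Corollary \ref{corVolBound} (which in turn rests on the isoperimetric comparison Theorem \ref{thmIsopGeneral}); this is immediate from the hypotheses. The companion non-explosion statement (Proposition \ref{propLogBallCond}) is proved along exactly the same lines, invoking instead the logarithmic-volume stochastic completeness criterion of \cite[Theorem 9.1]{Gri}.
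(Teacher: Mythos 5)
Your argument is correct and is exactly the proof the paper intends (the paper only sketches it): the change of variables $dr=g(r(s))\,ds$, the volume comparison of Corollary \ref{corVolBound}, the inclusion $B^{P}_{r}\subseteq D_{r}$, and the Grigor'yan-type volume criterion applied to the complete manifold $P^{m}$. No gaps; the $w$-balance hypothesis is indeed what makes Corollary \ref{corVolBound} available, just as you note.
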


We also have the following extrinsic version of
Ichihara's \cite[Theorem 2.1\,]{Ich1}.

\begin{theorem} \label{thmSphereCondGen}
Let $\{ N^{n}, P^{m}, C_{w,g,h}^{m} \}$ denote an
isoperimetric comparison constellation  which is $w-$balanced from below on $[\,0,
\infty \, [\,$. Suppose that
\begin{equation}\label{eqSphereCondGen}
\int^{\infty}\frac{1}{\Vol(\partial
B^{W}_{s})}\,ds\, = \, \infty \quad.
\end{equation}
Then $P^{m}$ is parabolic.
\end{theorem}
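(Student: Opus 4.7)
The plan is to apply the Kelvin--Nevanlinna--Royden criterion (b) stated in the introduction: $P^m$ is parabolic precisely when every precompact condenser has vanishing capacity relative to $P^m$. I will fix $r_0 > 0$ and estimate the capacity of the extrinsic annular condenser
\begin{equation*}
\operatorname{Cap}(D_{r_0}, D_R) \, = \, \inf_u \int_{D_R \setminus D_{r_0}} \Vert \nabla^P u \Vert^2 \, d\mu,
\end{equation*}
where $u$ runs over locally Lipschitz functions with $u = 1$ on $\overline{D_{r_0}}$ and $u = 0$ on $\partial D_R$. The strategy is to bound this capacity above by the corresponding annular capacity in the model space $C^{m}_{w,g,h}$, which is computable explicitly as a one-dimensional integral, and then let $R \to \infty$.

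For the test function I would transplant the radial harmonic potential from the model annulus. Since $C^m_{w,g,h}$ is a $W$-model, the unique radial harmonic function on $B^W_{s(R)} \setminus B^W_{s(r_0)}$ with boundary values $1$ on the inner sphere and $0$ on the outer sphere is
\begin{equation*}
\Psi_W(s) \, = \, \frac{\int_{s}^{s(R)} \frac{d\tau}{\Vol(\partial B^W_\tau)}}{\int_{s(r_0)}^{s(R)} \frac{d\tau}{\Vol(\partial B^W_\tau)}},
\end{equation*}
with associated capacity $\operatorname{Cap}_W(B^W_{s(r_0)}, B^W_{s(R)}) = \bigl(\int_{s(r_0)}^{s(R)} \frac{d\tau}{\Vol(\partial B^W_\tau)}\bigr)^{-1}$. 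Setting $\Psi(x) := \Psi_W(s(r(x)))$ yields an admissible competitor for $\operatorname{Cap}(D_{r_0}, D_R)$, because the stretching $s$ is strictly increasing on $[\,0, \infty\,[\,$ and the boundary values match.

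The heart of the argument is a Dirichlet-energy comparison. Since $s'(r) = 1/g(r)$, one computes $\Vert \nabla^P \Psi \Vert^2 = g(r)^{-2}\,\Psi_W'(s(r))^2\,\Vert \nabla^P r \Vert^2$, and the co-area formula gives
\begin{equation*}
\int_{D_R \setminus D_{r_0}} \Vert \nabla^P \Psi \Vert^2 \, d\mu \, = \, \int_{r_0}^{R} \frac{\Psi_W'(s(r))^2}{g^2(r)} \left(\int_{\partial D_r} \Vert \nabla^P r \Vert \, d\sigma_r\right) dr.
\end{equation*}
At this point I would invoke Corollary \ref{corGradientIntBound} to bound $\int_{\partial D_r} \Vert \nabla^P r \Vert \, d\sigma_r \leq g(r)\, W^{m-1}(s(r))$; this is the step where the $w$-balance hypothesis on $C^m_{w,g,h}$ enters, through Theorem \ref{thmIsopGeneral} and Corollary \ref{corVolBound}. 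After the substitution $\sigma = s(r)$ with $d\sigma = dr/g(r)$, the right-hand side collapses to $V_0 \int_{s(r_0)}^{s(R)} \Psi_W'(\sigma)^2 W^{m-1}(\sigma)\, d\sigma$, which is precisely the model Dirichlet energy of $\Psi_W$, i.e.\ $\operatorname{Cap}_W(B^W_{s(r_0)}, B^W_{s(R)})$.

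Assembling the pieces, $\operatorname{Cap}_P(D_{r_0}, D_R) \leq \bigl(\int_{s(r_0)}^{s(R)} \frac{d\sigma}{\Vol(\partial B^W_\sigma)}\bigr)^{-1}$. Letting $R \to \infty$, hypothesis (\ref{eqSphereCondGen}) forces the right-hand side to $0$, so $\operatorname{Cap}_P(\overline{D_{r_0}}, P) = 0$ and parabolicity of $P^m$ follows from criterion (b). The main obstacle I foresee is keeping the Dirichlet-energy inequality sharp: one must ensure that the factor $g(r)$ produced by $\Vert \nabla^P \Psi \Vert^2$ cancels exactly against the factor $g(r)$ delivered by Corollary \ref{corGradientIntBound}, so that the model capacity is recovered rather than a strictly weaker bound. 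Modulo this bookkeeping around the co-area formula and the change of variables, the statement follows cleanly from the ingredients already assembled in the paper.
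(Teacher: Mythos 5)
Your proposal is correct, but it reaches the capacity estimate by a genuinely different route than the paper. The paper proves Theorem \ref{thmSphereCondGen} through Theorem \ref{thmCapComp}: it solves the one--dimensional problem $\LL\,\xi=0$ on $[\rho,R]$ explicitly via $\Lambda$, transplants $\xi\circ r$ into $P$, uses the Laplace comparison (\ref{eqLap1}) together with the balance consequence (\ref{eqEtaVSh}) to see that $\xi\circ r$ is superharmonic, and then combines the maximum principle with a flux computation through $\partial D_{\rho}$ -- where Corollary \ref{corGradientIntBound} enters -- to obtain the bound (\ref{eqCapacGeneral}) on $\C(A_{\rho,R})$; vanishing capacity and KNR criterion (b) then finish exactly as you say. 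You instead use the variational (Dirichlet--energy) characterization of capacity, take the transplanted model equilibrium potential as a test function, and control its energy with the co-area formula, with Corollary \ref{corGradientIntBound} again as the sole geometric input. Both arguments therefore funnel the $w$-balance hypothesis through Corollary \ref{corGradientIntBound}, but your route dispenses with the maximum principle and the superharmonicity check (so it does not invoke (\ref{eqEtaVSh}) separately), at the price of the co-area bookkeeping you flag: note that the tangency bound $\Vert\nabla^{P}r\Vert\geq g>0$ is what makes the level sets $\partial D_{r}$ regular and the division by $\Vert\nabla^{P}r\Vert$ legitimate, and that the factor $V_{0}$ (the normalization $\Vol(\partial B^{W}_{s})=V_{0}W^{m-1}(s)$ versus the bound $g(r)\,W^{m-1}(s(r))$ of Corollary \ref{corGradientIntBound}) only perturbs the estimate by a multiplicative constant, which is irrelevant for forcing $\operatorname{Cap}(\overline{D_{r_{0}}},P)=0$ under (\ref{eqSphereCondGen}). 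In the end you recover essentially the same bound as (\ref{eqCapacGeneral}), so the two proofs are quantitatively equivalent; yours is the more elementary (purely variational, no PDE comparison), while the paper's additionally yields the pointwise comparison $v(x)\leq\xi(r(x))$ for the annular potentials, which is of independent use.
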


\begin{remark} \label{remToFurther}
We shall prove this statement in the next section via a capacity comparison technique, which is
very similar to the isoperimetric comparison developed in the previous sections. One first pertinent remark
is that Theorem \ref{thmSphereCondGen} is {\em{not}}
a consequence of the previous proposition \ref{propBallCond} as pointed out by
A. Grigor{\cprime}yan in \cite[p. 180--181]{Gri}. Indeed, he shows
by a nice example, that \ref{eqBallCond} is not equivalent to parabolicity
of the model space, so, although \ref{eqSphereCondGen} does imply parabolicity
of the model space, we cannot conclude parabolicity of $P^{m}$
via Proposition \ref{propBallCond}.
\end{remark}

As a corollary to Theorem \ref{thmSphereCondGen} we get Theorem \ref{thmAparab} as follows:
\begin{proof}[Proof of Theorem \ref{thmAparab}]
The strong conditions for the theorem as stated in the introduction, i.e. $h(r)\,=\,C$ and $w(r) \, = \, Q_{b}(r)$,
$b \, \leq \, 0$, still implies that the model space in the comparison constellation, $\,C_{Q_{b},g,C}^{m}\,\,$, is $Q_{b}-$balanced from below. Furthermore, since we have that $1 \, \geq \, g(r) \, > \, 0$,
the 'stretching' satisfies $s(r) \to \infty$ when $r \to \infty$.
We have then according to Proposition \ref{propMonster}, (\ref{eqLambdaNew}):
\begin{equation}
\begin{aligned}
&\int^{\infty}\frac{1}{\Vol(\partial
B^{W}_{s})}\,ds\, \\
&= \, \int^{\infty}\frac{1}{W^{m-1}(s(r))}\,\left(\frac{ds}{dr}\right)\,dr \, \\
&= \,  \int^{\infty}\frac{1}{\Lambda(r)\,g(r)}\,dr \, \\
&= \,  \int^{\infty} \left( \frac{T}{Q_{b}(r)}\,\exp\left(-\int_{r}^{\,1}\,\frac{m}{g^{2}(t)}\,\left(\eta_{Q_{b}}(t)
- C\right)\, dt\right)\right)^{-1}\,dr,
\end{aligned}
\end{equation}
where $T$ is the fixed constant defined and found in Proposition \ref{propMonster}.
In this particular case the condition (\ref{eqSphereCondGen}) therefore corresponds to (\ref{eqAparab})
in Theorem \ref{thmAparab}, which is thence proved.
\end{proof}

The intrinsic versions of these results (implying formally that
$h(r) \, = \, 0$ and $g(r) \,= \, 1$ for all $r$) are well
known and established by Ahlfors, Nevanlinna,
Karp, Varopoulos, Lyons and Sullivan, and
Grigor'yan, see \cite[Theorem 7.3 and Theorem
7.5\,]{Gri}. For warped product model space manifolds, the reciprocal
boundary-volume integral condition is also
necessary for parabolicity, but {\em{in general}} none
of the conditions are necessary.


\section{Capacity Analysis}
\label{secCapAnalysis}

Given the extrinsic balls with radii $\rho <R$, $D_{\rho}$ and $D_R$, the annulus $A_{\rho,R}$ is defined as

$$A_{\rho,R}=D_R \sim D_{\rho} $$

The
unit normal vector field on the boundary of this annulus $\partial A_{\rho,R} =
\partial D_{\rho} \cup \partial D_R$ is denoted by $\nu$ and defined by the
following normalized $P$-gradient of the distance function restricted to
$\partial D_{\rho}$ and $\partial D_R$, respectively:
$$
\nu =
 \nabla^P r(x)/ \Vert\nabla^{P} r(x) \Vert\,\,\, , \quad x \in \partial A_{\rho,R} \quad  .
$$

We apply the previously mentioned result by Greene and Wu, \cite{GreW}, with a lower radial curvature
bound again, but now we consider radial functions
with $f'(r) \, \geq \, 0$ which will change the
inequality in the Laplace comparison, equation
(\ref{eqLap1}).


\begin{theorem} \label{thmCapComp}
Let $\{ N^{n}, P^{m}, C_{w,g,h}^{m} \}$ denote an
isoperimetric comparison constellation  which is $w-$balanced from below on $[\,0,
\infty \, [\,$.  Then
 \begin{equation}\label{eqCapacGeneral}
    \begin{aligned} \C(A_{\rho, R}) \, \leq
    \,  \left(\int_{s(\rho)}^{s(R)}\,\frac{1}{W^{m-1}(t)}
\,dt \right)^{-1}  \quad .
    \end{aligned}
    \end{equation}
\end{theorem}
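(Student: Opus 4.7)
The plan is to mimic the structure of the proof of the isoperimetric inequality in Theorem \ref{thmIsopGeneral}, replacing the Dirichlet--Poisson problem for the mean exit time by the homogeneous Dirichlet problem for the capacity potential on the extrinsic annulus $A_{\rho, R}$.

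First, I would solve the radial ODE $\LL f(r) = 0$ on $[\rho, R]$ with $f(\rho) = 1$ and $f(R) = 0$, where $\LL$ is the second order operator introduced in the proof of Theorem \ref{thmIsopGeneral}. Using the explicit form of $\Lambda(r)$ from Proposition \ref{propMonster} together with the logarithmic derivative identity of Lemma \ref{lemLambdaDiffeq}, the resulting solution is precisely the transplant of the capacity potential of the model annulus, namely
\begin{equation*}
f(r) \, = \, \Psi_{W}(s(r)) \, = \, \frac{\int_{s(r)}^{s(R)} W^{-(m-1)}(t)\, dt}{\int_{s(\rho)}^{s(R)} W^{-(m-1)}(t)\, dt} \quad .
\end{equation*}

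Second, I would establish the analogue of Lemma \ref{lemYellow} for this $f$, namely $f''(r) - f'(r)\eta_w(r) \geq 0$. A short computation based on Lemma \ref{lemLambdaDiffeq} yields
\begin{equation*}
f''(r) - f'(r)\,\eta_w(r) \, = \, -f'(r)\,\frac{m\,(\eta_{w}(r) - h(r))}{g^{2}(r)} \quad ,
\end{equation*}
and this is non-negative because $f'(r) \leq 0$ and because the $w$-balance condition from below implies the positivity $\eta_w(r) - h(r) > 0$ observed in (\ref{eqEtaVSh}).

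Third, I would apply the Laplacian inequality of Corollary \ref{corLapComp} to the radially decreasing function $f \circ r$, and combine it with the tangency bound $\|\nabla^{P} r(x)\|^2 \geq g^{2}(r(x))$ and the radial mean convexity inequality $\mathcal{C}(x) \geq h(r(x))$, in the same algebraic manner that leads to (\ref{eqMeanExit}) for the mean exit time function. This produces
\begin{equation*}
\Delta^{P}(f \circ r)(x) \, \geq \, \LL f(r(x)) \, = \, 0 \quad ,
\end{equation*}
so that $f \circ r$ is subharmonic on $A_{\rho, R}$.

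Finally, denoting by $u$ the harmonic capacity potential of $A_{\rho, R}$ (with $u = 1$ on $\partial D_{\rho}$ and $u = 0$ on $\partial D_{R}$), the maximum principle applied to the subharmonic function $(f \circ r) - u$, which vanishes on $\partial A_{\rho, R}$, yields $f \circ r \leq u$ on the whole annulus. Comparing outward normal derivatives on the inner boundary $\partial D_{\rho}$ (both negative, with $u$ and $f\circ r$ coinciding there) then gives
\begin{equation*}
\C(A_{\rho, R}) \, \leq \, -f'(\rho)\int_{\partial D_{\rho}}\|\nabla^{P} r\|\, d\sigma \quad ,
\end{equation*}
and invoking Corollary \ref{corGradientIntBound} to bound the last integral by $g(\rho)\,W^{m-1}(s(\rho))$, together with the explicit value
\begin{equation*}
-f'(\rho) \, = \, \frac{1}{g(\rho)\,W^{m-1}(s(\rho))}\,\left(\int_{s(\rho)}^{s(R)}\frac{1}{W^{m-1}(t)}\, dt\right)^{-1} \quad ,
\end{equation*}
produces the claimed upper bound (\ref{eqCapacGeneral}). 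The delicate point I anticipate is step three: matching the direction of every inequality (the tangency, the radial mean convexity, and the sign of $f'$) with the direction provided by the Laplacian comparison for a radially decreasing function, and locating the precise place where the $w$-balance condition is used to secure the key nonnegativity $f'' - f'\eta_w \geq 0$.
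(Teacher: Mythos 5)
Your proposal is correct and follows essentially the same route as the paper's proof: up to the substitution $f = 1-\xi$, $u = 1-v$ (you impose the value $1$ on the inner boundary, the paper imposes it on the outer), you solve the same radial problem $\LL f = 0$ via $\Lambda$, use the same sign lemma $f''-f'\eta_{w}\geq 0$ coming from the balance consequence (\ref{eqEtaVSh}), the same branch-matched Laplacian comparison, the maximum principle with a normal-derivative comparison at $\partial D_{\rho}$, and Corollary \ref{corGradientIntBound} to reach (\ref{eqCapacGeneral}). The delicate sign bookkeeping you flagged in step three works out exactly as you describe, mirroring the paper's computation.
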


\begin{proof}
We consider again the second order differential
operator $\LL$ but now we look for the smooth
solution $\xi(r)$ to the following
Dirichlet--Laplace problem on the interval $[\,
\rho, R\,]$, $\rho > 0$:
\begin{equation}   \label{eqLaplace1}
\begin{aligned}
\LL \xi(r) &= 0\,\,\,\,\text{on}\,\,\,\, [\,\rho, R\,] \quad ,\\
\xi(\rho) \, &=\, 0 \,\,\, , \,\,\, \xi(R) \, =
\, 1 \quad .
\end{aligned}
\end{equation}
The solution is again constructed via the
function $\Lambda(r)$ defined in equation
(\ref{eqLambdaNew}):
\begin{equation}
\xi'(r) \, = \, \Xi(r) \, = \,
\frac{1}{g(r)\Lambda(r)}\,\left(\int_{\rho}^{R}\,\frac{1}{g(t)\Lambda(t)}
\,dt \right)^{-1} \quad ,
\end{equation}

Then we have
\begin{equation}
\xi(r) \,= \,
\left(\int_{\rho}^{r}\,\frac{1}{g(t)\Lambda(t)}\,dt\right)\,\left(\int_{\rho}^{R}\,\frac{1}{g(t)\Lambda(t)}
\,dt \right)^{-1} \quad .
\end{equation}

Applying the Laplace inequality (\ref{eqLap1}) on
the radial functions $\xi(r)$ - now with a
non-negative derivative - transplanted into
$P^{m}$ in $N^{n}$ now gives the following
comparison inequality, using the assumptions
stated in the theorem:
\begin{equation}
\begin{aligned}
\Delta^{P}\xi(r(x)) \, &\leq \,
\left(\xi''(r(x))- \xi'(r(x))\eta_{w}(r(x))\right)\,g^{2}(r(x)) \\
& \phantom{mm}+ m\xi'(r(x))\left( \eta_{w}(r(x))
- h(r(x))\right)\, \\ &= \, \LL \xi(r(x)) \, \\
&= \, 0 \, \\ &= \, \Delta^{P}v(x) \quad ,
\end{aligned}
\end{equation}
where $v(x)$ is the Laplace potential function
for the extrinsic annulus  $A_{\rho, R}\, = \,
D_{R} - D_{\rho}$, setting $v_{\partial D_{\rho}}\,=\,0$ and
$v_{\partial D_{R}}\,=\,1$.\\

For this inequality to hold we need that
\begin{equation}
\xi''(r)- \xi'(r)\eta_{w}(r) \, \leq \, 0 \quad .
\end{equation}

This follows from the Laplace equation itself
together with the consequence (\ref{eqEtaVSh}) of the balance condition that $h(r) \, \leq\,
\eta_{w}(r)$:
\begin{equation}
\left(\xi''(r)-
\xi'(r)\eta_{w}(r)\right)\,g^{2}(r) \, = \,
-m\xi'(r)\left( \eta_{w}(r) - h(r) \right) \,
\leq 0 \quad .
\end{equation}

The maximum principle then applies again and
gives:
\begin{equation}
v(x) \, \leq \, \xi(r(x))\,\, , \,\, \textrm{for
all} \, \, x \in A_{\rho, R} \quad .
\end{equation}

This implies in particular that at $\partial
D_{\rho}$ we have
\begin{equation}
\Xi(\rho) \, = \, |\xi'(\rho) | \, \geq \, \Vert
\nabla^{P}v(x)_{|_{\partial D_{\rho}}} \Vert
\end{equation}

Applying the divergence theorem and using
Corollary \ref{corGradientIntBound}, we get
    \begin{equation}\label{eqCapac2}
    \begin{aligned} \C(A_{\rho, R}) &= \, \int_{\partial D_{\rho}}\Vert \nabla^{P}v(x) \Vert \, d\nu \\ &\leq
\, \Xi(\rho)\int_{\partial D_{\rho}}\Vert
\nabla^{P}r \Vert \, d\nu \\ &\leq \,
\Xi(\rho)\,g(\rho)\,\Lambda(\rho)
\\ &=
    \,  \left(\int_{\rho}^{R}\,\frac{1}{g(t)\Lambda(t)}
\,dt \right)^{-1} \\
&=\, \left(\int_{s(\rho)}^{s(R)}\,\frac{1}{W^{m-1}(t)}
\,dt \right)^{-1}
 \quad ,
    \end{aligned}
    \end{equation}
    which shows the general capacity bound (\ref{eqCapacGeneral}).

\end{proof}


We then finally have the following promised consequence alluded to in the previous section:

\begin{corollary}[Theorem \ref{thmSphereCondGen}]
Under the conditions of Theorem \ref{thmCapComp}:
If furthermore
\begin{equation} \label{eqWcond}
\int^{\,\infty}\,\frac{1}{W^{m-1}(t)} \,dt \, =
\, \infty \quad ,
\end{equation}
then $P^{m}$ is parabolic.
\end{corollary}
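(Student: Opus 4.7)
The plan is to apply the Kelvin--Nevanlinna--Royden criterion (b) from the introduction: $P^{m}$ is parabolic if and only if it has vanishing capacity, where the capacity of a precompact open set $\Omega$ is defined as the limit of $\C(\Omega, \Omega_{R})$ as $\Omega_{R}$ exhausts $P^{m}$. Concretely, I would fix a radius $\rho > 0$ and consider the extrinsic ball $D_{\rho}$ as the precompact set whose capacity is to be estimated, letting the outer ball $D_{R}$ play the role of the exhausting domain. Then the capacity of $D_{\rho}$ in $P^{m}$ equals
\begin{equation*}
\C(D_{\rho}) \, = \, \lim_{R \to \infty}\,\C(A_{\rho, R}) \quad .
\end{equation*}

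The main step is to feed the upper bound from Theorem \ref{thmCapComp} into this limit. Theorem \ref{thmCapComp} gives
\begin{equation*}
\C(A_{\rho, R}) \, \leq \, \left(\int_{s(\rho)}^{s(R)}\,\frac{1}{W^{m-1}(t)} \, dt\right)^{-1} \quad ,
\end{equation*}
so the right-hand side tends to $0$ as $R \to \infty$ provided that $s(R) \to \infty$ and that the integral $\int^{\infty} 1/W^{m-1}(t)\, dt$ diverges. The divergence is exactly the hypothesis (\ref{eqWcond}), so I only need the unboundedness of the stretching function.

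For this, I would observe that $g(r) \leq 1$ throughout, since $g(r) \leq \mathcal{T}(x) = \Vert \nabla^{P}r(x) \Vert \leq \Vert \nabla^{N}r(x) \Vert = 1$ by (\ref{eq2.1}). Consequently $s(r) = \int_{0}^{r} 1/g(t)\,dt \geq r$, which gives $s(R) \to \infty$ as $R \to \infty$. Combining this with (\ref{eqWcond}) yields $\C(D_{\rho}) = 0$ for every $\rho > 0$. Since $P^{m}$ is exhausted by the nested family $\{D_{\rho}\}_{\rho > 0}$, every precompact open subset of $P^{m}$ is contained in some $D_{\rho}$ and therefore has vanishing capacity. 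By the Kelvin--Nevanlinna--Royden criterion, $P^{m}$ is parabolic.

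I expect no genuine obstacle here: the heavy lifting was already done in Theorem \ref{thmCapComp}, and the corollary is really just the observation that the capacity bound collapses to $0$ in the limit. The only minor subtlety is verifying that $s(R)\to\infty$, which (as noted above) follows immediately from the universal upper bound $g \leq 1$ on the tangency function, so that the integrand $1/g(t)$ in $s(r)$ is bounded below by $1$.
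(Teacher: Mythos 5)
Your proposal is correct and follows essentially the same route as the paper: apply the capacity bound of Theorem \ref{thmCapComp}, let $R \to \infty$ so that the bound collapses to $0$ under hypothesis (\ref{eqWcond}), and invoke the Kelvin--Nevanlinna--Royden criterion (b). Your explicit verification that $s(R) \to \infty$ via $g \leq 1$ is a detail the paper only records elsewhere (in the proof of Theorem \ref{thmAparab}), but it is the same argument, just spelled out more carefully.
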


\begin{proof}
Referring to the above capacity inequality (\ref{eqCapacGeneral}),
we see that the capacity is forced to $0$ in the
limit $s \, \to \, \infty$ because of the condition (\ref{eqWcond}).
According to the Kelvin--Nevanlinna--Royden criterion (b) of the introduction,
see subsection \ref{subsecGlobal},
this corresponds to parabolicity of the
submanifold.
\end{proof}



\bibliographystyle{acm}


\def\cprime{$'$}

\end{document}